\title{The KK-theory of Fundamental C*-algebras}
\author{Pierre Fima and Emmanuel Germain}
\thanks{P.F. is partially supported by ANR grants OSQPI and NEUMANN.  E.G thanks CMI, Chennai for its support when part of this research was underway.}
\theoremstyle{plain}
\newtheorem{theorem}{Theorem}[section]
\newtheorem{proposition}[theorem]{Proposition}
\newtheorem{corollary}[theorem]{Corollary}
\newtheorem{lemma}[theorem]{Lemma}
\theoremstyle{definition}
\newtheorem{definition}[theorem]{Definition}
\newtheorem{example}[theorem]{Example}
\theoremstyle{remark}
\newtheorem{remark}[theorem]{Remark}
\DeclareMathOperator{\E}{E}
\DeclareMathOperator{\V}{V}
\newcommand{\EE}{\mathbb{E}}
\newcommand{\F}{\mathcal{F}}
\newcommand{\Gr}{\mathcal{G}}
\newcommand{\HH}{\mathcal{H}}
\newcommand{\LL}{\mathcal{L}}
\newcommand{\PP}{\mathcal{P}}
\newcommand{\T}{\mathcal{T}}
\newcommand{\Z}{\mathbb{Z}}
\newcommand{\rev}{\overline}
\newcommand{\ot}{\otimes}
\newcommand{\id}{\text{id}}
\begin{document}

\begin{abstract}
Given a graph of C*-algebras as defined in \cite{FF13}, we prove a long exact sequence in KK-theory similar to the one obtained by 
Pimsner in \cite{Pi86} for both the maximal and the vertex-reduced fundamental C*-algebras of the graph in 
the presence of possibly non GNS-faithful conditional expectations. We deduce from it the KK-equivalence between the full fundamental C*-algebra and the vertex-reduced fundamental C*-algebra even for non GNS-faithful conditional expectations. Our results unify, simplify and generalize all the previous results obtained before by Cuntz, Pimsner, Germain and Thomsen. It also generalizes the previous results of the authors on amalgamated free products.
\end{abstract}

\maketitle

\section{Introduction}

\noindent In 1986  the description of the $KK$-theory for some groups C*-algebras culminated in the computation by M. Pimsner of full and reduced crossed products by groups acting on trees \cite{Pi86} (or by the fundamental group of a graph of groups in Serre's terminology).  To go over the group situation has been difficult  and it relied heavily  on various generalizations of Voiculescu absorption theorem (see \cite{Th03} for the most general results in that direction).  Note also that G. Kasparov and G. Skandalis had another proof of Pimsner long exact sequence when studying KK-theory for buildings \cite{KS91}.

\vspace{0.2cm}

\noindent However the results we obtain here are based on a completely different point of view. Introduced in \cite{FF13}, the full or reduced fundamental C*-algebras of a graph of C*-algebras allows to treat on
equal footings  amalgamated  free products and HNN extensions (and in particular cross-product by the integers).  Let's describe its context. A graph of C*-algebras is a finite oriented graph with unital C*-algebras attached to its edges ($B_e$) and vertices ($A_v$) such that for any edge $e$  there are embeddings $r_e$ and $s_e$ of $B_e$  in $A_{r(e)}$ and  $A_{s(e)}$  with $r(e)$ the range of $e$ and $s(e)$ its source.
As for groups, the full fundamental C*-algebra of the graph is a quotient of the universal C*-algebra generated by the $A_v$ and unitaries $u_e$ such that $u_e^*s_e(b)u_e=r_e(b)$ for all $b\in B_e$.
In the presence of conditional expectations from $A_{s(e)}$ and $A_{r(e)}$ onto $B_e$, one can also construct various representations of the full fundamental C*-algebra on Hilbert modules over $A_v$ or $B_e$. 
It is the interplay with the representations that yields the tools we need to prove our results.

\vspace{0.2cm}

\noindent  In our previous paper \cite{FG15}, we first looked at one of the simplest graphs : one edge, two different endpoints. The full fundamental C*-algebra is then the full amalgamated free product.  When the conditional expectations are {\sl not} GNS-faithful, there are two possible reduced versions: the reduced free product of D. Voiculecscu, that we call the edge-reduced amalgamated free product and the vertex-reduced amalgamated free product we did construct in \cite{FG15}. We did show that the full amalgamated free product and the vertex-reduced amalgamated free product are always K-equivalent and we did exhibit a long exact sequence in KK-theory for both of them.

\vspace{0.2cm}

\noindent In this paper, we extend the results of \cite{FG15} to any fundamental C*-algebra of a finite graph of C*-algebras in the presence of conditional expectations, even non GNS-faithful ones.

\vspace{0.2cm}

\noindent Our first task is to introduce the good version of the reduced fundamental C*-algebra since there are several possible constructions of the reduced fundamental C*-algebra when the conditional expectations are not GNS-faithful and this fact was not clearly known to the authors in \cite{FF13} in which it was always assumed that the conditional expectations are GNS-faithful. The construction of the vertex-reduced fundamental C*-algebra is made in section $2$. We also describe in details its fundamental properties.

\vspace{0.2cm}

\noindent Our second task is to define the boundary maps in the long exact sequence. This will be done in the more possible natural way: by multiplication, in the Kasparov product sense, by some elements in $KK^1$ that we construct in a very natural and geometric way in section $3$. We also study the fundamental properties of these $KK^1$ elements which will be useful to prove the exactness of the sequence.

\vspace{0.2cm}

\noindent In section $4$ we prove our main result: the exactness of the sequence. This is done by induction, using the analogue of Serre's devissage process, the properties of our $KK^1$ elements and the initial cases: the amalgamated free product case which was done in \cite{FG15} and the HNN-extension case which can be deduce from the amalgamated free product case by a remark of Ueda \cite{Ue08}. Explicitly, if $C$ is any separable
C*-algebra, $P$ the full or reduced fundamental C*-algebra of the finite graph of C*-algebras $(\mathcal{G},A_p,B_e)$ then we have the two $6$-terms exact sequence, where $E^+$ is the set of positive edges and $V$ is the vertex set of the graph $\Gr$, 
$$\begin{array}{rcl}
   \bigoplus_{e\in E^+} KK^0(C,B_e)&\overset{\sum s_e^*-r_e^*}{\longrightarrow}\bigoplus_{p\in V} KK^0(C,A_p) \overset{}{\longrightarrow} &KK^0(C,P)\\
\uparrow&&\downarrow\\
KK^1(C,P)& \longleftarrow\bigoplus_{p\in V} KK^1(C,A_p) \overset{\sum s_e^*-r_e^*}{\longleftarrow}&  \bigoplus_{e\in E^+} KK^1(C,B_e) \\
\end{array}
$$
and
$$\begin{array}{rcl}
   \bigoplus_{e\in E^+} KK^0(B_e,C)&\overset{\sum {s_e}_*-{r_e}_*}{\longleftarrow}\bigoplus_{p\in V} KK^0(A_p,C) \overset{}{\longleftarrow} &KK^0(P,C)\\
\downarrow&&\uparrow\\
KK^1(P,C)& \longrightarrow\bigoplus_{p\in V} KK^1(A_p,C) \overset{\sum {s_e}_*-{r_e}_*}{\longrightarrow}&  \bigoplus_{e\in E^+} KK^1(B_e,C) \\
\end{array}
$$

\noindent In section $5$ we give some applications of our results. A direct Corollary of our results is that the full and vertex-reduced fundamental C*-algebra of a graph of C*-algebras are K-equivalent. This generalizes and simplifies the results of Pimsner about the KK-theory of crossed-products by groups acting on trees \cite{Pi86}. Also, our results imply that the fundamental quantum group of a graph of discrete quantum groups is K-amenable if and only if all the vertex quantum groups are K-amenable. This generalizes and simplifies the results of \cite{FF13}.
\section{Preliminaries}

\subsection{Notations and conventions} All C*-algebras and Hilbert modules are supposed to be separable. For a C*-algebra $A$ and a Hilbert $A$-module $H$ we denote by $\mathcal{L}_A(H)$ the C*-algebra a $A$-linear adjointable operators from $H$ to $H$ and by $\mathcal{K}_A(H)$ the sub-C*-algebra of $\mathcal{L}_A(H)$ consisting of $A$-compact operators. We write $L_A(a)\in\mathcal{L}_A(A)$ the left multiplication operator by $a\in A$. We use the term \textit{ucp} for unital completely positive. When $\varphi\,:\,A\rightarrow B$ is a ucp map the \textit{GNS construction of} is the unique, up to a canonical isomorphism, triple $(H,\pi,\xi)$ such that $H$ is a Hilbert $B$-module, $\pi\,:\,A\rightarrow\mathcal{L}_B(H)$ is a unital $*$-homomorphism and $\xi\in H$ is a vector such that $\pi(A)\xi\cdot B$ is dense in $H$ and $\langle\xi,\pi(a)\xi\cdot b\rangle=\varphi(a)b$. We refer the reader to \cite{Bl86} for basics notions about Hilbert C*-modules and KK-theory.

\subsection{Some homotopies}

\begin{lemma}\label{Lem-Homotopy}
Let $A$, $B$ be unital C*-algebras, $H$, $K$ Hilbert $B$-modules, $\pi\,:\,A\rightarrow\mathcal{L}_B(H)$, $\rho\,:\,A\rightarrow\mathcal{L}_B(K)$ unital $*$-homomorphisms and $F\in\mathcal{L}_B(H,K)$ a partial isometry such that $F\pi(a)-\rho(a)F\in\mathcal{K}_B(H,K)$ for all $a\in A$ and $F^*F-1\in\mathcal{K}_B(H)$. Then, $[(K,\rho,V)]=0\in KK^1(A,B)$, where $V=2FF^*-1$.
\end{lemma}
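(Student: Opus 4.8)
The plan is to exhibit an explicit operator homotopy connecting the Kasparov module $(K,\rho,V)$ to a degenerate one. The key observation is that the hypotheses say $F$ is "almost" a unitary intertwining $\pi$ and $\rho$ modulo compacts: $F^*F - 1$ is compact on $H$, so $Q := 1 - FF^*$ is a projection in $\mathcal{L}_B(K)$ that is compact (since $FF^* = FF^*FF^* + F(1-F^*F)F^*$ and $1-FF^* = (1-FF^*)^2 + \text{compact}$, one checks $Q$ is compact), and $V = 2FF^* - 1 = 1 - 2Q$ is a self-adjoint unitary with $V\rho(a) - \rho(a)V = -2(Q\rho(a) - \rho(a)Q) \in \mathcal{K}_B(K)$, using $F\pi(a) - \rho(a)F \in \mathcal{K}_B(H,K)$ together with the adjoint relation. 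So $(K,\rho,V)$ is a genuine odd Kasparov module; the point is that it is trivial.

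First I would reduce to a cleaner picture. Since $F$ is a partial isometry with $F^*F - 1$ compact, on the orthogonal complement of a finitely-generated-projective-type summand $F$ is an isometry; more precisely consider the module $H \oplus K$ with representation $\pi \oplus \rho$ and the operator built from $F$ and $F^*$. The standard trick is to form $\widetilde{F} = \begin{pmatrix} 0 & F^* \\ F & 0\end{pmatrix}$ acting on $K \oplus H$, or rather to compare $(K,\rho,V)$ with $(H\oplus K, \pi\oplus\rho, W)$ for a suitable symmetry $W$ and show the extra summand $(H,\pi,\pm 1)$ is degenerate. Concretely: on $H \oplus K$ put the symmetry $W_F = \begin{pmatrix} -(1-2F^*F)^{?} & \ast \\ \ast & 1-2FF^*\end{pmatrix}$; the honest construction is to use the "rotation" homotopy $\cos(t)\cdot(\text{diag part}) + \sin(t)\cdot(\text{off-diag part built from } F, F^*)$ for $t \in [0,\pi/2]$, which at $t=0$ gives the symmetry whose $K$-corner is $V$ plus a degenerate $H$-piece, and at $t = \pi/2$ gives a symmetry that commutes with $\pi\oplus\rho$ exactly modulo compacts in a way making the whole module degenerate (because the off-diagonal part intertwines the two representations up to compacts). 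Throughout the homotopy one must check the three Kasparov conditions — self-adjointness up to compacts, $W_t^2 - 1$ compact, and $[W_t, (\pi\oplus\rho)(a)]$ compact — all of which follow from $F\pi(a) - \rho(a)F \in \mathcal{K}_B$ and $F^*F - 1 \in \mathcal{K}_B$ by direct matrix computation, with the compactness of $1 - FF^*$ deduced as above.

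The main obstacle, and the step requiring care, is handling the fact that $F$ need not be an isometry \emph{exactly} — only $F^*F - 1$ is compact — and that $1 - FF^*$ need not vanish; both "defects" are compact but nonzero, so the rotation homotopy does not land in exact symmetries, only in symmetries-modulo-compacts, and one must verify that the endpoint module is \emph{degenerate} (operator commutes with the representation on the nose, squares to exactly $1$, is exactly self-adjoint) rather than merely a trivial-looking Kasparov module. The clean way around this is: first use a preliminary homotopy/stabilization to replace $F$ by an actual isometry $F_0$ with $F - F_0$ compact (possible since the partial isometries with cokernel-defect compact are a "small" perturbation of isometries, or by adding on a degenerate module to absorb the defect), reducing to the case $F^*F = 1$; then $Q = 1 - FF^*$ is a compact projection, $V = 1 - 2Q$, and the rotation homotopy $t \mapsto (H\oplus K,\ \pi\oplus\rho,\ W_t)$ with $W_t$ the symmetry rotating $\begin{pmatrix} 2F^*F - 1 & 0\\0 & 2FF^* - 1\end{pmatrix} = \begin{pmatrix} 1 & 0 \\ 0 & V\end{pmatrix}$ into $\begin{pmatrix} \cos 2t & \sin 2t\, F^*\\ \sin 2t\, F & -\cos 2t\end{pmatrix}$ connects $(H,\pi,1)\oplus(K,\rho,V)$ (degenerate first summand, so KK-class equal to $[(K,\rho,V)]$) to the module with operator $\begin{pmatrix} 0 & F^*\\ F & 0\end{pmatrix}$, which commutes with $\pi\oplus\rho$ modulo compacts and squares to $\begin{pmatrix} F^*F & 0\\0 & FF^*\end{pmatrix} = \begin{pmatrix} 1 & 0\\0 & 1-Q\end{pmatrix}$, differing from $1$ by a compact — and a final compact perturbation of this operator makes it a genuine symmetry intertwining $\pi\oplus\rho$ exactly, i.e. degenerate. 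Hence $[(K,\rho,V)] = 0$ in $KK^1(A,B)$. I would organize the writeup around the single formula for $W_t$ and then dispatch the three verifications in one paragraph each, flagging the isometry reduction as the only genuinely delicate point.
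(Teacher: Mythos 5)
Your overall plan --- a rotation homotopy on the direct sum module connecting $(K,\rho,V)$ plus a degenerate summand to something trivial --- is the same device the paper uses, but there is a genuine gap exactly at the point you flag as delicate. You claim that $Q=1-FF^*$ is compact. This does not follow from the hypotheses and is false in general: for a partial isometry the identity $FF^*=FF^*FF^*+F(1-F^*F)F^*$ is vacuous (the second term is $0$), and $(1-FF^*)^2=1-FF^*$ holds exactly, so no compactness can be extracted. Only the defect $F^*F-1$ on $H$ is assumed compact; nothing controls the projection $1-FF^*$ on $K$ (take $F$ an isometry onto a complemented submodule with non-compact complement). In the paper's own application of the lemma (proof of assertion (3) of Proposition \ref{PropKK1}), $1-F_pF_p^*$ is the projection onto the orthogonal complement of $\bigoplus_e \mathcal{R}_e\otimes A_p$, which is not compact; if it were, the lemma would be trivial, since $V$ would then be a compact perturbation of $1$ and $(K,\rho,V)$ a compact perturbation of a degenerate cycle.

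This false claim infects the remaining steps. The proposed reduction to an exact isometry is itself obstructed (a compact perturbation cannot change the index, and even for an exact isometry $1-FF^*$ need not be compact, so nothing is gained). More seriously, your rotation $W_t=\left(\begin{smallmatrix}\cos 2t& \sin 2t\,F^*\\ \sin 2t\,F& -\cos 2t\end{smallmatrix}\right)$ satisfies $W_t^2-1=\operatorname{diag}\bigl(\sin^2(2t)(F^*F-1),\,\sin^2(2t)(FF^*-1)\bigr)$, and the second entry multiplied by $\rho(a)$ is not compact, so the intermediate triples are not Kasparov modules; likewise the endpoint $\left(\begin{smallmatrix}0&F^*\\F&0\end{smallmatrix}\right)$ cannot be made degenerate by a compact perturbation. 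The repair is precisely the paper's trick: freeze the corner $\left(\begin{smallmatrix}1-FF^*&0\\0&0\end{smallmatrix}\right)$ along the whole homotopy and rotate only $\operatorname{diag}(FF^*,-1)$ into the off-diagonal part built from $F$ and $F^*$, as in the operator $U_t$ of the paper. Then $U_t^2-1=\sin^2(\pi t)\operatorname{diag}(0,F^*F-1)$ is compact using only the stated hypothesis, the commutator conditions follow from $F\pi(a)-\rho(a)F\in\mathcal{K}_B(H,K)$, the value at $t=0$ is the degenerate symmetry $\operatorname{diag}(1,-1)$, and the value at $t=1$ splits as $[(K,\rho,-V)]$ plus a degenerate summand, which yields $[(K,\rho,V)]=0$.
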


\begin{proof}
Let $\alpha:=[(K,\rho,V)]\in KK^1(A,B)$. For $t\in[0,1]$, define
$$U_t=\left(\begin{array}{cc}1-FF^*&0\\0&0\end{array}\right)+\cos(\pi t)\left(\begin{array}{cc}FF^*&0\\0&-1\end{array}\right)-\sin(\pi t)\left(\begin{array}{cc}0&F\\F^*&0\end{array}\right)\in\mathcal{L}_B(K\oplus H).$$
We have $U_0=\left(\begin{array}{cc}1&0\\0&-1\end{array}\right)$ and $U_1=-\left(\begin{array}{cc}V&0\\0&1\end{array}\right)$. Note that, for all $t\in [0,1]$, $U_t^*=U_t$ and,
\begin{eqnarray*}
U_t^2&=&\left(\begin{array}{cc}1-FF^*&0\\0&0\end{array}\right)+\cos(\pi t)^2\left(\begin{array}{cc}FF^*&0\\0&1\end{array}\right)+\sin(\pi t)^2\left(\begin{array}{cc}FF^*&0\\0&F^*F\end{array}\right)\\
&=&\left(\begin{array}{cc}1-FF^*&0\\0&0\end{array}\right)+\left(\begin{array}{cc}FF^*&0\\0&1\end{array}\right)+K_t=\left(\begin{array}{cc}1&0\\0&1\end{array}\right)+K_t,
\end{eqnarray*}
where $K_t=\sin(\pi t)^2\left(\begin{array}{cc}0&0\\0&F^*F-1\end{array}\right)\in\mathcal{K}_B(K\oplus H)$ for all $t\in[0,1]$, since $F^*F-1\in\mathcal{K}_B(H)$. Moreover, $U_t(\rho\oplus\pi)(a)-(\rho\oplus\pi)(a)U_t\in\mathcal{K}_B(K\oplus H)$ for all $a\in A$ since $F\pi(a)-\rho(a)F\in\mathcal{K}_B(H,K)$ for all $a\in A$. Consider the unique operators $U\in\mathcal{L}_{B\ot C([0,1])}(K\oplus H)\ot C([0,1]))$ and $K\in \mathcal{K}_{B\ot C([0,1])}(K\oplus H)\ot C([0,1]))$ such that the evaluation of $U$ at $t$ is $U_t$ and the evaluation of $K$ at $t$ is $K_t$ for all $t\in[0,1]$. In particular we have $U=U^*$ and $U^2=1+K$ and, since $U_t(\rho\oplus\pi)(a)-(\rho\oplus\pi)(a)U_t\in\mathcal{K}_B(K\oplus H)$ for all $a\in A$ and all $t\in[0,1]$, we have
$$U(\rho\oplus\pi)(a)\ot 1_{C([0,1])}-(\rho\oplus\pi)(a)\ot 1_{C([0,1])}U\in\mathcal{K}_{B\ot C([0,1])}((K\oplus H)\ot C([0,1]))\quad\text{for all}\,\,a\in A.$$
Hence we get an homotopy
$$\gamma=[((K\oplus H)\ot C([0,1]),(\rho\oplus\pi)\ot 1_{C([0,1])},U)]\in KK^1(A\ot C([0,1]),B\ot C([0,1]))$$
between $\gamma_0=[(K\oplus H,\rho\oplus\pi, U_0)]=[(K\oplus H,\rho\oplus\pi, \left(\begin{array}{cc}1&0\\0&-1\end{array}\right))]=0$ since the triple is degenerated and $\gamma_1=[(K\oplus H,\rho\oplus\pi, U_1)]=[(K\oplus H,\rho\oplus\pi, -\left(\begin{array}{cc}V&0\\0&1\end{array}\right))]$. Hence, $\gamma_1=x\oplus y$, where $x=[(K,\rho,-V)]=-\alpha$ and $y=[(H,\pi,-\id_H)]=0$, since the triple is degenerated.
\end{proof}

 \subsection{Fundamental C*-algebras}
\noindent In this section we recall the results and notations of \cite{FF13} and generalize the constructions to the case of non GNS-faithful conditional expectations.

\vspace{0.2cm}

\noindent If $\Gr$ is a graph in the sense of \cite[Def 2.1]{Se77}, its vertex set will be denoted $\V(\Gr)$ and its edge set will be denoted $\E(\Gr)$. We will always assume that $\Gr$ is at most countable. For $e\in\E(\Gr)$ we denote by $s(e)$ and $r(e)$ respectively the source and range of $e$ and by $\rev{e}$ the inverse edge of $e$. An \emph{orientation} of $\Gr$ is a partition $\E(\Gr) = \E^{+}(\Gr)\sqcup \E^{-}(\Gr)$ such that $e\in \E^{+}(\Gr)$ if and only if $\rev{e}\in \E^{-}(\Gr)$. We call $\Gr'\subset \Gr$ a \textit{connected subgraph} if $V(\Gr')\subset V(\Gr)$, $E(\Gr')\subset E(\Gr)$ such that $e\in E(\Gr')$ if and only if $\overline{e}\in E(\Gr')$ and the graph $\Gr'$ with the source map and inverse map given map the ones of $\Gr$ restricted to $E(\Gr')$ is a connected graph.

\vspace{0.2cm}

\noindent Let $(\Gr, (A_{q})_{q}, (B_{e})_{e})$ be a \textit{graph of unital C*-algebras}. This means that

\begin{itemize}
\item $\Gr$ is a connected graph.
\item For every $q\in\V(\Gr)$ and every $e\in\E(\Gr)$, $A_{q}$ and $B_{e}$ are unital C*-algebras.
\item For every $e\in \E(\Gr)$, $B_{\rev{e}} = B_{e}$.
\item For every $e\in\E(\Gr)$, $s_{e} : B_{e} \rightarrow A_{s(e)}$ is a unital faithful $*$-homomorphism.
\end{itemize}
For every $e\in \E(\Gr)$, we set $r_{e} = s_{\rev{e}} : B_{e} \rightarrow A_{r(e)}$, $B_{e}^{s} = s_{e}(B_{e})$ and $B_{e}^{r} = r_{e}(B_{e})$. Given a maximal subtree $\mathcal{T}\subset\Gr$ the \emph{maximal fundamental C*-algebra with respect to $\T$} is the universal C*-algebra generated by the C*-algebras $A_{q}$, $q\in\V(\Gr)$, and by unitaries $u_{e}$, $e\in\E(\Gr)$, such that
\begin{itemize}
\item For every $e\in\E(\Gr)$, $u_{\rev{e}} = u_{e}^{*}$.
\item For every $e\in\E(\Gr)$ and every $b\in B_{e}$, $u_{\rev{e}}s_{e}(b)u_{e}=r_{e}(b)$.
\item For every $e\in\E(\T)$, $u_{e}=1$.
\end{itemize}
This C*-algebra will be denoted by $P$ or $P_\Gr$. We will always view $A_p\subset P$ for all $p\in V(\Gr)$ since, as explain in the following remark, the canonical unital $*$-homomorphisms from $A_p$ to $P$ are all faithful.

\begin{remark}\label{rem:faithrep}
The C*-algebra $P$ is not zero and the canonical maps $\nu_p\,:\,A_p\rightarrow P$ are injective for all $p\in V(\Gr)$. This follows easily from the Voiculescu's absorption Theorem since we did assume all our C*-algebras separable and the graph $\Gr$ countable. Indeed, since $A_p$ is separable for all $p\in V(\Gr)$ and since $\Gr$ is at most countable we can representation faithfully all the $A_p$ on the same separable Hilbert space $H$. Write $\pi'_p\,:\, A_p\rightarrow \mathcal{L}(H)$ the faithful representation. Replacing $H$ by $H\ot H$ and $\pi'_p$ by $\pi'_p\ot\id$ if necessary, we may and will assume that $\pi'_p(A_p)\cap\mathcal{K}(H)=\{0\}$ for all $p\in V(\Gr)$. Write $C=\mathcal{L}(H)/\mathcal{K}(H)$ the Calkin algebra and $Q\,:\,\mathcal{L}(H)\rightarrow C$ the canonical surjection. Fix an orientation of $\Gr$. For $e\in E(\Gr)$ we have two faithful representations $\pi'_{s(e)}\circ s_e$ and $\pi'_{r(e)}\circ r_e$ of $B_e$ on $H$, both having trivial intersection with $K(H)$. By Voiculescu's absorption Theorem there exists, for all $e\in E^+(\Gr)$, a unitary $V_e\in C$ such that $Q\circ\pi'_{r(e)}(r_e(b))=V_e^*Q\circ\pi'_{s(e)}(s_e(b))V_e$ for all $b\in B_e$ and all $e\in E^+(\Gr)$. For $e\in E^-(\Gr)$ define $V_e:=(V_{\overline{e}})^*$ so that the relations $(V_e)^*=V_{\overline{e}}$ and $Q\circ\pi'_{r(e)}(r_e(b))=V_e^*Q\circ\pi'_{s(e)}(s_e(b))V_e$ holds for all $b\in B_e$ and all $e\in E(\Gr)$. When $\omega=(e_1,\dots e_n)$ is a path in $\Gr$, we denote by $V_\omega$ the unitary $V_\omega:=V_{e_1}\dots V_{e_n}\in C$ (if $\omega$ is the empty path we put $V_\omega=1$). Fix a maximal subtree $\mathcal{T}\subset\mathcal{G}$. For $p,q\in V(\Gr)$ let $g_{pq}$ be the unique geodesic path in $\mathcal{T}$ from $p$ to $q$ (if $p=q$ then $g_{pq}$ is the empty path by convention). Fix $p_0\in V(\mathcal{G})$ and, for $e\in E(\mathcal{G})$, define $U_e:=(V_{g_{s(e)}p_0})^*V_{(e,g_{r(e)p_0})}$ so that the relations $U_{\overline{e}}=U_e^{*}$ holds for any $e\in E(\Gr)$ and $U_e=1$ for any $e\in E(\mathcal{T})$. Finally, for $p\in V(\Gr)$, define the faithful (since $\pi_p'(A_p)\cap\mathcal{K}(H)=\{0\}$) unital $*$-homomorphism $\pi_p\,:\,A_p\rightarrow C$ by $\pi_p:=(V_{g_{p_0p}})^* Q\circ\pi_p'(\cdot)V_{g_{p_0p}}$. Then, it is easy to check that the relation $\pi_{r(e)}(r_e(b))=U_e^*\pi_{s(e)}(s_e(b))U_e$ holds for all $b\in B_e$ and all $e\in E(\Gr)$. Hence, $P$ is not zero and we have a unique unital $*$-homomorphism $\pi\,:\, P\rightarrow C$ such that $\pi(u_e)=U_e$ and $\pi\circ\nu_p=\pi_p$ for all $p\in V(\Gr)$. In particular, the canonical map $\nu_p$ from $A_p$ to $P$ is faithful since $\pi_p$ is faithful. Note that, when the C*-algebras $A_p$ are not supposed to be separable and/or the graph $\Gr$ is not countable anymore the result is still true by considering the universal representation, as in the proof of \cite[Theorem 4.2]{P1} (which was inspired by \cite{Bl78}).
\end{remark}

\begin{remark}\label{rem:pathmax}
Let $\mathcal{A}\subset P$ be the $*$-algebra generated by the $A_q$, for $q\in V(\Gr)$, and the unitaries $u_e$, for $e\in E(\Gr)$. Then $\mathcal{A}$ is a dense unital $*$-algebra of $P$. Moreover, since the graph $\Gr$ is supposed to be connected, for any fixed $p\in \V(\Gr)$, $\mathcal{A}$ is the linear span of $A_{p}$ and elements of the form $a_{0}u_{e_{1}} \dots u_{e_{n}}a_{n}$ where $(e_{1}, \dots, e_{n})$ is a path in $\Gr$ from $p$ to $p$, $a_{0}\in A_{p}$ and $a_{i}\in A_{r(e_{i})}$ for $1\leqslant i\leqslant n$.\end{remark}

\noindent We now recall the construction of the reduced fundamental C*-algebra, when there is a family of conditional expectations $E_e^s\,:\,A_{s(e)}\rightarrow B_e^s$, for $e\in E(\Gr)$. Set $E_{e}^{r} = E_{\rev{e}}^{s} : A_{r(e)} \rightarrow B_{e}^{r}$ and note that, in contrast with \cite{FF13}, we do not assume the conditional expectations $E_e^s$ to be GNS-faithful. However, as it was already mentioned in \cite{FF13}, all the constructions can be easily carried out without the non-degeneracy assumption. Let us recall these constructions now. We shall omit the proofs which are exactly the same as the GNS-faithful case and concentrate only on the differences with the GNS-faithful case.

\vspace{0.2cm}

\noindent For every $e\in\E(\Gr)$ let $(K_{e}^{s},\rho_{e}^{s},\eta_{e}^{s})$ be the GNS construction of the ucp map $s_{e}^{-1}\circ E_{e}^{s}\,:\,A_{s(e)}\rightarrow B_e$. This means that $K_{e}^{s}$ is a right Hilbert $B_{e}$-module, $\rho_e^s\,:\,A_{s(e)}\rightarrow\mathcal{L}_{B_e}(K_e^s)$ and $\eta_e^s\in K_e^s$ are such that $K_e^s=\overline{\rho_e^s(A_{s(e)})\eta_e^s\cdot B_e}$ and $\langle\eta_e^s,\rho_e^s(a)\eta_e^s\cdot b\rangle=s_e^{-1}\circ E_e^s(a)b$. In particular, we have the formula $\rho_e^s(a)\eta_e^s\cdot b=\rho_e^s(as_e(b))\eta_e^s$. Let us notice that the submodule $\eta_{e}^{s}.B_{e}$ of $K_{e}^{s}$ is orthogonally complemented. In fact, its orthogonal complement $(K_{e}^{s})^{\circ}$ is the closure of the set $\{\rho_e^s(a)\eta_e^s\,:\,a\in A_{s(e)}, E_{e}^{s}(a)=0\}$ which is easily seen to be a Hilbert $B_e$-submodule of $K_e^s$. Similarly, the orthogonal complement of $\eta_{e}^{r}.B_{e}$ in $K_{e}^{r}$ will be denoted $(K_{e}^{r})^{\circ}$. Note that $\rho_e^s(B_e^s)(K_{e}^{s})^{\circ}\subset (K_{e}^{s})^{\circ}$.

\vspace{0.2cm}

\noindent Let $n\geqslant 1$ and $w = (e_{1}, \dots, e_{n})$ a path in $\Gr$. We define Hilbert C*-modules $K_{i}$ for $0\leqslant i\leqslant n$ by
\begin{itemize}
\item $K_{0} = K_{e_{1}}^{s}$
\item If $e_{i+1}\neq \rev{e}_{i}$, then $K_{i} = K_{e_{i+1}}^{s}$
\item If $e_{i+1}= \rev{e}_{i}$, then $K_{i} = (K_{e_{i+1}}^{s})^{\circ}$
\item $K_{n} = A_{r(e_{n})}$
\end{itemize}
For $0\leqslant i\leqslant n-1$, $K_{i}$ is a right Hilbert $B_{e_{i+1}}$-module and $K_{n}$ will be seen as a right Hilbert $A_{r(e_{n})}$-module. We define, for $1\leqslant i \leqslant n-1$, the unital $*$-homomorphism
$$\rho_{i} = \rho_{e_{i+1}}^{s}\circ r_{e_{i}} : B_{e_{i}} \rightarrow \LL_{B_{e_{i+1}}}(K_{i}),$$
and, $\rho_n=L_{A_{r(e_n)}}\circ r_{e_n}\,:\,B_{e_{n}} \rightarrow \LL_{A_{r(e_{n})}}(K_{n})$. We can now define the right Hilbert $A_{r(e_n)}$-module
\begin{equation*}
H_{w}=K_{0}\underset{\rho_1}{\otimes} \dots \underset{\rho_n}{\otimes} K_{n}
\end{equation*}
endowed with the left action of $A_{s(e_{1})}$ given by the unital $*$-homomorphism defined by
$$\lambda_w=\rho_{e_1}^s\ot\id\,:\, A_{s(e_1)}\rightarrow\LL_{A_{r(e_n)}}(H_w).$$

\noindent For any two vertices $p, q\in \V(\Gr)$, we define the Hilbert $A_p$-module $H_{q,p}=\bigoplus_{w}H_{w}$, where the sum runs over all paths $w$ in $\Gr$ from $q$ to $p$. By convention, when $q=p$, the sum also runs over the empty path, where $H_{\emptyset} = A_{p}$ with its canonical Hilbert $A_{p}$-module structure. We equip this Hilbert C*-module with the left action of $A_{q}$ which is given by  $\lambda_{q,p}\,:\, A_{q}\rightarrow\LL_{A_p}(H_{q,p})$ defined by $\lambda_{q,p}=\bigoplus_{w}\lambda_{w}$, where, when $q=p$ and $w=\emptyset$ is the empty path, $\lambda_{\emptyset}:=L_{A_p}$.

\vspace{0.2cm}

\noindent For every $e\in\E(\Gr)$ and $p\in\V(\Gr)$, we define an operator $u_{e}^{p} : H_{r(e),p} \rightarrow H_{s(e),p}$ in the following way. Let $w$ be a path in $\Gr$ from $r(e)$ to $p$ and let $\xi\in \HH_w$.
\begin{itemize}
\item If $p = r(e)$ and $w$ is the empty path, then $u_{e}^{p}(\xi) = \eta_{e}^{s}\otimes\xi \in H_{(e)}$.
\item If $n = 1$, $w = (e_{1})$, $\xi = \rho_{e_1}^s(a)\eta_{e_1}^s\otimes\xi'$ with $a\in A_{s(e_{1})}$ and $\xi'\in A_{p}$, then
\begin{itemize}
\item If $e_{1} \neq \rev{e}$, $u_{e}^{p}(\xi) = \eta_{e}^{s}\otimes\xi \in H_{(e, e_{1})}$.
\item If $e_{1} = \rev{e}$, $u_{e}^{p}(\xi) =
\left\{\begin{array}{cccc}
\eta_{e}^{s}\otimes\xi & \in H_{(e, e_{1})} & \text{if} & E_{e_1}^s(a)=0, \\
r_{e_{1}}\circ s_{e_{1}}^{-1}(a)\xi' & \in A_{p} & \text{if} & a\in B^{s}_{e_{1}}.
\end{array}\right.$
\end{itemize}
\item If $n\geqslant 2$, $w = (e_{1}, \dots, e_{n})$, $\xi = \rho_{e_1}^s(a)\eta_{e_1}^s\otimes\xi'$ with $a\in A_{s(e_{1})}$ and $\xi' \in K_{1} \underset{\rho_{2}}{\otimes} \dots \underset{\rho_{n}}{\otimes} K_{n}$, then
\begin{itemize}
\item If $e_{1} \neq \rev{e}$, $u_{e}^{p}(\xi) = \eta_{e}^{s}\otimes\xi \in H_{(e, e_{1}, \dots, e_{n})}$.
\item If $e_{1} = \rev{e}$, $u_{e}^{p}(\xi) =
\left\{\begin{array}{cccc}
\eta_{e}^{s}\otimes\xi & \in H_{(e, e_{1}, \dots, e_{n})} & \text{if} & E_{e_1}^s(a)=0, \\
(\rho_1(s_{e_{1}}^{-1}(a))\ot\id)\xi' & \in H_{(e_{2}, \dots, e_{n})} & \text{if} & a \in B_{e_{1}}^{s}.\end{array}\right.$
\end{itemize}
\end{itemize}
One easily checks that the operators $u_{e}^{p}$ commute with the right actions of $A_{p}$ on $H_{r(e), p}$ and $H_{s(e), p}$ and extend to unitary operators (still denoted $u_{e}^{p}$) in $\mathcal{L}_{A_p}(H_{r(e),p},H_{s(e),p})$ satisfying $(u_{e}^{p})^{*} = u_{\rev{e}}^{p}$. Moreover, for every $e\in \E(\Gr)$ and every $b\in B_{e}$, the definition implies that
\begin{equation*}
u_{\rev{e}}^{p}\lambda_{s(e),p}(s_{e}(b))u_{e}^{p} = \lambda_{r(e),p}(r_{e}(b))\in\mathcal{L}_{A_p}(H_{r(e),p}).
\end{equation*}
Let $w=(e_1,\ldots, e_n)$ be a path in $\Gr$ and let $p\in\V(\Gr)$, we set $u^{p}_{w} = u_{e_{1}}^p \dots u_{e_{n}}^{p} \in \LL_{A_{p}}(H_{r(e_{n}),p},H_{s(e_{1}),p}).$

\vspace{0.2cm}

\noindent The $p$-\emph{reduced fundamental C*-algebra} is the C*-algebra
$$
P_p =  \left\langle (u^{p}_{z})^{*}\lambda_{q,p}(A_{q})u^{p}_{w} \vert q \in \V(\Gr),w,z \text{ paths from $q$ to $p$ }\right\rangle
\subset \LL_{A_{p}}(H_{p,p}).
$$

\noindent We sometimes write $P_p^\Gr=P_p$. Let us now explain how one can canonically view $P_p$ as a quotient of $P$. Let $\mathcal{T}$ be a maximal subtree in $\Gr$. Given a vertex $q\in \V(\Gr)$, we denote by $g_{qp}$ the unique geodesic path in $\mathcal{T}$ from $q$ to $p$. For every $e\in \E(\Gr)$, we define a unitary operator $w^{p}_{e}=(u^{p}_{g_{s(e)p}})^{*}u^{p}_{(e, g_{r(e)p})}\in P_p$.

\noindent For every $q\in\V(\Gr)$, we define a unital faithful $*$-homomorphism $\pi_{q, p}: A_{q} \rightarrow P_{p}$ by
\begin{equation*}
\pi_{q, p}(a) = (u^{p}_{g_{qp}})^{*}\lambda_{q,p}(a) u^{p}_{g_{qp}}\quad\text{for all  }a\in A_q.
\end{equation*}
It is not difficult to check that the following relations hold:
\begin{itemize}
\item $w^{p}_{\rev{e}} = (w^{p}_{e})^{*}$ for every $e\in\E(\Gr)$,
\item $w^{p}_{e} = 1$ for every $e\in \E(\T)$,
\item $w^p_{\rev{e}}\pi_{s(e), p}(s_e(b))w^p_{e} = \pi_{r(e), p}(r_e(b))$ for every $e\in \E(\Gr)$, $b\in B_{e}$.
\end{itemize}

\noindent Thus, we can apply the universal property of the maximal fundamental C*-algebra $P$ to get a unique surjective $*$-homomorphism $\lambda_{p}\,:\, P \rightarrow P_{p}$ such that $\lambda_p(u_e)=w_e^p$ for all $e\in\E(\Gr)$ and $\lambda_p(a)=\pi_{q,p}(a)$ for all $a\in A_q$ and all $q\in V(\Gr)$. We sometimes write $\lambda_p^\Gr=\lambda_p$.

\vspace{0.2cm}

\noindent Let $p_0,p,q\in V(\Gr)$ and $a=\lambda_{p_0,p}(a_{0})u^{p}_{e_{1}}\lambda_{s(e_2),p}(a_1)u^p_{e_2} \dots u^{p}_{e_{n}}\lambda_{q,p}(a_{n})\in\mathcal{L}_{A_p}(H_{q,p},H_{p_0,p})$, where $w = (e_{1}, \dots,  e_{n})$ is a (non-empty) path in $\Gr$ from $p_{0}$ to $q$, $a_{0}\in A_{p_{0}}$ and, for $1\leqslant i\leqslant n$, $a_{i}\in A_{r(e_{i})}$. The operator $a$ is said to be \emph{reduced} (from $p_0$ to $q$) if for all $1\leqslant i\leqslant n-1$ such that $e_{i+1} = \rev{e}_{i}$, we have $\EE_{e_{i+1}}^{s}(a_{i}) = 0$.

\vspace{0.2cm}

\noindent Let $w=(e_{1}, \dots,  e_{n})$ be a path from $p$ to $p$. It is easy to check that any reduced operator of the form $a = \lambda_{p_0,p}(a_{0})u^{p}_{e_{1}} \dots u^{p}_{e_{n}}\lambda_{q,p}(a_{n})$ is in $P_p$ and that the linear span $\mathcal{A}_p$ of $A_{p}$ and the reduced operators from $p$ to $p$ is a dense $*$-subalgebra of $P_p$.

\begin{remark}\label{rem:redmax}
The notion of reduced operator also makes sense in the maximal fundamental C*-algebra (if we assume the existence of conditional expectations) and, for any fixed $p\in V(\Gr)$, the linear span of $A_{p}$ and the reduced operators from $p$ to $p$ is the $*$-algebra $\mathcal{A}$ introduced in Remark \ref{rem:pathmax}, which is dense in the maximal fundamental C*-algebra. Observe that, by definition, the morphism $\lambda_p\,:\, P\rightarrow P_p$ is the unique unital $*$-homomorphism which is formally equal to the identity on the reduced operators from $p$ to $p$. More precisely, one has, for any reduced operator $a=a_0u_{e_1}\dots u_{e_n}a_n\in P$ from $p$ to $p$, $\lambda_p(a)=\lambda_{p,p}(a_0)u_{e_1}^p\dots u_{e_n}^p\lambda_{p,p}(a_n)$.
\end{remark}

\noindent We will need the following purely combinatorial lemma which gives an explicit decomposition of the product of two reduced operators in $P$ from $p$ to $p$ as a sum of reduced operators from $p$ to $p$ plus an element in $A_{p}$. For $e\in E(\Gr)$ and $x\in A_{r(e)}$ we write $\PP_e^r(x):=x-E_e^r(x)$.

\begin{lemma}\cite[Lemma 3.17]{FF13}\label{LemProduct}
Let $w=(e_{n}, \dots, e_{1})$ and $\mu = (f_{1}, \dots, f_{m})$ be paths from $p$ to $p$. Set $n_{0} = \max\{1\leqslant k\leqslant\min(n, m)\vert e_{i} = \rev{f}_{i}, \forall i\leqslant k\}$. If the above set is empty, set $n_{0} = 0$. Let $a = a_{n} u_{e_{n}} \dots u_{e_{1}}a_{0}\in P$ and $b=b_0u_{f_1}\dots u_{f_m} b_m\in P$ be reduced operators. Set $x_{0} = a_{0}b_{0}$ and, for $1\leqslant k\leqslant n_{0}$, $x_{k} = a_{k}(s_{e_{k}}\circ r_{e_{k}}^{-1}\circ E_{e_{k}}^r(x_{k-1}))b_{k}$ and $y_{k} =\PP_{e_k}^{r}(x_{k-1})$. The following holds :
\begin{enumerate}

\item If $n_{0} = 0$, then $ab = a_{n}u_{e_{n}} \dots u_{e_{1}}x_{0}u_{f_{1}} \dots u_{f_{m}}b_{m}$.

\item If $n_0=n=m$, then $ab=\sum_{k=1}^na_{n}u_{e_{n}} \dots u_{e_{k}}y_{k}u_{f_{k}} \dots u_{f_{n}}b_{n}+x_n.$

\item If $n_{0} = n<m$, then $ab = \sum_{k = 1}^{n}a_{n}u_{e_{n}} \dots u_{e_{k}}y_{k}u_{f_{k}} \dots u_{f_{m}}b_{m} + x_{n}u_{f_{n+1}} \dots u_{f_{m}}b_{m}$.

\item If $n_0=m<n$, then $ab = \sum_{k = 1}^{m}a_{n}u_{e_{n}} \dots u_{e_{k}}y_{k}u_{f_{k}} \dots u_{f_{m}}b_{m} + a_{n} u_{e_{n}} \dots u_{e_{m+1}} x_{m}$.

\item If $1\leqslant n_0<\min\{n,m\}$, then
$$ab = \sum_{k = 1}^{n}a_{n}u_{e_{n}} \dots u_{e_{k}}y_{k}u_{f_{k}} \dots u_{f_{m}}b_{m} + a_{n} u_{e_{n}}\dots u_{e_{n_{0}+1}}x_{n_{0}}u_{f_{n_{0}+1}} \dots u_{f_{m}}b_{m}.$$
\end{enumerate}
\end{lemma}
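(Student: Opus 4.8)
The plan is to peel off one layer of cancellation at a time, using only the defining relation of $P$ rewritten as $u_{e}\,r_{e}(b)\,u_{\rev{e}}=s_{e}(b)$ for $b\in B_{e}$, together with the decomposition $x=E_{e}^{r}(x)+\PP_{e}^{r}(x)$ of an element $x\in A_{r(e)}$, in which $E_{e}^{r}(\PP_{e}^{r}(x))=0$ because $E_{e}^{r}$ is an idempotent conditional expectation onto $B_{e}^{r}$. (GNS-faithfulness is never used, which is precisely why the argument of \cite[Lemma 3.17]{FF13} carries over verbatim in our setting.) Combining the two, for $x\in A_{r(e)}$ one obtains $u_{e}\,x\,u_{\rev{e}}=s_{e}(r_{e}^{-1}(E_{e}^{r}(x)))+u_{e}\,\PP_{e}^{r}(x)\,u_{\rev{e}}$, where the first summand lies in $A_{s(e)}$ and the second one cannot be simplified any further.

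Starting from the trivial identity $ab=a_{n}u_{e_{n}}\cdots u_{e_{1}}\,x_{0}\,u_{f_{1}}\cdots u_{f_{m}}b_{m}$ with $x_{0}=a_{0}b_{0}\in A_{p}$ --- which is already assertion (1) when $n_{0}=0$ --- I would run an induction for $1\leqslant k\leqslant n_{0}$. Since $e_{k}=\rev{f_{k}}$ for such $k$, the partially simplified expression contains the block $u_{e_{k}}\,x_{k-1}\,u_{f_{k}}$, with $x_{k-1}\in A_{r(e_{k})}$ flanked by $a_{k}$ on the left and $b_{k}$ on the right. Applying the identity above with $e=e_{k}$ and $x=x_{k-1}$ splits this block into the reduced operator $a_{n}u_{e_{n}}\cdots u_{e_{k}}\,y_{k}\,u_{f_{k}}\cdots u_{f_{m}}b_{m}$ (with $y_{k}=\PP_{e_{k}}^{r}(x_{k-1})$), which is set aside, plus a remaining piece that absorbs $a_{k}$ and $b_{k}$ into exactly $x_{k}=a_{k}\,s_{e_{k}}(r_{e_{k}}^{-1}(E_{e_{k}}^{r}(x_{k-1})))\,b_{k}\in A_{r(e_{k+1})}$, now sitting between $u_{e_{k+1}}$ and $u_{f_{k+1}}$, so the recursion continues. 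After $n_{0}$ steps $ab$ equals the sum of the $n_{0}$ set-aside reduced operators plus one leftover term, whose shape depends on which of $n,m$ coincides with $n_{0}$: it collapses to $x_{n}\in A_{p}$ when $n_{0}=n=m$ (case (2)), equals $x_{n}u_{f_{n+1}}\cdots u_{f_{m}}b_{m}$ when $n_{0}=n<m$ (case (3)), equals $a_{n}u_{e_{n}}\cdots u_{e_{m+1}}x_{m}$ when $n_{0}=m<n$ (case (4)), and keeps the general shape $a_{n}u_{e_{n}}\cdots u_{e_{n_{0}+1}}\,x_{n_{0}}\,u_{f_{n_{0}+1}}\cdots u_{f_{m}}b_{m}$ when $n_{0}<\min\{n,m\}$ (case (5)).

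It then remains to verify that every operator on the right-hand side is reduced from $p$ to $p$. For the $k$-th set-aside term the underlying path is $(e_{n},\dots,e_{k},f_{k},\dots,f_{m})$, and the algebra elements between consecutive $u$'s are exactly those of $a$ on the $e$-tail and those of $b$ on the $f$-tail, except that $y_{k}$ now sits at the junction $u_{e_{k}}\,y_{k}\,u_{f_{k}}$; hence all cancellation conditions on the two tails are inherited from the reducedness of $a$ and $b$, and the only new condition --- at that junction, where $f_{k}=\rev{e_{k}}$ --- is $E_{e_{k}}^{r}(y_{k})=0$, which holds by construction. For the leftover term in cases (3)--(5) the junction edges satisfy $e_{n_{0}+1}\neq\rev{f_{n_{0}+1}}$ by maximality of $n_{0}$, so that slot carries no condition, while in case (2) the leftover lies in $A_{p}$; the tails are reduced as before.

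I do not expect a genuine obstacle: the statement is purely combinatorial, with no analytic input. The only points needing attention are tracking the three degenerate regimes of the leftover term --- in particular checking that the collapsed elements $x_{n}$, respectively $x_{m}$, really land in $A_{p}$ --- and confirming that replacing $a_{0}b_{0}$ by $x_{0}$, $a_{k}(\cdots)b_{k}$ by $x_{k}$, and $x_{k-1}$ by $y_{k}$ creates no new reducibility away from the segment that has actually been cancelled, which is what the third paragraph addresses.
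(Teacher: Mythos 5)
Your argument is correct and is essentially the proof this paper delegates to \cite[Lemma 3.17]{FF13}: the telescoping induction splitting each block $u_{e_k}x_{k-1}u_{\overline{e}_k}$ via $x_{k-1}=E_{e_k}^r(x_{k-1})+\PP_{e_k}^r(x_{k-1})$ and the relation $u_{e}r_{e}(b)u_{\overline{e}}=s_{e}(b)$, with the reducedness of the set-aside terms coming from $E_{e_k}^r(y_k)=0$ and the inherited conditions on the two tails. Note only that in case (5) the displayed upper summation index $n$ should be read as $n_{0}$ (as you implicitly do, since $y_k$ is defined only for $k\leqslant n_{0}$), and that GNS-faithfulness indeed plays no role, so the statement holds verbatim in the present setting.
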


\noindent Note that the preceding Lemma also holds in $P_p$, for all $p\in V(\Gr)$, simply by applying the unital $*$-homomorphism $\lambda_p$ which is formally the identity on the reduced operators from $p$ to $p$, as explained in Remark \ref{rem:redmax}.

\vspace{0.2cm}

\noindent In the following Proposition we completely characterize the $p$-reduced fundamental C*-algebra: it is the unique quotient of $P$ for which there exists a GNS-faithful ucp map $P_p\rightarrow A_p$ which is zero on the reduced operators and "the identity on $A_p$". The proof of this result is contained in \cite{FF13} in the GNS-faithful case but it is not explicitly stated. Since the proof is the same as the one of \cite[Proposition 2.4]{FG15} and all the necessary arguments are contained in \cite{FF13}, we will only sketch the proof of the next Proposition.

\begin{proposition}\label{Prop-pvertex}
For all $p\in V(\Gr)$ the following holds.
\begin{enumerate}
\item The morphism $\lambda_p$ is faithful on $A_p$.
\item There exists a unique ucp map $\EE_p\,:\, P_p\rightarrow A_p$ such that $\EE_p\circ\lambda_p(a)=a$ for all $a\in A_p$ and 
$$\EE_p(\lambda_p(a_0u_{e_1}\dots u_{e_n} a_n))=0\text{ for all }a=a_0u_{e_1}\dots u_{e_n} a_n\in P\text{ a reduced operator from }p\text{ to }p.$$
Moreover, $\EE_p$ is GNS-faithful.
\item For any unital C*-algebra with a surjective unital $*$-homomorphism $\pi\,:\,P\rightarrow C$ and a GNS-faithful ucp map $E\,:\,C\rightarrow A_p$ such that $E\circ\lambda(a)=a$ for all $a\in A_p$ and
$$E(\pi(a_0u_{e_1}\dots u_{e_n} a_n))=0\text{ for all }a=a_0u_{e_1}\dots u_{e_n} a_n\in P\text{ a reduced operator from }p\text{ to }p$$
there exists a unique unital $*$-isomorphism $\nu\,:\, P_p\rightarrow C$ such that $\nu\circ\lambda_p=\pi$. Moreover, $\nu$ satisfies $E\circ\nu=\EE_p$.
\end{enumerate}
\end{proposition}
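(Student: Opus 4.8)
The plan is to work inside the concrete model $P_p\subset\LL_{A_p}(H_{p,p})$, using the orthogonal decomposition $H_{p,p}=\bigoplus_w H_w$ over paths $w$ from $p$ to $p$ (with $H_\emptyset=A_p$) and the ``vacuum vector'' $\xi_p:=1\in A_p=H_\emptyset$. Statement (1) is then immediate: $\lambda_p|_{A_p}=\pi_{p,p}=\lambda_{p,p}|_{A_p}$ acts on the summand $H_\emptyset=A_p$ as the left multiplication $L_{A_p}$, so $\lambda_p(a)=0$ forces $a=a\cdot 1=0$.

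For (2), I would set $\EE_p(x):=\langle\xi_p,x\,\xi_p\rangle\in A_p$ for $x\in P_p$; this is automatically unital and completely positive, and on $\lambda_p(A_p)$ it returns $\langle 1,\lambda_{p,p}(a)\,1\rangle=a$. The substantive point is that $\EE_p$ annihilates reduced operators: if $a=a_0u_{e_1}\cdots u_{e_n}a_n$ is reduced from $p$ to $p$ with underlying path $w=(e_1,\dots,e_n)$, then unwinding the definitions of the $u^p_{e_i}$ and of $\lambda_{p,p}$ shows that $\lambda_p(a)\,\xi_p$ lies in $H_w$; here the reducedness conditions $E^s_{e_{i+1}}(a_i)=0$ (whenever $e_{i+1}=\rev{e}_i$) are exactly what force each $u^p_{e_{i+1}}$ to act through its ``creation'' branch $\eta^s_{e_{i+1}}\otimes(\cdot)$ rather than collapsing to a shorter path, and since $H_w\perp H_\emptyset$ for $w\neq\emptyset$ we get $\EE_p(\lambda_p(a))=0$. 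Uniqueness of $\EE_p$ follows because $A_p$ together with the reduced operators spans a dense $*$-subalgebra $\mathcal{A}_p\subset P_p$. For GNS-faithfulness, the same computation shows $\lambda_{p,p}(a_0)u^p_{e_1}\lambda_{s(e_2),p}(a_1)\cdots u^p_{e_n}\lambda_{p,p}(a_n)\,\xi_p=\rho^s_{e_1}(a_0)\eta^s_{e_1}\otimes\cdots\otimes\rho^s_{e_n}(a_{n-1})\eta^s_{e_n}\otimes a_n$; using that $\rho^s_e(A_{s(e)})\eta^s_e$ is dense in $K^s_e$, that $\{\rho^s_e(a)\eta^s_e:E^s_e(a)=0\}$ is dense in $(K^s_e)^\circ$, and that $a_n$ sweeps out the last leg $K_n=A_p$, one sees that such vectors span a dense subspace of every $H_w$, so $\xi_p$ is cyclic for $P_p$ in $H_{p,p}$. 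Consequently the GNS construction of $\EE_p$ is $(H_{p,p},\iota,\xi_p)$ with $\iota\,:\,P_p\hookrightarrow\LL_{A_p}(H_{p,p})$ the tautological inclusion, which is injective; hence $\EE_p$ is GNS-faithful.

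For (3), I would compare the two ucp maps $E\circ\pi$ and $\EE_p\circ\lambda_p$ from $P$ to $A_p$. They agree on $A_p$ (each is the identity) and on every reduced operator from $p$ to $p$ (each vanishes, by hypothesis and by part (2) respectively), hence on the dense $*$-subalgebra $\mathcal{A}\subset P$ of Remark~\ref{rem:redmax}, so $E\circ\pi=\EE_p\circ\lambda_p$. Since $\pi$ and $\lambda_p$ are surjective and $E$, $\EE_p$ are GNS-faithful, the GNS construction of $E\circ\pi$ is $(\mathcal{K},\tau\circ\pi,\zeta)$, where $(\mathcal{K},\tau,\zeta)$ is the GNS construction of $E$ with $\tau$ faithful, and the GNS construction of $\EE_p\circ\lambda_p$ is $(H_{p,p},\iota\circ\lambda_p,\xi_p)$. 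Uniqueness of the GNS construction then yields a unitary $U\,:\,H_{p,p}\to\mathcal{K}$ with $U\xi_p=\zeta$ and $U\iota(\lambda_p(x))=\tau(\pi(x))U$ for all $x\in P$; comparing norms and using that $\iota$ and $\tau$ are isometric gives $\|\lambda_p(x)\|=\|\pi(x)\|$ for every $x\in P$, whence $\ker\lambda_p=\ker\pi$, and $\lambda_p(x)\mapsto\pi(x)$ is the desired unital $*$-isomorphism $\nu\,:\,P_p\to C$, unique with $\nu\circ\lambda_p=\pi$ because $\lambda_p$ is onto. Finally $E\circ\nu=\EE_p$ is read off from $E\circ\pi=\EE_p\circ\lambda_p$.

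The only genuinely non-formal ingredient is the combinatorial bookkeeping in (2) — checking that reduced operators push $\xi_p$ into $\bigoplus_{w\neq\emptyset}H_w$ and that $\xi_p$ is cyclic — which is where the internal tensor product structure of the $H_w$ and the precise definition of the $u^p_e$ enter. This is exactly parallel to the amalgamated free product case of \cite[Proposition~2.4]{FG15}, and all the underlying computations already appear in \cite{FF13}; for that reason I would present the argument only in outline.
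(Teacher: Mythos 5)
Your proposal is correct and takes essentially the same route as the paper: you define $\EE_p(x)=\langle\xi_p,x\xi_p\rangle$, verify that reduced operators push $\xi_p$ into $\bigoplus_{w\neq\emptyset}H_w$ and that $\xi_p$ is cyclic so that $(H_{p,p},\id,\xi_p)$ is a GNS construction of $\EE_p$, and obtain $\nu$ in (3) from the unitary intertwining the two GNS constructions (the paper writes $U(\lambda_p(x)\xi_p)=\pi(x)\eta$ directly, which is the same unitary you extract from uniqueness of GNS after checking $E\circ\pi=\EE_p\circ\lambda_p$). The only cosmetic differences are that you prove (1) directly on the summand $H_\emptyset$ instead of deducing it from (2), and you spell out the orthogonality/cyclicity computation that the paper cites from \cite{FF13}.
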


\begin{proof}
Assertion $(1)$ follows from assertion $(2)$, since $\EE_p\circ\lambda_p(a)=a$ for all $a\in A_p$. Let us sketch the proof of assertion $(2)$. Define $\xi_p=1_{A_p}\in A_p\subset H_{p,p}$ and $\EE_p(x)=\langle \xi_p,x\xi_p\rangle$ for all $x\in P_p$. Then $\EE_p\,:\, P_p\rightarrow A_p$ is a ucp map and, for all $a\in A_p$, $\EE_p(\lambda_p(a))=\langle 1_{A_p},L_{A_p}(a)1_{A_p}\rangle=a$. Repeating the proof of \cite[Proposition 3.18]{FF13}, we see that $\overline{P_p\xi_p\cdot A_p}=H_{p,p}$ and, for any reduced operator $a\in A_p$, one has $\langle \xi_p,a\xi_p\rangle=0$. It follows that the triple $(H_{p,p},\id,\xi_p)$ is a GNS-construction of $\EE_p$ (in particular $\EE_p$ is GNS-faithful) and $\EE_p(\lambda_p(x))=0$ for any reduced operator $x\in P$ from $p$ to $p$, since the map $\lambda_p$ sends reduced operators in $P$ from $p$ to $p$ to reduced operators in $P_p$.

\vspace{0.2cm}

\noindent The proof of $(3)$ is a routine. Since $E$ is GNS-faithful on $C$ we may and will assume that $C\subset\mathcal{L}_{A_p}(K)$, where $(K,\id,\eta)$ is a GNS-construction of $E$. By the properties of $E$ and $\EE_p$, the operator $U\,:\, H_{p,p}\rightarrow K$ defined by $U(\lambda_p(x)\xi_p)=\pi(x)\eta$ for all $x\in P$ reduced operator from $p$ to $p$ or $x\in A_p\subset P$ extends to a unitary operator $U\in\mathcal{L}_{A_p}(H_{p,p},K)$. By the definition of $U$, the map $\nu(x)=UxU^*$, for $x\in P_p$, does the job. The uniqueness is obvious.
\end{proof}

\noindent\textit{Notation.} We sometimes write $\EE_{p}^\Gr=\EE_{p}$.

\vspace{0.2cm}

\noindent For a connected subgraph $\mathcal{G}'\subset\mathcal{G}$ with a maximal subtree $\T'\subset\Gr'$ such that $\T'\subset\T$ we denote by $P_{\Gr'}$ the maximal fundamental C*-algebra of our graph of C*-algebras restricted to $\Gr'$ with respect to the maximal subtree $\mathcal{T}'$. By the universal property there exists a unique unital $*$-homomorphism $\pi_{\Gr'}\,:\,P_{\Gr'}\rightarrow P$ such that $\lambda_{\Gr'}(a)=a$ for all $a\in A_p$, $p\in V(\Gr')$ and $\pi_{\Gr'}(u_e)=u_e$ for all $e\in E(\Gr')$. The following Corollary says that we have a canonical identification of $P_p^{\Gr'}$ with the sub-C*-algebra of $P_p$ generated by $A_p$ and the reduced operators from $p$ to $p$ with associated path in $\Gr'$.

\begin{proposition}\label{Cor-Univ-pred}
For all $p\in V(\Gr')$, there exists unique faithful $*$-homomorphism $\pi_p^{\Gr'}\,:\, P_p^{\Gr'}\rightarrow P_p$ such that $\pi_p^{\Gr'}\circ\lambda_p^{\Gr'}=\lambda_p\circ\pi_{\Gr'}$. The morphism $\pi_{p}^{\Gr'}$ satisfies $\EE_p\circ\pi_p^{\Gr'}=\EE_p^{\Gr'}$. Moreover, there exists a unique ucp map $\EE_p^{\Gr'}\,:\,P_{p}\rightarrow P_{p}^{\Gr'}$ such that $\EE_p^{\Gr'}\circ\pi_p^{\Gr'}=\id$ and $\EE_p^{\Gr'}(\lambda_p(a))=0$ for all $a\in P$ a reduced operator from $p$ to $p$ with associated path containing at least one vertex which is not in $\Gr'$.
\end{proposition}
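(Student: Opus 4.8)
The plan is to construct $\pi_p^{\Gr'}$ using the universal property in Proposition \ref{Prop-pvertex}(3), with $P$ replaced by $P_{\Gr'}$ and $C := \lambda_p\circ\pi_{\Gr'}(P_{\Gr'})\subset P_p$. First I would observe that $\pi := \lambda_p\circ\pi_{\Gr'}\,:\,P_{\Gr'}\rightarrow C$ is a surjective unital $*$-homomorphism, and that the restriction of $\EE_p$ to $C$, call it $E$, is a ucp map $C\rightarrow A_p$. By Remark \ref{rem:redmax}, $\lambda_p\circ\pi_{\Gr'}$ sends a reduced operator $a_0u_{e_1}\dots u_{e_n}a_n\in P_{\Gr'}$ (whose associated path lies in $\Gr'$) to the corresponding reduced operator in $P_p$ with path in $\Gr'$; applying Proposition \ref{Prop-pvertex}(2) for the graph $\Gr$ gives $E(\pi(a)) = \EE_p(\lambda_p\circ\pi_{\Gr'}(a)) = 0$, and $E\circ\pi$ is the identity on $A_p$. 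The only nontrivial hypothesis to verify is that $E$ is GNS-faithful on $C$: this should follow because the GNS construction of $\EE_p$ is $(H_{p,p},\id,\xi_p)$ (by the proof of Proposition \ref{Prop-pvertex}(2)), so the GNS construction of $E$ is realized on the closed submodule $\overline{C\xi_p\cdot A_p}\subset H_{p,p}$, on which $C$ acts faithfully — indeed any $x\in C$ killing $\overline{C\xi_p\cdot A_p}$ in particular satisfies $\langle\xi_p,x^*x\xi_p\rangle = \EE_p(x^*x) = 0$ and, more strongly, $x\overline{C\xi_p\cdot A_p}=0$ forces $x=0$ since $C$ acts on $H_{p,p}$ and a similar Pimsner-type computation on reduced words shows the submodule is invariant and "separating". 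Once GNS-faithfulness is in hand, Proposition \ref{Prop-pvertex}(3) applied to $(\Gr',\T')$ produces a unique unital $*$-isomorphism $\pi_p^{\Gr'}\,:\,P_p^{\Gr'}\rightarrow C\subset P_p$ with $\pi_p^{\Gr'}\circ\lambda_p^{\Gr'} = \pi = \lambda_p\circ\pi_{\Gr'}$ and $E\circ\pi_p^{\Gr'} = \EE_p^{\Gr'}$, which unwinds to $\EE_p\circ\pi_p^{\Gr'} = \EE_p^{\Gr'}$. Faithfulness is automatic since it is an isomorphism onto its image.

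For the last assertion, the conditional-expectation-type map $\EE_p^{\Gr'}\,:\,P_p\rightarrow P_p^{\Gr'}$, I would again realize everything spatially on $H_{p,p}$. Identify $P_p^{\Gr'}$ with its image $C$ acting on $H_{p,p}$; the submodule $H^{\Gr'} := \overline{C\xi_p\cdot A_p}\subset H_{p,p}$ is, by the explicit description of $H_{p,p}=\bigoplus_w H_w$, precisely the direct sum of the $H_w$ over paths $w$ from $p$ to $p$ contained in $\Gr'$ (together with $H_\emptyset = A_p$), and it is orthogonally complemented. Let $P_{\Gr'}^\perp\in\mathcal{L}_{A_p}(H_{p,p})$ be the orthogonal projection onto $H^{\Gr'}$, and define $\EE_p^{\Gr'}(x) := P_{\Gr'}^\perp\, x\, P_{\Gr'}^\perp$ for $x\in P_p$. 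One checks this lands in $C = P_p^{\Gr'}$: it suffices to check it on the dense set spanned by $A_p$ and reduced operators from $p$ to $p$, where the product formula of Lemma \ref{LemProduct} shows that compressing a reduced operator whose path leaves $\Gr'$ to $H^{\Gr'}$ gives zero (such an operator maps $H^{\Gr'}$ into the orthocomplement), while a reduced operator with path inside $\Gr'$ is already in $C$ and is fixed by the compression. That $\EE_p^{\Gr'}$ is ucp is clear (compression by a projection), $\EE_p^{\Gr'}\circ\pi_p^{\Gr'} = \id$ holds because $\pi_p^{\Gr'}$ is the inclusion $C\hookrightarrow P_p$ and $C$ preserves $H^{\Gr'}$ and acts on it as $\lambda_p^{\Gr'}$, and $\EE_p^{\Gr'}(\lambda_p(a)) = 0$ for reduced $a$ with a vertex outside $\Gr'$ is exactly the orthogonality statement just noted. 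Uniqueness follows from density of the span of $A_p$ and reduced operators in $P_p$ together with the prescribed values.

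The main obstacle I expect is verifying GNS-faithfulness of the compressed expectation $E$ on $C=P_p^{\Gr'}$ (equivalently, that $C$ acts faithfully on $\overline{C\xi_p\cdot A_p}$), since this is what makes the universal property in Proposition \ref{Prop-pvertex}(3) applicable and is not purely formal — it requires re-running the Pimsner-style argument from the proof of \cite[Proposition 3.18]{FF13}/Proposition \ref{Prop-pvertex}(2), but now internally to the submodule indexed by paths in $\Gr'$, checking that this submodule is invariant under $C$ and that the GNS triple for $E$ is the restricted one $(H^{\Gr'},\id|_C,\xi_p)$. Everything else is bookkeeping with Lemma \ref{LemProduct} and Remark \ref{rem:redmax}.
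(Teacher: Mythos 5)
Your proposal is correct and follows essentially the same route as the paper: the paper also obtains $\pi_p^{\Gr'}$ by applying the universal property of Proposition \ref{Prop-pvertex}(3) to $C=\lambda_p\circ\pi_{\Gr'}(P_{\Gr'})$ with $E=\EE_p\vert_C$, the whole point being GNS-faithfulness of $E$, and also defines $\EE_p^{\Gr'}$ as compression by the projection onto the submodule $H'_{p,p}=\bigoplus_{w\subset\Gr'}H_w$, using Lemma \ref{LemProduct} to see that reduced operators whose path leaves $\Gr'$ compress to zero. The one step you leave as the ``main obstacle'' (that the submodule is separating, i.e.\ GNS-faithfulness of $E$) is settled in the paper by the very same Lemma \ref{LemProduct} computation you invoke for the compression: since $\EE_p(x^*x)=0$ one may approximate $x^*x$ by sums of reduced operators with path in $\Gr'$, and for such $z$ and any reduced $a$ whose path leaves $\Gr'$ the product $a^*za$ is again a sum of reduced operators (no $A_p$-component), so $\EE_p(y^*x^*xy)=0$ for all $y$ in a total subset of $P_p$ and GNS-faithfulness of $\EE_p$ gives $x=0$.
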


\begin{proof}
The uniqueness of $\pi_p^{\Gr'}$ being obvious, let us show the existence. Define $P'=\pi_p^{\Gr'}\circ\lambda_p^{\Gr'}(P_{\Gr'})\subset P_p$ and let $E\,:\, P'\rightarrow A_p$ be the ucp map defined by $E=\EE_p\vert_{P'}$. By the universal property of Proposition \ref{Prop-pvertex}, assertion $3$, it suffices to check that $E$ is GNS-faithul. Let $x\in P'$ such that $E(y^*x^*xy)=\EE_p(y^*x^*xy)=0$ for all $y\in P'$. In particular $\EE_p(x^*x)=0$ and we may and will assume that $x^*x$ is the image under $\lambda_p$ of a sum of reduced operators from $p$ to $p$ with associated vertices in $\Gr'$. Let us show that $x=0$. Since $\EE_p$ is GNS-faitful and since $P'$ contains the image under $\lambda_p$ of $A_p$ and of the reduced operators from $p$ to $p$ in $P$ whose associated path from is in $\Gr'$, it suffices to show that $\EE_p(y^*x^*xy)=0$ for all $y=\lambda_p(a)$, where $a\in P$ is a reduced operator from $p$ to $p$ whose associated path contains at least one vertex which is not in $\Gr'$. It follows easily from Lemma \ref{LemProduct} since this Lemma implies that, for all $z\in P$ a reduced operator from $p$ to $p$ with all edges in $\Gr'$ or $z\in A_p$ and for all $a\in P$ a reduced operator from $p$ to $p$ with at least one vertex which is not in $\Gr'$, the product $a^*za$ is equal to a sum of reduced operators from $p$ to $p$ with at least one vertex which is not in $\Gr'$. In particular, $\EE_p(\lambda_p(a^*za))=0$ for all such $a$ and $z$. Hence, $\EE_p(yx^*xy)=0$ for all $y\in P_p$. By construction, $\pi_{p}^{\Gr'}$ satisfies $\EE_p\circ\pi_p^{\Gr'}=\EE_p^{\Gr'}$. Let us now construct the ucp map $\EE_p^{\Gr'}$ (the uniqueness is obvious).

\vspace{0.2cm}

\noindent Let $H_{p,p}'=\underset{\omega\text{ a path in }\Gr'\text{ from }p\text{ to }p}{\bigoplus}H_\omega\subset H_{p,p}$. By convention the sum also contains the empty path for which $H_\emptyset=A_{p}$. Observe that $H'_{p,p}$ is a complemented Hilbert sub-$A_{p}$-module of $H_{p,p}$. Let $Q\in\mathcal{L}_{A_{p}}(H_{p,p})$ be the orthogonal projection onto $H'_{p,p}$ and define the ucp map $\EE_p^{\Gr'}\,:\,P_{p}\rightarrow \mathcal{L}_{A_{p}}(H'_{p,p})$ by $\EE_p^{\Gr'}(x)=QxQ$.

\vspace{0.2cm}

\noindent Since $xH'_{p,p}\subset H'_{p,p}$ for all $x\in P_{p}^{\Gr'}$, the projection $Q$ commutes with every $x\in P_{p}^{\Gr'}$. Hence, after the identification $P_{p}^{\Gr'}\subset P_{p}$, we have $\EE_p^{\Gr'}(x)=x$ for all $x\in P_{p}^{\Gr'}$.

\vspace{0.2cm}

\noindent Let $a=a_0u_{e_1}\dots u_{e_n}a_n\in P$ be a reduced operator with $\omega=(e_1,\dots e_n)$ a path in $\Gr$ from $p$ to $p$ such that $e_k\notin E(\Gr')$ for some $1\leq k\leq n$. Observe that, by Lemma \ref{LemProduct}, for all $b\in P$ a reduced operator from $p$ to $p$ with associated path in $\Gr'$ or for $b\in A_{p}$ the product $ab$ is a sum of reduced operators from $p$ to $p$ whose associated path has at least one edge from $\Gr'$. Hence, $\lambda_p(ab)\xi_p\in H_{p,p}\ominus H'_{p,p}$ (where $\xi_p=1_{A_p}\in H_{p,p}$). It follows now easily from this observation that $Q\lambda_p(a)Q\lambda_p(b)\xi_p=0$ for all $b\in P$ a reduced operator from $p$ to $p$ or $b\in A_{p}$. Hence, $Q\lambda_p(a)Q=0$ and this concludes the proof.
\end{proof}
\noindent The following definition is not contained in \cite{FF13}. It is the correct version of the reduced fundamental C*-algebra in the case of non GNS-faithful conditional expectations in order to obtain the K-equivalence with the full fundamental C*-algebra. It is the main contribution of this preliminary section to the general theory of fundamental C*-algebras developed in \cite{FF13}.
\begin{definition} The \textit{vertex-reduced fundamental C*-algebra} $P_{{\rm vert}}$ is the C*-algebra obtained by separation completion of $P$ for the C*-semi-norm $\Vert x\Vert_v=\text{Sup}\{\Vert\lambda_p(x)\Vert\,:\,p\in V(\Gr)\}$ on $P$.
\end{definition}
\noindent We sometimes write $P_{\rm vert}^\Gr=P_{{\rm vert}}$. We will denote by $\lambda\,:\,P\rightarrow P_{{\rm vert}}$ (or $\lambda_\Gr$) the canonical surjection. Note that, by construction of $P_{{\rm vert}}$, for all $p\in V(\Gr)$, there exists a unique unital (surjective) $*$-homomorphism $\lambda_{v,p}\,:\,P_{{\rm vert}}\rightarrow P_p$ such that $\lambda_{v,p}\circ\lambda=\lambda_p$. We sometimes write $\lambda_{v,p}^\Gr=\lambda_{v,p}$. We describe the fundamental properties of $P_{{\rm vert}}$ in the following Proposition. We call a family of ucp maps $\{\varphi_i\}_{i\in I}$, $\varphi_i\,:\,A\rightarrow B_i$ GNS-faithful if $\cap_{i\in I}{\rm Ker}(\pi_i)=\{0\}$, where $(H_i,\pi_i,\xi_i)$ is a GNS-construction for $\varphi_i$.

\begin{proposition}\label{Prop-vertex}
The following holds.
\begin{enumerate}
\item The morphism $\lambda$ is faithful on $A_p$ for all $p\in V(\Gr)$.
\item For all $p\in V(\Gr)$, there exists a unique ucp map $\EE_{A_p}\,:\, P_{\rm vert}\rightarrow A_p$ such that $\EE_{A_p}\circ\lambda(a)=a$ for all $a\in A_p$ and all $p\in V(\Gr)$ and, 
$$\EE_{A_p}(\lambda_v(a_0u_{e_1}\dots u_{e_n} a_n))=0\text{ for all }a=a_0u_{e_1}\dots u_{e_n} a_n\in P\text{ a reduced operator from }p\text{ to }p.$$
Moreover, the family $\{\EE_{A_p}\,:\,p\in V(\Gr)\}$ is GNS-faithful.
\item Suppose that $C$ is a unital C*-algebra with a surjective unital $*$-homomorphism $\pi\,:\,P\rightarrow C$ and with ucp maps $E_{A_p}\,:\,C\rightarrow A_p$, for $p\in V(\Gr)$, such that $E_{A_p}\circ\pi(a)=a$ for all $a\in A_p$, all $p\in V(\Gr)$ and,
$$E_{A_p}(\pi(a_0u_{e_1}\dots u_{e_n} a_n))=0\text{ for all }a=a_0u_{e_1}\dots u_{e_n} a_n\in P\text{ a reduced operator from }p\text{ to }p$$
and the family $\{E_{A_p}\,:\,p\in V(\Gr)\}$ is GNS-faithful. Then, there exists a unique unital $*$-isomorphism $\nu\,:\, P_{{\rm vert}}\rightarrow C$ such that $\nu\circ\lambda=\pi$. Moreover, $\nu$ satisfies $E\circ\nu=\EE_p$ for all $p\in V(\Gr)$.
\end{enumerate}
\end{proposition}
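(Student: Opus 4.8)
The plan is to deduce all three assertions from the corresponding properties of the $p$-reduced algebras in Proposition \ref{Prop-pvertex}, via the defining surjections $\lambda_{v,p}\,:\,P_{\rm vert}\rightarrow P_p$ satisfying $\lambda_{v,p}\circ\lambda=\lambda_p$. Assertion (1) is immediate: $\lambda_p=\lambda_{v,p}\circ\lambda$ is faithful on $A_p$ by Proposition \ref{Prop-pvertex}(1), hence so is $\lambda$. For (2) I would set $\EE_{A_p}:=\EE_p\circ\lambda_{v,p}$; the two required identities follow at once from the properties of $\EE_p$ in Proposition \ref{Prop-pvertex}(2) together with $\lambda_{v,p}\circ\lambda=\lambda_p$, and uniqueness holds because $A_p$ and the reduced operators from $p$ to $p$ span a dense subalgebra of $P$ (Remarks \ref{rem:pathmax} and \ref{rem:redmax}), so their images span a dense subspace of $P_{\rm vert}$.

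For the GNS-faithfulness of the family $\{\EE_{A_p}\}$, the key observation is that, since $\EE_p$ has GNS triple $(H_{p,p},\mathrm{id},\xi_p)$ by Proposition \ref{Prop-pvertex}(2) and $\lambda_{v,p}$ is onto, the triple $(H_{p,p},\lambda_{v,p},\xi_p)$ is a GNS construction for $\EE_{A_p}=\EE_p\circ\lambda_{v,p}$; hence the kernel of the GNS representation of $\EE_{A_p}$ is exactly $\mathrm{Ker}(\lambda_{v,p})$. It then remains to check that $\bigcap_{p\in V(\Gr)}\mathrm{Ker}(\lambda_{v,p})=\{0\}$, which holds because the norm on $P_{\rm vert}$ is $\Vert y\Vert=\sup_p\Vert\lambda_{v,p}(y)\Vert$: this is true on the dense subspace $\lambda(P)$ by the very definition of the seminorm $\Vert\cdot\Vert_v$ (using $\lambda_{v,p}\circ\lambda=\lambda_p$), and it extends to all of $P_{\rm vert}$ by a routine approximation argument.

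The substance of the proof is assertion (3), where the difficulty is that we are only given GNS-faithfulness of the \emph{family} $\{E_{A_p}\}$, not of each $E_{A_p}$ individually, so Proposition \ref{Prop-pvertex}(3) does not apply directly to $C$. I would localize at each vertex: for fixed $p$, let $(K_p,\mathrm{id},\eta_p)$ be the GNS construction of $E_{A_p}\,:\,C\rightarrow A_p$, let $\sigma_p\,:\,C\rightarrow\mathcal{L}_{A_p}(K_p)$ be the associated representation, and set $C_p:=\sigma_p(C)$, a unital C*-algebra, with $E_p\,:\,C_p\rightarrow A_p$ given by $E_p(\sigma_p(c))=\langle\eta_p,\sigma_p(c)\eta_p\rangle=E_{A_p}(c)$. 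By construction $(K_p,\mathrm{id},\eta_p)$ is a GNS construction for $E_p$, so $E_p$ is GNS-faithful. Now $\sigma_p\circ\pi\,:\,P\rightarrow C_p$ is a surjective unital $*$-homomorphism, and $E_p\circ(\sigma_p\circ\pi)$ is the identity on $A_p$ and vanishes on reduced operators from $p$ to $p$ by the hypotheses on $\pi$ and $E_{A_p}$; hence Proposition \ref{Prop-pvertex}(3) yields a $*$-isomorphism $\nu_p\,:\,P_p\rightarrow C_p$ with $\nu_p\circ\lambda_p=\sigma_p\circ\pi$ and $E_p\circ\nu_p=\EE_p$. In particular $\Vert\lambda_p(x)\Vert=\Vert\sigma_p(\pi(x))\Vert$ for every $x\in P$.

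Taking the supremum over $p$ and using that GNS-faithfulness of $\{E_{A_p}\}$ means $\bigcap_p\mathrm{Ker}(\sigma_p)=\{0\}$, i.e.\ $\Vert c\Vert_C=\sup_p\Vert\sigma_p(c)\Vert$, I obtain $\Vert x\Vert_v=\sup_p\Vert\lambda_p(x)\Vert=\sup_p\Vert\sigma_p(\pi(x))\Vert=\Vert\pi(x)\Vert_C$ for all $x\in P$. Therefore $\pi$ factors through $\lambda$ as an isometric map $\lambda(P)\rightarrow C$, $\lambda(x)\mapsto\pi(x)$, which extends to an isometric unital $*$-homomorphism $\nu\,:\,P_{\rm vert}\rightarrow C$; it is onto since $\pi$ is, hence a $*$-isomorphism, and it is the unique one with $\nu\circ\lambda=\pi$ by density of $\lambda(P)$. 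Finally $E_{A_p}\circ\nu=\EE_{A_p}$ by evaluating on $\lambda(P)$: $E_{A_p}(\nu(\lambda(x)))=E_{A_p}(\pi(x))=E_p(\nu_p(\lambda_p(x)))=\EE_p(\lambda_p(x))=\EE_p(\lambda_{v,p}(\lambda(x)))=\EE_{A_p}(\lambda(x))$, and then by continuity on all of $P_{\rm vert}$. The only genuinely delicate point is the localization step in (3); everything else is bookkeeping with the universal properties already established.
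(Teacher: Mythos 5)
Your proposal is correct and takes essentially the same approach as the paper: you obtain (1) and (2) by setting $\EE_{A_p}=\EE_p\circ\lambda_{v,p}$ and identifying $\lambda_{v,p}$ with the GNS representation of $\EE_{A_p}$, and you prove (3) by localizing at each vertex via the GNS representations of the $E_{A_p}$, applying the universal property of Proposition \ref{Prop-pvertex} to each image $\sigma_p(C)\cong P_p$, and comparing sup-norms with the defining norm of $P_{\rm vert}$ --- which is precisely the argument the paper delegates to \cite[Proposition 2.8]{FG15}. The only difference is that you spell out the details the paper cites.
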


\begin{proof}
$(1)$. It follows from $(2)$ since $\EE_{A_p}\circ\lambda(a)=a$ for all $a\in A_p$ and all $p\in V(\Gr)$.

\vspace{0.2cm}

\noindent$(2)$. By Proposition \ref{Prop-pvertex}, the maps $\EE_{A_p}=\EE_p\circ\lambda_{v,p}$ satisfy the desired properties and it suffices to check that the family $\{\EE_{A_p}\,:\,p\in V(\Gr)\}$ is GNS-faithful. This is done exactly as in the proof of assertion $(2)$ of \cite[Proposition 2.8]{FG15}.

\vspace{0.2cm}

\noindent$(3)$. The proof is the same as the proof of assertion $(3)$ of \cite[Proposition 2.8]{FG15}, by using the universal property stated in Proposition \ref{Prop-pvertex} and the definition of $P_{{\rm vert}}$.
\end{proof}
\noindent\textit{Notation.} We sometimes write $\EE_{A_p}^\Gr=\EE_{A_p}$.

\begin{proposition}\label{Cor-vertId}
Let $\Gr'\subset \Gr$ be a connected subgraph with maximal subtree $\mathcal{T}'\subset\mathcal{T}$. There exists a unique faithful $*$-homomorphism $\pi_{{\rm vert}}^{\Gr'}\,:\, P_{{\rm vert}}^{\Gr'}\rightarrow P_{{\rm vert}}$ such that $\pi_{{\rm vert}}^{\Gr'}\circ\lambda_{\Gr'}=\lambda\circ\pi_{\Gr'}$. The morphism $\pi_{{\rm vert}}^{\Gr'}$ satisfies $\EE_p\circ\pi_p^{\Gr'}=\EE_p^{\Gr'}$ for all $p\in V(\Gr)$. Moreover, there exists a unique ucp map $\EE_{\Gr'}\,:\,P_{{\rm vert}}\rightarrow P_{{\rm vert}}^{\Gr'}$ such that $\lambda_{v,p}^{\Gr'}\circ \EE_{\Gr'}=\EE_p^{\Gr'}\circ\lambda_{v,p}$ for all $p\in V(\Gr')$.
\end{proposition}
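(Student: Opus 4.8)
This is the vertex-reduced counterpart of Proposition~\ref{Cor-Univ-pred}, and I would establish it in three steps: the existence and faithfulness of $\pi_{\rm vert}^{\Gr'}$; the compatibility with the expectations; and the construction and uniqueness of $\EE_{\Gr'}$. Uniqueness of $\pi_{\rm vert}^{\Gr'}$ is clear, since $P_{\rm vert}^{\Gr'}$ is generated by $\lambda_{\Gr'}(A_q)$ ($q\in V(\Gr')$) and $\lambda_{\Gr'}(u_e)$ ($e\in E(\Gr')$), on which the relation $\pi_{\rm vert}^{\Gr'}\circ\lambda_{\Gr'}=\lambda\circ\pi_{\Gr'}$ fixes the value.

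For existence, everything reduces to the norm identity $\|\pi_{\Gr'}(x)\|_v=\|x\|_v^{\Gr'}$ for $x\in P_{\Gr'}$, where $\|\cdot\|_v$ and $\|\cdot\|_v^{\Gr'}$ denote the defining C*-semi-norms of $P_{\rm vert}$ and $P_{\rm vert}^{\Gr'}$; this identity says precisely that $\lambda\circ\pi_{\Gr'}$ descends to an isometric, hence faithful, $*$-homomorphism $\pi_{\rm vert}^{\Gr'}$ with the required property. One inequality is immediate: for $q\in V(\Gr')$, Proposition~\ref{Cor-Univ-pred} gives $\lambda_q\circ\pi_{\Gr'}=\pi_q^{\Gr'}\circ\lambda_q^{\Gr'}$ with $\pi_q^{\Gr'}$ faithful, so $\|\lambda_q(\pi_{\Gr'}(x))\|=\|\lambda_q^{\Gr'}(x)\|$, and taking the supremum over $q\in V(\Gr')\subseteq V(\Gr)$ yields $\|\pi_{\Gr'}(x)\|_v\geqslant\|x\|_v^{\Gr'}$.

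The hard part is the reverse estimate, namely $\|\lambda_p(\pi_{\Gr'}(x))\|\leqslant\|x\|_v^{\Gr'}$ for $p\in V(\Gr)\setminus V(\Gr')$, which is a Hilbert-module incarnation of Serre's devissage of the tree. Here I would argue as follows. Since $\mathcal T'$ is a subtree of $\mathcal T$, an elementary argument shows that the $\mathcal T$-geodesics joining $p$ to the vertices of $\mathcal T'$ all reach $\mathcal T'$ at one and the same vertex $q_1\in V(\Gr')$; hence the unitary $u^p_{g_{q_1p}}\colon H_{p,p}\to H_{q_1,p}$ conjugates $\lambda_p\circ\pi_{\Gr'}$ into a representation $\sigma$ of $P_{\Gr'}$ on $H_{q_1,p}$ acting, on $A_q$ for $q\in V(\Gr')$, as $\mathrm{Ad}(u^p_{g_{q_1q}})\circ\lambda_{q,p}$ (one uses that the path $g_{q_1p}\,g_{pq}$ reduces to the geodesic $g_{q_1q}\subseteq\mathcal T'$). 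Splitting each path $w$ from $q_1$ to $p$ in $\Gr$ as $w=w'w''$, with $w'$ the maximal initial segment all of whose edges lie in $E(\Gr')$, $v'\in V(\Gr')$ its terminal vertex, and $w''$ having first edge outside $E(\Gr')$, the associativity of the internal tensor product gives $H_w\cong H_{w'}\otimes_{A_{v'}}H_{w''}$, and by the very definitions of the operators $u_e^p$ and $\lambda_{q,p}$ the algebra $\sigma(P_{\Gr'})$ acts only on the $H_{w'}$ factor. Summing over $w$, and after an elementary identity among the geodesic operators $u_e^{v'}$ of $\Gr'$, one identifies $\sigma$ with a direct sum of amplifications of unitary conjugates of the reduced representations $\lambda_{v'}^{\Gr'}$ of $P_{\Gr'}$, $v'$ ranging over a subset of $V(\Gr')$. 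Therefore $\|\lambda_p(\pi_{\Gr'}(x))\|=\|\sigma(x)\|\leqslant\sup_{v'\in V(\Gr')}\|\lambda_{v'}^{\Gr'}(x)\|=\|x\|_v^{\Gr'}$, which together with the easy inequality proves the norm identity and hence the existence of the faithful morphism $\pi_{\rm vert}^{\Gr'}$. This devissage is the technical heart of the proof, and is carried out by the same explicit manipulations of the operators $u_e^p$ as in \cite{FF13}.

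It remains to check the last two assertions. On generators one has $\lambda_{v,p}\circ\pi_{\rm vert}^{\Gr'}=\pi_p^{\Gr'}\circ\lambda_{v,p}^{\Gr'}$ for $p\in V(\Gr')$ (both sides send $\lambda_{\Gr'}(u_e)$ to $w_e^p$ and $\lambda_{\Gr'}(a)$ to $\pi_{q,p}(a)$, by Proposition~\ref{Cor-Univ-pred}), so, using $\EE_{A_p}=\EE_p\circ\lambda_{v,p}$, $\EE_{A_p}^{\Gr'}=\EE_p^{\Gr'}\circ\lambda_{v,p}^{\Gr'}$ and the identity $\EE_p\circ\pi_p^{\Gr'}=\EE_p^{\Gr'}$ of Proposition~\ref{Cor-Univ-pred}, we get $\EE_{A_p}\circ\pi_{\rm vert}^{\Gr'}=\EE_{A_p}^{\Gr'}$ for all $p\in V(\Gr')$. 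For $\EE_{\Gr'}$, I would embed $P_{\rm vert}^{\Gr'}$ isometrically into $\prod_{q\in V(\Gr')}P_q^{\Gr'}$ via $\iota\colon y\mapsto(\lambda_{v,q}^{\Gr'}(y))_q$ --- injective, hence isometric with closed range, by the definition of $P_{\rm vert}^{\Gr'}$ --- and consider the ucp map $\Psi\colon P_{\rm vert}\to\prod_{q\in V(\Gr')}P_q^{\Gr'}$, $\Psi(z)=\big(\EE_q^{\Gr'}(\lambda_{v,q}(z))\big)_q$, where $\EE_q^{\Gr'}\colon P_q\to P_q^{\Gr'}$ is the map of Proposition~\ref{Cor-Univ-pred}. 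A routine computation on the dense $*$-subalgebra of $P_{\rm vert}$ spanned by $\lambda(A_{p_0})$ and the images of the reduced operators from $p_0$ to $p_0$, for a fixed $p_0\in V(\Gr')$ --- using that $\EE_q^{\Gr'}\circ\lambda_q$ restricts to $\lambda_q^{\Gr'}$ on operators with path in $\Gr'$ (because $\EE_q^{\Gr'}\circ\pi_q^{\Gr'}=\id$) and vanishes on reduced operators whose path uses an edge outside $E(\Gr')$ (as in the proof of Proposition~\ref{Cor-Univ-pred}, via Lemma~\ref{LemProduct}) --- shows that $\Psi(P_{\rm vert})\subseteq\iota(P_{\rm vert}^{\Gr'})$. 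Then $\EE_{\Gr'}:=\iota^{-1}\circ\Psi$ is a ucp map satisfying $\lambda_{v,p}^{\Gr'}\circ\EE_{\Gr'}=\EE_p^{\Gr'}\circ\lambda_{v,p}$ for every $p\in V(\Gr')$, and it is unique because the maps $\lambda_{v,p}^{\Gr'}$, $p\in V(\Gr')$, are jointly faithful on $P_{\rm vert}^{\Gr'}$.
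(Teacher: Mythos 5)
Your proposal is correct, and for the main point — existence and faithfulness of $\pi_{\rm vert}^{\Gr'}$ — it takes a genuinely different route from the paper. You prove the norm identity $\Vert\pi_{\Gr'}(x)\Vert_v=\Vert x\Vert_v^{\Gr'}$ directly, the hard inequality coming from a geometric devissage of $H_{p,p}$ for $p\notin V(\Gr')$: the gate vertex $q_1$, the head--tail splitting $H_w\cong H_{w'}\underset{A_{v'}}{\ot}H_{w''}$ (legitimate because the first tail edge is never the inverse of a head edge, $E(\Gr')$ being closed under inversion), and the identification of $\lambda_p\circ\pi_{\Gr'}$ with amplified unitary conjugates of the $\lambda^{\Gr'}_{v'}$, using backtracking cancellation $u^{v'}_{\overline{e}}u^{v'}_e=1$ to reduce concatenated geodesic operators. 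The paper avoids all of this: it applies the universal property of Proposition \ref{Prop-vertex}, assertion (3), to $P'=\lambda\circ\pi_{\Gr'}(P_{\Gr'})$ with the restricted expectations $\EE_{A_p}\vert_{P'}$, so the only thing to verify is GNS-faithfulness of that family, which follows from Lemma \ref{LemProduct} exactly as in Proposition \ref{Cor-Univ-pred}. Your route is more self-contained and explains concretely why the $\Gr$-representations restricted to $\Gr'$-words reproduce the vertex-reduced norm of $\Gr'$, but the two steps you dispatch with ``by the very definitions'' and ``an elementary identity'' are exactly where the work lies and would have to be written out; the paper's route buys brevity by outsourcing this to the already-established universal property. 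For $\EE_{\Gr'}$, your $\prod_{q\in V(\Gr')}P_q^{\Gr'}$ construction is essentially the paper's compression $x\mapsto Q\nu(x)Q$ in different packaging; note only that the vanishing of $\EE_q^{\Gr'}\circ\lambda_q$ on reduced operators based at $p_0\neq q$ whose path uses an edge outside $E(\Gr')$ is not literally what Proposition \ref{Cor-Univ-pred} records (it treats operators based at $q$): one should first pad by the $\mathcal{T}'$-geodesic from $q$ to $p_0$ and reduce via Lemma \ref{LemProduct}, observing that an edge outside $E(\Gr')$ can never cancel against an edge of $\Gr'$, so the resulting reduced words still leave $\Gr'$ and the claimed vanishing holds.
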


\begin{proof}
Define $P'=\lambda\circ\pi_{\Gr'}(P_{\Gr'})\subset P_{{\rm vert}}$ and consider, for $p\in V(\Gr)$, the ucp map $E_{A_p}=\EE_{A_p}\vert_{P'}$. Using the universal property of Proposition \ref{Prop-vertex}, assertion $3$, it suffices to check that the family $\{E_{A_p}\,:\,p\in V(\Gr)\}$ is GNS-faithful. Let $x\in P'$ such that $E_{A_p}(y^*x^*xy)=0$ for all $y\in P'$ and all $p\in V(\Gr)$. Arguing as in the proof of Proposition \ref{Cor-Univ-pred} we find that $\EE_{A_p}(y^*x^*xy)=0$ for all $y\in P_{{\rm vert}}$ and all $p\in V(\Gr)$. Since the family $\{\EE_{A_p}\,:\,p\in V(\Gr)\}$ is GNS faithful, the family $\{E_{A_p}\,:\,p\in V(\Gr)\}$ is also GNS-faithful. The construction of the canonical ucp map $\EE_{\Gr'}\,:\,P_{{\rm vert}}\rightarrow P_{{\rm vert}}^{\Gr'}$ is similar to the construction made in the proof of Proposition \ref{Cor-Univ-pred}. Indeed, let $A=\bigoplus_{p\in V(\Gr)} A_p$ and consider the Hilbert $A$-module $\bigoplus_{p\in V(\Gr)}H_{p,p}$ with the (faithful) left action of $P_{{\rm vert}}$ given by $\nu=\bigoplus_{p\in V(\Gr)}\lambda_{v,p}$. As in the proof of Proposition \ref{Cor-Univ-pred}, given any $p\in V(\Gr')$, we identify the Hilbert module of path in $\Gr'$ from $p$ to $p$, with the canonical Hilbert $A_p$-submodule $H'_{p,p}\subset H_{p,p}$ and we also view $\bigoplus_{p\in V(\Gr')}H'_{p,p}\subset \bigoplus_{p\in V(\Gr)}H_{p,p}$ as a Hilbert $A$-submodule. Note that the left action $\bigoplus_{p\in V(\Gr')}\lambda_{v,p}^{\Gr'}$ of $P_{{\rm vert}}^{\Gr'}$ on $\bigoplus_{p\in V(\Gr')}H'_{p,p}$ is faithful so that we may and will view $P_{{\rm vert}}^{\Gr'}\subset\mathcal{L}_A(\bigoplus_{p\in V(\Gr')}H'_{p,p})$. Let $Q\in\mathcal{L}_A(\bigoplus_{p\in V(\Gr)}H_{p,p})$ be the orthogonal projection onto $\bigoplus_{p\in V(\Gr')}H'_{p,p}$. Then it is not difficult to check that the ucp map $x\mapsto Q\nu(x)Q$ has the desired properties.
\end{proof}

\begin{example}\label{ExHNN}
When the graph contains two edges: $e$ and its opposite $\overline{e}$ then either $s(e)\neq r(e)$ and the construction considered above is the vertex-reduced amalgamated free product studied in \cite[Section 2]{FG15} or $s(e)=r(e)$ and the construction above is the vertex-reduced HNN-extension. Let us reformulate in details below our construction in that specific case. Note that the \textit{edge-reduced HNN-extension} has been described in details in \cite{Fi13}.

\noindent Let $A,B$ be unital C*-algebras and, for $\epsilon\in\{-1,1\}$, let $\pi_\epsilon\,:\,B\rightarrow A$ be a unital faithful $*$-homomorphism and $E_\epsilon\,:\, A\rightarrow B$ a ucp map such that $E_\epsilon\circ\pi_\epsilon=\id_B$. The full HNN-extension is the universal unital C*-algebra generated by $A$ and a unitary $u$ such that $u\pi_{-1}(b)u^*=\pi_1(b)$ for all $b\in B$. We denote this C*-algebra by ${\rm HNN}(A,B,\pi_1,\pi_{-1})$. The (vertex) reduced HNN-extension $C$ is the unique, up to isomorphism, unital C*-algebra satisfying the following properties:
\begin{enumerate}
\item There exists a unital $*$-homomorphism $\rho\,:\, A\rightarrow C$ and a unitary $u\in C$ such that $u\rho(\pi_{-1}(b))u^*=\rho(\pi_1(b))$ for all $b\in B$ and $C$ is generated by $\rho(A)$ and $u$.
\item There exists a GNS-faithful ucp map $E\,:\, C\rightarrow A$ such that $E\circ\rho=\id_A$ and $E(x)=0$ for all $x\in C$ of the form $x=\rho(a_0)u^{\epsilon_1}\dots u_{\epsilon_n}\rho(a_n)$ where $n\geq 1$, $a_k\in A$ and $\epsilon_k\in\{-1,1\}$ are such that, for all $1\leq k\leq n-1$, $\epsilon_{k+1}=-\epsilon_k\implies E_{-\epsilon_k}(a_k)=0$.
\item If $D$ is a unital C*-algebra with a unital $*$-homomorphism $\nu\,:\, A\rightarrow D$, a unitary $v\in D$ and a GNS-faithful ucp map $E'\,:\, D\rightarrow A$ such that
\begin{itemize}
\item $v\nu(\pi_{-1}(b))v^*=\nu(\pi_1(b))$ for all $b\in B$ and $D$ is generated by $\nu(A)$ and $v$.
\item $E'\circ\nu=\id_A$ and $E'(x)=0$ for all $x\in D$ of the form $x=\nu(a_0)v^{\epsilon_1}\dots v^{\epsilon_n}\nu(a_n)$ with $n\geq 1$, $\epsilon_k\in\{-1,1\}$, $a_k\in A$ such that, for all $1\leq k\leq n-1$ one has $\epsilon_{k+1}=-\epsilon_k\implies E_{-\epsilon_k}(a_k)=0$.
\end{itemize}
Then there exists a unique unital $*$-homomorphism $\widetilde{\nu}\,:\, C\rightarrow D$ such that $\widetilde{\nu}\circ\rho=\nu$ and $\widetilde{\nu}(u)=v$. Moreover, $E'\circ\widetilde{\nu}=E$. We denote this C*-algebra by ${\rm HNN}_{\rm vert}(A,B,\pi_1,\pi_{-1})$
\end{enumerate}
\end{example}

\noindent We now describe the \textit{Serre's devissage} process for our vertex-reduced fundamental C*-algebras.

\vspace{0.2cm}

\noindent For $e\in E(\Gr)$, let $\Gr_e$ be the graph obtained from $\Gr$ by removing the edges $e$ and $\overline{e}$ i.e., $V(\Gr_e)=V(\Gr)$ and $E(\Gr_e)=E(\Gr)\setminus\{e,\overline{e}\}$. The source range and inverse maps are the restrictions of the one for $\Gr$. The \textit{Serre's devissage} shows that, when $\Gr_e$ is not connected, the vertex-reduced fundamental C*-algebra is a vertex-reduced amalgamated free product and, when $\Gr_e$ is connected, the vertex-reduced fundamental C*-algebra is a vertex-reduced HNN-extension. We shall use freely the notations and results of \cite[Section 2]{FG15} about vertex-reduced amalgamated free products.

\vspace{0.2cm}

\noindent\textit{Case 1: $\Gr_e$ is not connected.} Let $\Gr_{s(e)}$ (respectively $\Gr_{r(e)}$) the connected component of $s(e)$ (resp. $r(e)$) in $\Gr_e$. Since $\Gr_e$ is not connected $e\in E(\mathcal{T})$ and the graphs $\mathcal{T}_{s(e)}:=\mathcal{T}\cap\Gr_{s(e)}$ and $\mathcal{T}_{r(e)}:=\mathcal{T}\cap\Gr_{r(e)}$ are maximal subtrees of $\Gr_{s(e)}$ and $\Gr_{r(e)}$ respectively. Let $P_{\Gr_{s(e)}}$ and $P_{\Gr_{r(e)}}$ be the maximal fundamental C*-algebras of our graph of C*-algebras restricted to $\Gr_{s(e)}$ and $\Gr_{r(e)}$ respectively and with respect to the maximal subtrees $\mathcal{T}_{s(e)}$ and $\mathcal{T}_{r(e)}$ respectively. Recall that we have canonical maps $\pi_{\Gr_{s(e)}}\,:\, P_{\Gr_{s(e)}}\rightarrow P$ and $\pi_{\Gr_{r(e)}}\,:\, P_{\Gr_{r(e)}}\rightarrow P$.

\vspace{0.2cm}

\noindent Let $P_{\Gr_{s(e)}}\underset{B_e}{*} P_{\Gr_{r(e)}}$ be the full free product of $P_{\Gr_{s(e)}}$ and $P_{\Gr_{r(e)}}$ amalgamated over $B_e$ relative to the maps $s_e\,:\, B_e\rightarrow P_{\Gr_{s(e)}}$ and $r_e\,:\,B_e\rightarrow P_{\Gr_{r(e)}}$. Observe that, since $e\in E(\mathcal{T})$, we have $u_e=1\in P$. Hence, we have $s_e(b)=r_e(b)$ in $P$, for all $b\in B_e$. By the universal property of the full amalgamated free product there exists a unique unital $*$-homomorphism $\nu\,:\,P_{\Gr_{s(e)}}\underset{B_e}{*} P_{\Gr_{r(e)}}\rightarrow P$ such that $\nu\vert_{P_{\Gr_{s(e)}}}=\pi_{\Gr_{s(e)}}$ and $\nu\vert_{P_{\Gr_{r(e)}}}=\pi_{\Gr_{r(e)}}$. Moreover, by the universal property of $P$, there exists also a unital $*$-homomorphism $P\rightarrow P_{\Gr_{s(e)}}\underset{B_e}{*} P_{\Gr_{r(e)}}$ which is the inverse of $\nu$. Hence, $\nu$ is a $*$-isomorphism. Actually, this is also true at the vertex-reduced level.

\noindent Note that we have injective unital $*$-homomorphisms $\iota_{s(e)}=\lambda_{\Gr_{s(e)}}\circ s_e\,:\,B_e\rightarrow P_{{\rm vert}}^{\Gr_{s(e)}}$ and $\iota_{s(e)}=\lambda_{\Gr_{r(e)}}\circ r_e\,:\,B_e\rightarrow P_{{\rm vert}}^{\Gr_{r(e)}}$ and conditional expectations $E_{s(e)}=\lambda_{\Gr_{s(e)}}\circ E_e^s\circ\EE^{\Gr_{s(e)}}_{A_{s(e)}}$ from $P_{{\rm vert}}^{\Gr_{s(e)}}$ to $\iota_{s(e)}(B_e)$ and $E_{r(e)}=\lambda_{\Gr_{r(e)}}\circ E_e^r\circ\EE_{A_{r(e)}}^{\Gr_{r(e)}}$ from $P_{{\rm vert}}^{\Gr_{r(e)}}$ to $\iota_{r(e)}(B_e)$ so that we can perform the vertex-reduced amalgamated free product. Following \cite[Section 2]{FG15}, we write
$$\pi\,:\,P_{{\rm vert}}^{\Gr_{s(e)}}\underset{B_e}{*} P_{{\rm vert}}^{\Gr_{r(e)}}\rightarrow P_{{\rm vert}}^{\Gr_{s(e)}}\underset{B_e}{\overset{v}{*}} P_{{\rm vert}}^{\Gr_{r(e)}}$$
the canonical surjection for the full amalgamated free product to the vertex-reduced amalgamated free product and $\EE_1$ (resp. $\EE_2$) the canonical ucp map from $P_{{\rm vert}}^{\Gr_{s(e)}}\underset{B_e}{\overset{v}{*}} P_{{\rm vert}}^{\Gr_{r(e)}}$ to $ P_{{\rm vert}}^{\Gr_{s(e)}}$ (resp. to $P_{{\rm vert}}^{\Gr_{r(e)}}$).

\begin{lemma}\label{DevissageFree}
There exists a unique  $*$-isomorphism $\nu_e\,:\,P_{{\rm vert}}^{\Gr_{s(e)}}\underset{B_e}{\overset{v}{*}} P_{{\rm vert}}^{\Gr_{r(e)}}\rightarrow P_{{\rm vert}}$ such that:
$$\nu_e\circ\pi\circ\lambda_{\Gr_{s(e)}}=\lambda\circ\pi_{\Gr_{s(e)}}\quad\text{and}\quad\nu_e\circ\pi\circ\lambda_{\Gr_{r(e)}}=\lambda\circ\pi_{\Gr_{r(e)}}.$$
Moreover we have $\EE_{\Gr_{s(e)}}\circ\nu_e=\EE_1$ and  $\EE_{\Gr_{r(e)}}\circ\nu_e=\EE_2$.

\end{lemma}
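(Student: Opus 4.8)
The plan is to identify $P_{{\rm vert}}$ with $C:=P_{{\rm vert}}^{\Gr_{s(e)}}\underset{B_e}{\overset{v}{*}}P_{{\rm vert}}^{\Gr_{r(e)}}$ through the universal property of Proposition \ref{Prop-vertex}(3). Since $\Gr_e$ is disconnected, $V(\Gr)=V(\Gr_{s(e)})\sqcup V(\Gr_{r(e)})$. Because $e\in E(\T)$ we have $u_e=u_{\rev{e}}=1$ in $P$, so the natural homomorphism $P_{\Gr_{s(e)}}\underset{B_e}{*}P_{\Gr_{r(e)}}\rightarrow P_{{\rm vert}}^{\Gr_{s(e)}}\underset{B_e}{*}P_{{\rm vert}}^{\Gr_{r(e)}}$ induced by $\lambda_{\Gr_{s(e)}}$ and $\lambda_{\Gr_{r(e)}}$ is well defined (the two copies of $B_e$ being amalgamated on both sides, via $\iota_{s(e)}$ and $\iota_{r(e)}$); composing it with the isomorphism $P\cong P_{\Gr_{s(e)}}\underset{B_e}{*}P_{\Gr_{r(e)}}$ recalled above and with $\pi$ gives a surjective unital $*$-homomorphism $\Pi\,:\,P\rightarrow C$ satisfying $\Pi\circ\pi_{\Gr_{s(e)}}=\pi\circ\lambda_{\Gr_{s(e)}}$ and $\Pi\circ\pi_{\Gr_{r(e)}}=\pi\circ\lambda_{\Gr_{r(e)}}$. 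For $p\in V(\Gr)$ belonging to $\Gr_{s(e)}$ (resp. $\Gr_{r(e)}$) set $E_{A_p}:=\EE_{A_p}^{\Gr_{s(e)}}\circ\EE_1$ (resp. $E_{A_p}:=\EE_{A_p}^{\Gr_{r(e)}}\circ\EE_2$). The goal is to verify that $(C,\Pi,\{E_{A_p}\}_{p\in V(\Gr)})$ satisfies the three hypotheses of Proposition \ref{Prop-vertex}(3); this yields a unique $*$-isomorphism $\nu\,:\,P_{{\rm vert}}\rightarrow C$ with $\nu\circ\lambda=\Pi$, and one puts $\nu_e:=\nu^{-1}$.

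Two of the three hypotheses are routine. For $a\in A_p$ with $p\in V(\Gr_{s(e)})$, $\Pi(a)$ lies in the copy of $P_{{\rm vert}}^{\Gr_{s(e)}}$ in $C$, where it equals $\lambda_{\Gr_{s(e)}}(a)$; hence $\EE_1(\Pi(a))=\lambda_{\Gr_{s(e)}}(a)$ and $E_{A_p}(\Pi(a))=\EE_{A_p}^{\Gr_{s(e)}}(\lambda_{\Gr_{s(e)}}(a))=a$ by Proposition \ref{Prop-vertex}(2) for $\Gr_{s(e)}$ (and symmetrically on the other side). For the GNS-faithfulness of the family $\{E_{A_p}\}_p$: if $x\in C$ satisfies $E_{A_p}(y^*x^*xy)=0$ for all $y\in C$ and all $p$, then restricting to $p\in V(\Gr_{s(e)})$ and replacing $y$ by $ym$ with $m\in P_{{\rm vert}}^{\Gr_{s(e)}}\subset C$, the bimodule property of $\EE_1$ gives $\EE_{A_p}^{\Gr_{s(e)}}\big(m^*\,\EE_1(y^*x^*xy)\,m\big)=0$ for all such $m$ and $p$; since the family $\{\EE_{A_p}^{\Gr_{s(e)}}\}_{p\in V(\Gr_{s(e)})}$ is GNS-faithful on $P_{{\rm vert}}^{\Gr_{s(e)}}$ (Proposition \ref{Prop-vertex}(2) for $\Gr_{s(e)}$), this forces $\EE_1(y^*x^*xy)=0$ for every $y$, whence $x=0$ because $\EE_1$ is GNS-faithful on $C$ (\cite{FG15}).

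The heart of the argument is the third hypothesis: $E_{A_p}(\Pi(a))=0$ for every reduced operator $a=a_0u_{e_1}a_1\cdots u_{e_n}a_n$ from $p$ to $p$, say with $p\in V(\Gr_{s(e)})$. If the path $(e_1,\dots,e_n)$ avoids $\{e,\rev{e}\}$ it lies entirely in $\Gr_{s(e)}$, so $\EE_1(\Pi(a))=\lambda_{\Gr_{s(e)}}(a)$ is a reduced operator from $p$ to $p$ in $P_{{\rm vert}}^{\Gr_{s(e)}}$ with $n\geqslant1$ edges, and $E_{A_p}(\Pi(a))=\EE_{A_p}^{\Gr_{s(e)}}(\EE_1(\Pi(a)))=0$. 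Otherwise, deleting the letters equal to $u_e=u_{\rev{e}}=1$ factors $a$ in $P$ as a product $a=B_0B_1\cdots B_{2m}$, where $B_0\in P_{\Gr_{s(e)}}$ is a reduced operator from $p$ to $s(e)$, $B_{2m}\in P_{\Gr_{s(e)}}$ one from $s(e)$ to $p$, and each inner block $B_j$ ($1\leqslant j\leqslant 2m-1$) lies in $P_{\Gr_{r(e)}}$ for $j$ odd and in $P_{\Gr_{s(e)}}$ for $j$ even, and is either a reduced operator from $r(e)$ (resp. $s(e)$) to itself with at least one edge, or, when two consecutive letters of the path both lie in $\{e,\rev{e}\}$, a single element $a_i\in A_{r(e)}$ (resp. $A_{s(e)}$). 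In the first case the subgraph vertex-expectation $\EE_{A_{r(e)}}^{\Gr_{r(e)}}$ (resp. $\EE_{A_{s(e)}}^{\Gr_{s(e)}}$) annihilates $\lambda_{\Gr_{r(e)}}(B_j)$ (resp. $\lambda_{\Gr_{s(e)}}(B_j)$), which therefore lies in $\ker E_{r(e)}$ (resp. $\ker E_{s(e)}$) since $E_{r(e)}=\lambda_{\Gr_{r(e)}}\circ E_e^r\circ\EE_{A_{r(e)}}^{\Gr_{r(e)}}$ and similarly for $E_{s(e)}$; in the second case one has $e_{i+1}=\rev{e}_i\in\{e,\rev{e}\}$, so the ``bounce'' condition in the definition of a reduced operator yields $E_e^r(a_i)=0$ (resp. $E_e^s(a_i)=0$), and again $B_j$ maps into $\ker E_{r(e)}$ (resp. $\ker E_{s(e)}$). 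Consequently $\Pi(a)=\Pi(B_0)\Pi(B_1)\cdots\Pi(B_{2m})$ is an alternating word in $C$ whose inner blocks lie alternately in $\ker E_{r(e)}$ and $\ker E_{s(e)}$; by the bimodule property of $\EE_1$ and the vanishing of $\EE_1$ on reduced words (\cite{FG15}) we get $\EE_1(\Pi(a))=0$, hence $E_{A_p}(\Pi(a))=0$. The case $p\in V(\Gr_{r(e)})$ is symmetric. I expect this combinatorial bookkeeping, in particular the verification that the bounce conditions of a reduced operator at the bridge $\{e,\rev{e}\}$ are exactly what forces its inner blocks into the correct kernels, to be the only delicate step.

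It then remains to read off the stated relations. With $\nu_e=\nu^{-1}$ one has $\nu_e\circ\Pi=\lambda$, whence $\nu_e\circ\pi\circ\lambda_{\Gr_{s(e)}}=\nu_e\circ\Pi\circ\pi_{\Gr_{s(e)}}=\lambda\circ\pi_{\Gr_{s(e)}}$, and likewise $\nu_e\circ\pi\circ\lambda_{\Gr_{r(e)}}=\lambda\circ\pi_{\Gr_{r(e)}}$; uniqueness of $\nu_e$ follows from that of $\nu$ in Proposition \ref{Prop-vertex}(3) (alternatively, these two relations already determine $\nu_e$ on a generating set of $C$). Finally, the identities $\EE_{\Gr_{s(e)}}\circ\nu_e=\EE_1$ and $\EE_{\Gr_{r(e)}}\circ\nu_e=\EE_2$ follow from the uniqueness of the canonical conditional expectations ($\EE_1,\EE_2$ in the vertex-reduced amalgamated free product by \cite{FG15}, and $\EE_{\Gr_{s(e)}},\EE_{\Gr_{r(e)}}$ by Proposition \ref{Cor-vertId}) once one checks that the composites share the corresponding defining properties: $\EE_{\Gr_{s(e)}}\circ\nu_e$ restricts to the identity on the copy of $P_{{\rm vert}}^{\Gr_{s(e)}}$ in $C$ (because $\nu_e$ carries this copy onto $\pi_{{\rm vert}}^{\Gr_{s(e)}}(P_{{\rm vert}}^{\Gr_{s(e)}})$ and $\EE_{\Gr_{s(e)}}\circ\pi_{{\rm vert}}^{\Gr_{s(e)}}=\id$ by Proposition \ref{Cor-vertId}) and annihilates reduced words of $C$ (by a decomposition along the bridge $\{e,\rev{e}\}$ parallel to the one above, now read inside $P_{{\rm vert}}$), and symmetrically for the second identity.
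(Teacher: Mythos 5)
Your overall route is sound, but it is the mirror image of the paper's argument rather than the same one. The paper equips $P_{{\rm vert}}$ with the two ucp maps $\EE_{\Gr_{s(e)}},\EE_{\Gr_{r(e)}}$ of Proposition \ref{Cor-vertId} and checks that $(P_{{\rm vert}},\EE_{\Gr_{s(e)}},\EE_{\Gr_{r(e)}})$ satisfies the universal property of the vertex-reduced amalgamated free product from \cite{FG15} (deferring the word combinatorics to \cite[Lemma 3.26]{FF13}); you instead equip $C=P_{{\rm vert}}^{\Gr_{s(e)}}\underset{B_e}{\overset{v}{*}}P_{{\rm vert}}^{\Gr_{r(e)}}$ with the surjection $\Pi$ and the vertex expectations $E_{A_p}=\EE^{\Gr_{s(e)}}_{A_p}\circ\EE_1$ (resp. $\EE^{\Gr_{r(e)}}_{A_p}\circ\EE_2$) and verify the universal property of $P_{{\rm vert}}$ (Proposition \ref{Prop-vertex}(3)), obtaining $\nu_e$ as the inverse of the resulting isomorphism. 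The combinatorial heart is the same bridge decomposition in both directions, and your verification of it is correct: since $u_e=u_{\rev{e}}=1$, a reduced operator crossing the bridge factors into blocks whose inner pieces land in $\ker E_{r(e)}$ and $\ker E_{s(e)}$, either because they are reduced operators of the subgraphs (killed by the subgraph vertex expectations, Proposition \ref{Prop-vertex}(2)) or because the bounce condition at $\{e,\rev{e}\}$ forces $E^r_e(a_i)=0$ or $E^s_e(a_i)=0$; then bimodularity of $\EE_1$ and its vanishing on free-product reduced words give $E_{A_p}(\Pi(a))=0$. Your route buys a self-contained verification inside the formalism of this paper, at the cost of redoing by hand the bookkeeping the paper imports from \cite{FF13}; the ``moreover'' identities you obtain by uniqueness of the canonical expectations, where the paper quotes \cite[Proposition 2.8(3)]{FG15}.

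One step is wrong as written: in the GNS-faithfulness check you conclude $x=0$ from $\EE_1(y^*x^*xy)=0$ for all $y$, ``because $\EE_1$ is GNS-faithful on $C$''. This is not available (and fails in general): the GNS representation of $\EE_1$ is the representation of $C$ on the Hilbert module of paths based at the $s(e)$-side, and its injectivity would mean that the vertex-reduced free product coincides with the one-vertex reduction, which is exactly what one cannot assume when the conditional expectations are not GNS-faithful; only the \emph{family} $\{\EE_1,\EE_2\}$ is GNS-faithful by the construction in \cite{FG15}. The repair is immediate with what you already have: running your argument also for $p\in V(\Gr_{r(e)})$ gives $\EE_2(y^*x^*xy)=0$ for all $y\in C$, and then joint GNS-faithfulness of $\{\EE_1,\EE_2\}$ yields $x=0$. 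With that one-line fix the proof is complete.
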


\begin{proof}
The proof is the same as the proof of \cite[Lemma 3.26]{FF13}, it suffices to prove that $P_{{\rm vert}}$ satisfies the universal property of $P_{{\rm vert}}^{\Gr_{s(e)}}\underset{B_e}{\overset{v}{*}} P_{{\rm vert}}^{\Gr_{r(e)}}$: the canonical ucp maps from $P_{{\rm vert}}$ to $P_{{\rm vert}}^{\Gr_{s(e)}}$ and $P_{{\rm vert}}^{\Gr_{r(e)}}$ are the ones constructed in Proposition \ref{Cor-vertId} i.e. $\EE_{\Gr_{s(e)}}$ and $\EE_{\Gr_{r(e)}}$. By \cite[Proposition 2.8, assertion 3]{FG15}, the resulting isomorphism $\nu_e$ intertwines the canonical ucp maps.
\end{proof}

\noindent\textit{Case 2: $\Gr_e$ is connected.}

\vspace{0.2cm}

\noindent Let $e\in E(\Gr)$ and suppose that $\Gr_e$ is connected. Up to a canonical isomorphism of $P$ we may and will assume that $\mathcal{T}\subset\Gr_e$. So that we have the canonical unital $*$-homomorphism $\pi_{\Gr_e}\,:\, P_{\Gr_e}\rightarrow P$. We consider the two unital faithful $*$-homomorphisms $s_e,r_e\,:\, B_e\rightarrow P_{\Gr_e}$. By definition, we have $u_er_e(b)u_e^*=s_e(b)$ for all $b\in B_e$ and $P$ is generated, as a C*-algebra, by $\pi_{\Gr_e}(P_{\Gr_e})$ and $u_e$. By the universal property of the maximal HNN-extension, there exists a unique unital (surjective) $*$-homomorphism $\nu\,:\,{\rm HNN}(P_{\Gr_e},B_e,s_e,r_e)\rightarrow P$ such that $\nu\vert_{P_{\Gr_e}}=\pi_{\Gr_e}$ and $\nu(u)=u_e$. Observe that, by the universal property of $P$, there exists a unital $*$-homomorphism $P\rightarrow {\rm HNN}(P_{\Gr_e},B_e,s_e,r_e)$ which is the inverse of $\nu$. Hence $\nu$ is a $*$-isomorphism. Actually this is also true at the vertex-reduced level.

\vspace{0.2cm}

\noindent Define the faithful unital $*$-homomorphism $\pi_1,\pi_{-1}\,:\,B_e\rightarrow P^{\Gr_e}_{{\rm vert}}$ by $\pi_{-1}=\lambda_{\Gr_e}\circ s_e$ and $\pi_1=\lambda_{\Gr_e}\circ r_e$. Note that the ucp maps $E_{\epsilon}\,:\, P_{{\rm vert}}^{\Gr_e}\rightarrow B_e$ defined by $E_1=s_e^{-1}\circ E_e^s\circ\EE_{s(e)}^{\Gr_e}$ and $E_{-1}=r_e^{-1}\circ E_e^r\circ\EE_{r(e)}^{\Gr_e}$ satisfy $E_\epsilon\circ\pi_{\epsilon}=\id_{B_e}$ for $\epsilon\in\{-1,1\}$. Hence we may consider the vertex-reduced HNN-extension and the canonical surjection $\lambda_e\,:\,{\rm HNN}(P^{\Gr_e}_{{\rm vert}},B_e,s_e,r_e)\rightarrow  {\rm HNN}_{{\rm vert}}(P_{{\rm vert}}^{\Gr_e},B_e,\pi_1,\pi_{-1})$. Write $v=\lambda_e(u)$, where $u\in {\rm HNN}(P^{\Gr_e}_{{\rm vert}},B_e,s_e,r_e)$ is the "stable letter". Recall that, by Proposition \ref{Cor-vertId}, we have the canonical faithful unital $*$-homomorphism $\pi_{{\rm vert}}^{\Gr_e}\,:\,P_{{\rm vert}}^{\Gr_e}\rightarrow P_{{\rm vert}}$. Let $\EE\,:\,{\rm HNN}_{{\rm vert}}(P_{{\rm vert}}^{\Gr_e},B_e,\pi_1,\pi_{-1})\rightarrow P_{{\rm vert}}^{\Gr_e}$ the canonical GNS-faithful ucp map.

\begin{lemma}\label{DevissageHNN}
There is a unique $*$-isomorphism $\nu_e\,:\, {\rm HNN}_{{\rm vert}}(P_{{\rm vert}}^{\Gr_e},B_e,\pi_1,\pi_{-1})\rightarrow P_{{\rm vert}}$ such that $\nu_e\circ\lambda_e\vert_{P_{{\rm vert}}^{\Gr_e}}=\pi_{{\rm vert}}^{\Gr_e}$ and $\nu_e(u)=u_e$. Moreover $\EE_{\Gr_e}\circ\nu_e=\EE$.
\end{lemma}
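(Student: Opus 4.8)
The plan is to follow the scheme of Lemma~\ref{DevissageFree} and of the d\'evissage in \cite{FF13}: I will verify that $P_{{\rm vert}}$, equipped with the faithful $*$-homomorphism $\pi_{{\rm vert}}^{\Gr_e}\colon P_{{\rm vert}}^{\Gr_e}\to P_{{\rm vert}}$, the unitary $u_e\in P_{{\rm vert}}$, and the ucp map $\EE_{\Gr_e}\colon P_{{\rm vert}}\to P_{{\rm vert}}^{\Gr_e}$ of Proposition~\ref{Cor-vertId}, satisfies the three characterizing properties of $\HNN_{{\rm vert}}(P_{{\rm vert}}^{\Gr_e},B_e,\pi_1,\pi_{-1})$ listed in Example~\ref{ExHNN}. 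Granting this, property~(3) applied with the test algebra $D=P_{{\rm vert}}$ produces the map $\nu_e$, with $\nu_e(u)=u_e$ and $\nu_e\circ\lambda_e|_{P_{{\rm vert}}^{\Gr_e}}=\pi_{{\rm vert}}^{\Gr_e}$, and with $\EE_{\Gr_e}\circ\nu_e=\EE$ coming for free from the ``Moreover'' of~(3); applying property~(3) of $P_{{\rm vert}}$ itself with $D=\HNN_{{\rm vert}}(P_{{\rm vert}}^{\Gr_e},B_e,\pi_1,\pi_{-1})$ produces a two-sided inverse, the uniqueness clauses forcing the two composites to be the identity and yielding at the same time the uniqueness of $\nu_e$.

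Most of the verifications will be routine. The HNN relation will follow by applying $\lambda$ to the defining relation $u_{\rev{e}}s_e(b)u_e=r_e(b)$ of $P$ and using the intertwining $\pi_{{\rm vert}}^{\Gr_e}\circ\lambda_{\Gr_e}=\lambda\circ\pi_{\Gr_e}$ of Proposition~\ref{Cor-vertId}; generation of $P_{{\rm vert}}$ by $\pi_{{\rm vert}}^{\Gr_e}(P_{{\rm vert}}^{\Gr_e})$ and $u_e$ will follow from the surjectivity of $\lambda$ and $\lambda_{\Gr_e}$ together with the fact that $\pi_{\Gr_e}(P_{\Gr_e})$ and $u_e$ generate $P$; the identity $\EE_{\Gr_e}\circ\pi_{{\rm vert}}^{\Gr_e}=\id$ is already contained in Proposition~\ref{Cor-vertId} (the compressing projection $Q$ commutes with $P_{{\rm vert}}^{\Gr_e}$, exactly as in Proposition~\ref{Cor-Univ-pred}); and the GNS-faithfulness of $\EE_{\Gr_e}$ will follow from the usual cyclicity argument, as for assertion~(2) of \cite[Proposition~2.8]{FG15}: $\EE_{\Gr_e}$ is implemented by compression onto a Hilbert submodule containing every cyclic vector $\xi_p$, $p\in V(\Gr)$, and the vectors $\lambda_{v,p}(P_{{\rm vert}})\xi_p\cdot A_p$ are dense in $H_{p,p}$ by Proposition~\ref{Prop-pvertex}, so that $\EE_{\Gr_e}(y^{*}x^{*}xy)=0$ for all $y\in P_{{\rm vert}}$ forces $x$ to be annihilated by the faithful representation $\bigoplus_{p}\lambda_{v,p}$, hence $x=0$.

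The only genuinely substantial point --- and where I expect the main difficulty to lie --- is the vanishing $\EE_{\Gr_e}(x)=0$ for a reduced HNN word $x=\pi_{{\rm vert}}^{\Gr_e}(a_0)u_e^{\epsilon_1}\cdots u_e^{\epsilon_n}\pi_{{\rm vert}}^{\Gr_e}(a_n)$ with $n\geq 1$ and $\epsilon_{k+1}=-\epsilon_k\Rightarrow E_{-\epsilon_k}(a_k)=0$ for all $k$. By continuity of $\EE_{\Gr_e}$ I would first reduce to the case where each $a_k$ lies in the dense $*$-subalgebra of $P_{{\rm vert}}^{\Gr_e}$ spanned by the $A_q$'s and the $\lambda_{\Gr_e}$-images of the reduced operators of $P_{\Gr_e}$, so that $x=\lambda(\widehat x)$ for an explicit product $\widehat x\in P$ of reduced operators having paths in $\Gr_e$, interleaved with powers of $u_e$. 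Then, applying Lemma~\ref{LemProduct} repeatedly --- exactly as in the HNN case of the d\'evissage of \cite{FF13} --- I would rewrite $\widehat x$ as a finite sum of reduced operators of $P$ from a fixed basepoint to itself, each of whose associated path contains $e$ or $\rev{e}$, with no term landing in some $A_q$: the hypothesis $E_{-\epsilon_k}(a_k)=0$ is exactly what rules out the cancellations $u_eu_{\rev{e}}$ that could erase $e$, while the fact that a path lying in $\Gr_e$ can never start with $\rev{e}$ prevents any cancellation at the junctions between consecutive $a_k$'s and $u_e$'s. It then remains to observe that $\EE_{\Gr_e}$ annihilates $\lambda$ of any reduced operator of $P$ whose path uses $e$ or $\rev{e}$; this is the edge-analogue of the vanishing established in Proposition~\ref{Cor-Univ-pred}, proved the same way: for such an operator $a$ from $p$ to $p$ and any reduced operator (or element of some $A_q$) $b$ with path in $\Gr_e$, Lemma~\ref{LemProduct} shows $ab$ is again a sum of reduced operators whose paths use $e$, so $\lambda_q(ab)\xi_q$ is orthogonal to the submodule of $\Gr_e$-paths for every $q$, and since the vectors $\lambda(b)\xi_q$ span a dense subspace of that submodule we obtain $\EE_{\Gr_e}(\lambda(a))=0$. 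As an alternative to this combinatorial step one could instead try to deduce the lemma from Lemma~\ref{DevissageFree} via Ueda's realization of a reduced HNN-extension inside a reduced amalgamated free product \cite{Ue08}.
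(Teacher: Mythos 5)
Your proposal takes essentially the same route as the paper: the published proof likewise verifies that $P_{{\rm vert}}$, equipped with $\pi_{{\rm vert}}^{\Gr_e}$, the unitary $u_e$ and the ucp map $\EE_{\Gr_e}$ of Proposition~\ref{Cor-vertId}, satisfies the characterizing (universal) property of the vertex-reduced HNN-extension from Example~\ref{ExHNN}, and it defers the GNS-faithfulness and the vanishing on reduced HNN words to the argument of \cite[Lemma 3.27]{FF13}, which is exactly the Lemma~\ref{LemProduct}-based combinatorial check you sketch. So the proposal is correct and matches the paper's proof, merely spelling out details the paper cites away.
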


\begin{proof}
Since we have $u_e\pi_{{\rm vert}}^{\Gr_e}(\pi_{-1}(b)) u_e^*=\pi_{{\rm vert}}^{\Gr_e}(\pi_1(b))$ for all $b\in B_e$, it suffices, by the universal property of the vertex-reduced HNN-extension explained in Example \ref{ExHNN}, to check that we have a GNS-faithful ucp map $P_{{\rm vert}}\rightarrow P_{{\rm vert}}^{\Gr_e}$ satisfying the conditions described in Example \ref{ExHNN}. This ucp map is the one constructed in Proposition \ref{Cor-vertId}: it is the map $\EE_{\Gr_e}$ and the conditions can be checked as in the proof of \cite[Lemma 3.27]{FF13}. The fact that the resulting isomorphism $\nu_e$ intertwines the ucp maps follows from the universal property.
\end{proof}
\noindent We end this preliminary section with an easy Lemma.

\begin{lemma}\label{LemNorm}
If $x=a_0u_{e_1}\dots u_{e_n}a_n\in P$ is a reduced operator from $p$ to $p$ and $a_n\in B_{e_n}^r$ then
$$\EE_{p}(\lambda_p(x^*x))=\EE_{e_n}^r\circ\EE_{p}(\lambda_p(x^*x)).$$
\end{lemma}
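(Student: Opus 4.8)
The plan is to compute $\EE_p(\lambda_p(x^*x))$ explicitly and show it lies in $B_{e_n}^r$; since $B_{e_n}^r$ is exactly the range of the conditional expectation $\EE_{e_n}^r\,:\,A_{r(e_n)}=A_p\rightarrow B_{e_n}^r$ and a conditional expectation restricts to the identity on its range, the stated identity $\EE_p(\lambda_p(x^*x))=\EE_{e_n}^r(\EE_p(\lambda_p(x^*x)))$ then follows at once.

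First I would apply Lemma \ref{LemProduct} to the product $x^*x$. The element $x^*=a_n^*u_{\rev e_n}\dots u_{\rev e_1}a_0^*$ is again a reduced operator from $p$ to $p$, whose associated path is the reversed inverse path of $x$; hence in the notation of Lemma \ref{LemProduct} the cancellation index is maximal, $n_0=n=m$, and case $(2)$ applies: $x^*x=\sum_{k=1}^n r_k+x_n$, where each $r_k$ is a reduced operator from $p$ to $p$ and $x_n\in A_{r(e_n)}=A_p$ is obtained from the recursion $x_0=a_0^*a_0$, $x_k=a_k^*\bigl(r_{e_k}\circ s_{e_k}^{-1}\circ E_{e_k}^s(x_{k-1})\bigr)a_k$ for $1\le k\le n$. (That each $r_k$ is reduced uses that its only new junction carries the coefficient $y_k=\PP_{\rev e_k}^r(x_{k-1})$, which satisfies $\EE_{e_k}^s(y_k)=0$ by construction, while every other junction is inherited from the reducedness of $x^*$ and of $x$.) Applying $\EE_p\circ\lambda_p$ and invoking Proposition \ref{Prop-pvertex}$(2)$ — which kills reduced operators from $p$ to $p$ and is the identity on $A_p$ — we obtain $\EE_p(\lambda_p(x^*x))=x_n$.

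Next I would observe that, by the last step of the recursion, $x_n=a_n^*\bigl(r_{e_n}\circ s_{e_n}^{-1}\circ E_{e_n}^s(x_{n-1})\bigr)a_n$, whose middle factor lies in $r_{e_n}(B_{e_n})=B_{e_n}^r$. Since by hypothesis $a_n\in B_{e_n}^r$ and $B_{e_n}^r$ is a $*$-subalgebra of $A_p$, it follows that $x_n\in B_{e_n}^r$, and therefore $\EE_{e_n}^r\bigl(\EE_p(\lambda_p(x^*x))\bigr)=\EE_{e_n}^r(x_n)=x_n=\EE_p(\lambda_p(x^*x))$, as claimed.

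The only mildly delicate point is the index bookkeeping in the first step — matching the (reversed, inverted) path of $x^*$ to the hypotheses of Lemma \ref{LemProduct}, checking that $n_0=n$, and confirming that each $r_k$ is genuinely a reduced operator — but this is entirely routine. If one prefers to avoid Lemma \ref{LemProduct}, one may instead compute directly in the Hilbert module $H_{p,p}$: writing $\EE_p(\lambda_p(x^*x))=\langle\lambda_p(x)\xi_p,\lambda_p(x)\xi_p\rangle$ with $\xi_p=1_{A_p}$, the reducedness of $x$ forces $\lambda_p(x)\xi_p=\rho_{e_1}^s(a_0)\eta_{e_1}^s\ot\dots\ot\rho_{e_n}^s(a_{n-1})\eta_{e_n}^s\ot a_n\in H_{(e_1,\dots,e_n)}$, and evaluating the inner product by peeling off the leftmost legs one after another yields $\EE_p(\lambda_p(x^*x))=a_n^*r_{e_n}(c_n)a_n$ for a suitable $c_n\in B_{e_n}$ — again manifestly an element of $B_{e_n}^r$.
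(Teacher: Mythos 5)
Your proposal is correct and follows essentially the same route as the paper: apply Lemma \ref{LemProduct} in case $(2)$ to the pair $(x^*,x)$, note that $\EE_p\circ\lambda_p$ kills the reduced terms so that $\EE_p(\lambda_p(x^*x))=x_n=a_n^*\bigl(r_{e_n}\circ s_{e_n}^{-1}\circ E_{e_n}^s(x_{n-1})\bigr)a_n\in B_{e_n}^r$, and conclude because $\EE_{e_n}^r$ is the identity on $B_{e_n}^r$. The extra remarks (the bookkeeping for $n_0=n=m$ and the alternative direct computation in $H_{p,p}$) are fine but not needed beyond what the paper does.
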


\begin{proof}
Define $x_0=a_0^*a_0$ and for $1\leq k\leq n$, $x_k=a_k^*(r_{e_k}\circ s_{e_k}^{-1}\circ\EE_{e_k}^s(x_{k-1}))a_k$. We apply Lemma \ref{LemProduct} to the pair $a=x^*$ and $b=x$ in case $(2)$. It follows that $x^*x=y+x_n$, where $y$ is a sum of reduced operators from $p$ to $p$. Hence $\EE_{p}(\lambda_p(y))=0$ and, since $a_n\in B_{e_n}^r$, we have $x_n=a_n^*(r_{e_n}\circ s_{e_n}^{-1}\circ\EE_{e_n}^s(x_{n-1}))a_n\in B_{e_n}^r$.
\end{proof}

\section{Boundary maps}

\noindent Define the ucp map $\EE_e=E_e^r\circ\EE_{A_{r(e)}}\,:\, P_{{\rm vert}}\rightarrow B_e^r$. Note that the GNS construction of $\EE_e$ is given by $(H_{r(e),r(e)}\underset{E_e^r}{\ot}B_e^r,\lambda_{v,r(e)}\ot 1,\xi_{r(e)}\ot 1)$. To simplify the notations, we will denote by $(K_e,\rho_e,\eta_e)$ the GNS construction of $\EE_e$. We define $\mathcal{R}_{e}\subset K_e$ as the Hilbert $B_e^r$-submodule of $K_e$ of the ``words ending with $e$". More precisely,
$$\mathcal{R}_{e}:=\overline{\text{Span}}\{\rho_e(\lambda(x))\eta_e\,\vert\,x=a_0u_{e_1}\dots u_{e_n}a_n\in P\text{ reduced from $r(e)$ to $r(e)$}$$
$$\text{ with }e_n=e\text{ and }a_n\in B_e^r\}\subset K_e.$$
It is easy to see from the definition that $\mathcal{R}_{e}$ is a Hilbert $B_e^r$-submodule of $K_e$. Moreover, it is complemented in $K_e$ with orthogonal complement given by:
$$\mathcal{L}_{e}:=\overline{\text{Span}}\{\rho_e(\lambda(x))\eta_e\,\vert\,x\in A_{r(e)}\text{ or }x=a_0u_{e_1}\dots u_{e_n}a_n\in P\text{ reduced from $r(e)$ to $r(e)$ with }$$
$$e_n\neq e\text{ or }e_n=e\text{ and }a_n\in A_{r(e)}\ominus B_e^r\}.$$
Let $Q_e\in\mathcal{L}_{B_e^r}(K_e)$ be the orthogonal projection onto $\mathcal{R}_{e}$ and define 
$$X_e=\{x=a_0u_{e_1}\dots u_{e_n}a_n\in P\text{ reduced from $r(e)$ to $r(e)$ with }e_k\notin\{\overline{e},e\}\text{ for all }1\leq k\leq n\},$$

\begin{lemma}\label{LemKK1}
The following holds.
\begin{enumerate}
\item For all reduced operator $a=a_nu_{e_n}\dots u_{e_1}a_0\in P$ from $r(e)$ to $r(e)$ we have
$${\rm Im}(Q_e\rho_e(\lambda(a))-\rho_e(\lambda(a))Q_e)\subset \overline{X_a},\text{ where:}$$
$$X_a=\left\{\begin{array}{ll}Y_a:=\left(\underset{k\in\{1,\dots,n\}, e_k=e}{\bigoplus}\rho_e(\lambda(a_nu_{e_n}\dots u_{e_{k}}))\eta_e\cdot B_e^r\right)&\text{if }e\text{ is not a loop,}\\
Y_a\oplus\left(\underset{k\in\{1,\dots,n\}, e_{k}=\overline{e}}{\bigoplus}\rho_e(\lambda(a_nu_{e_n}\dots u_{e_{k+1}}a_{k}))\eta_e\cdot B_e^r\right)&\text{if }e\text{ is a loop.}\end{array}\right.$$
(by convention, the term in the last direct sum is $\rho_e(\lambda(a_n))\eta_e\cdot B_e^r$, when $e_n=\overline{e}$ is a loop)
\item $Q_e$ commutes with $\rho_e(\lambda(a))$ for all $a\in \overline{{\rm Span}}\left(A_{r(e)}\cup X_e\right)$.
\item $Q_e\rho_e(\lambda(a))-\rho_e(\lambda(a))Q_e\in\mathcal{K}_{B_e^r}(K_e)$ for all $a\in P$.
\end{enumerate}
\end{lemma}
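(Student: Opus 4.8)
The plan is to prove the three assertions in order, since each one builds on the previous. For (1), I would compute directly the effect of the commutator $Q_e\rho_e(\lambda(a)) - \rho_e(\lambda(a))Q_e$ on a generating vector $\rho_e(\lambda(x))\eta_e$ of $K_e$, where $x \in A_{r(e)}$ or $x$ is a reduced operator from $r(e)$ to $r(e)$. The key tool is Lemma \ref{LemProduct}: multiplying the reduced operator $a = a_nu_{e_n}\dots u_{e_1}a_0$ by $x$ (or rather forming $ax$, reading the product of reduced operators) decomposes $ax$ as a sum of reduced operators plus, possibly, a term landing in $A_{r(e)}$. Applying $\rho_e\circ\lambda$ and then $\eta_e$, and sorting the resulting reduced words according to whether their last edge is $e$ with last coefficient in $B_e^r$ (which is exactly the defining condition for $\mathcal{R}_e$), one sees that $Q_e$ kills or preserves each summand according to that criterion. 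The commutator $[Q_e, \rho_e(\lambda(a))]$ applied to $\rho_e(\lambda(x))\eta_e$ then only picks up the ``boundary'' words: those obtained by truncating $a$ at a position $k$ where $e_k = e$ (giving the terms in $Y_a$), and, when $e$ is a loop, additionally those where $e_k = \overline{e}$ and the cancellation in Lemma \ref{LemProduct} produces a word whose last remaining edge/coefficient structure matches $\mathcal{R}_e$ after passing through the partial cancellation $x_k$, $y_k$. This is a careful but routine bookkeeping argument; the loop case is the fiddly part because the cancellation in the middle (Lemma \ref{LemProduct}, cases (2)--(5)) can both create and destroy words in $\mathcal{R}_e$, which is why the extra direct summand appears.

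For (2), observe that if $a \in A_{r(e)}$ then $\rho_e(\lambda(a))$ maps $\mathcal{R}_e$ into $\mathcal{R}_e$ (left multiplication by $a \in A_{r(e)}$ on a reduced word ending in $e$ with coefficient in $B_e^r$ either stays reduced with the same tail, or, if $a$ gets absorbed, produces a shorter word still ending appropriately — and it also preserves $\mathcal{L}_e$ by the analogous reasoning), hence it commutes with $Q_e$. Similarly, if $a \in X_e$, i.e. $a$ is a reduced word none of whose edges is $e$ or $\overline{e}$, then forming $ax$ never cancels against the final $e$ of a word in $\mathcal{R}_e$ (the first edge of $a$ is neither $e$ nor $\overline e$, so no new $e$ appears at the end and none is destroyed), so again both $\mathcal{R}_e$ and $\mathcal{L}_e$ are invariant under $\rho_e(\lambda(a))$. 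By linearity and continuity this extends to $\overline{\mathrm{Span}}(A_{r(e)}\cup X_e)$, giving (2). Alternatively, (2) is a formal consequence of (1): for such $a$ the set $X_a$ in part (1) is empty (no index $k$ has $e_k = e$, and for loops no $e_k = \overline e$ either), so the commutator has zero image.

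For (3), the point is that each ``boundary'' summand appearing in $X_a$ from part (1) is a \emph{finitely generated} Hilbert $B_e^r$-submodule of $K_e$ — it is a finite direct sum of modules of the form $\zeta\cdot B_e^r$ — hence the orthogonal projection onto it lies in $\mathcal{K}_{B_e^r}(K_e)$, and therefore $[Q_e,\rho_e(\lambda(a))] \in \mathcal{K}_{B_e^r}(K_e)$ for every $a$ of the form covered by part (1). Since, by Remark \ref{rem:redmax} (applied in $P$) and Proposition \ref{Prop-vertex}, the linear span of $A_{r(e)}$ and the reduced operators from $r(e)$ to $r(e)$ is dense in $P$, and since the map $a \mapsto [Q_e,\rho_e(\lambda(a))]$ is linear and norm-continuous with values in $\mathcal{L}_{B_e^r}(K_e)$, the relation $[Q_e,\rho_e(\lambda(a))] \in \mathcal{K}_{B_e^r}(K_e)$ passes to all $a \in P$ because $\mathcal{K}_{B_e^r}(K_e)$ is closed. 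The main obstacle, as indicated, is the precise identification of $X_a$ in part (1) in the loop case — getting exactly the right truncation positions and coefficients to land in $\mathcal{R}_e$ after the middle cancellations of Lemma \ref{LemProduct} — but once (1) is established with finitely generated $X_a$, parts (2) and (3) are short.
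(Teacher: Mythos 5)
Your parts (1) and (2) follow the paper's own route (a direct computation with Lemma \ref{LemProduct}, plus the observation that $X_a$ is trivial when $a\in A_{r(e)}\cup X_e$), so the only point of substance is part (3), and there your key step has a genuine gap. You argue: the commutator has image in $\overline{X_a}$, which is a finitely generated submodule, hence the orthogonal projection onto it is compact, hence the commutator is compact. But $\overline{X_a}$ is only \emph{topologically} finitely generated: it is the closed span of finitely many submodules $\zeta\cdot B_e^r$ with $\zeta=\rho_e(\lambda(\cdots))\eta_e$, and $\langle\zeta,\zeta\rangle=\EE_e(\lambda(\cdots))$ has no reason to be invertible (recall the conditional expectations are not even assumed GNS-faithful). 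Such a submodule need not be orthogonally complemented, and, more to the point, an adjointable operator whose range lies in $\overline{\zeta\cdot B}$ need not be compact. For a concrete counterexample, let $B$ be the C*-algebra of convergent sequences, $K=H_B$ the standard module, $e_n\in B$ the indicator of $\{n\}$, $\zeta=(2^{-n}e_n)_n\in H_B$, and $T$ the diagonal operator $T(\xi_n)_n=(e_n\xi_n)_n$. Then $T=T^*\in\mathcal{L}_B(H_B)$ and every $T\xi$ lies in $\overline{\zeta\cdot B}$ (approximate by $\zeta b^{(k)}$ with $b^{(k)}=\sum_{n\leq k}2^n\xi_n(n)e_n$), yet $T$ is not compact: it sends the unit vectors $\delta_m\otimes 1_B$ to $\delta_m\otimes e_m$ of norm one, while any finite-rank $\sum_i\theta_{\alpha_i,\beta_i}$ sends them to vectors of norm tending to $0$. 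So range containment in $\overline{X_a}$ plus adjointability simply does not yield compactness.

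What is actually needed, and what the computations behind (1) do give, is the finer fact that the commutator \emph{is} a finite-rank operator $\sum_k\theta_{\xi_k,\mu_k}$, because its value at a generating vector depends on that vector only through inner products such as $\langle\eta_e,\cdot\rangle$. The paper makes this painless by factorizing any reduced operator from $r(e)$ to $r(e)$ into products of three types (all edges avoiding $e,\overline e$; $u_{\overline e}x$; $xu_e$), so that by part (2) and the Leibniz rule only the type $a=xu_e$ (or $u_e$ itself when $e$ is a loop) matters, and there the computation gives literally $(Q_e\rho_e(\lambda(a))-\rho_e(\lambda(a))Q_e)\xi=\rho_e(\lambda(a))\eta_e\cdot\langle\eta_e,\xi\rangle$, a rank-one operator; the density-and-continuity conclusion then works exactly as you state it. Your argument can be repaired along the same lines by recording in part (1) not just the range of the commutator but its explicit $\theta$-form on the generating vectors; as written, however, the compactness step in (3) does not follow from the range statement alone.
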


\begin{proof}
During the proof we will use the notation $\mathcal{P}_e^r(x)=x-E_e^r(x)$ for $x\in A_{r(e)}$.

\vspace{0.2cm}

\noindent$(1)$. Let $n\geq 1$ and $a=a_nu_{e_n}\dots u_{e_1}a_0\in P$ a reduced operator from $r(e)$ to $r(e)$.

\noindent Suppose that $b\in A_{r(e)}$. We have $Q_e\rho_e(\lambda(b))\eta_e=0$ and $ab=a_nu_{e_n}\dots u_{e_1}a_0b\in P$ is reduced. Hence, if $e_1\neq e$, we have $Q_e\rho_e(\lambda(ab))\eta_e=0$ and, if $e_1= e$, we have
$$ab=a_nu_{e_n}\dots u_{e}E_e^r(x_0)+a_n\dots u_{e}\mathcal{P}_e^r(x_0)\quad\text{where}\quad x_0=a_0b.$$
It follows that $Q_e\rho_e(\lambda(ab))\eta_e=\rho_e(\lambda(a_nu_{e_n}\dots u_{e}E_e^r(x_0)))\eta_e$. To conclude we have, $\forall b\in A_{r(e)}$,
$$(Q_e\rho_e(\lambda(a))-\rho_e(\lambda(a))Q_e)\rho_e(\lambda(b))\eta_e=\left\{\begin{array}{lcl}0\in X_a&\text{if}&e_1\neq e,\\\rho_e(\lambda(a_nu_{e_n}\dots u_{e_1}))\eta_e\cdot E_e^r(a_0b)\in X_a&\text{if}&e_1=e.\end{array}\right.$$

\noindent Suppose that $b=b_0u_{f_1}\dots u_{f_m}b_m\in P$ is a reduced operator from $r(e)$ to $r(e)$. Let $0\leq n_0\leq{\rm min}\{n,m\}$ be the integer associated to the couple $(a,b)$ in Lemma \ref{LemProduct}. This Lemma implies that, when $n_0=0$ or $n_0=n<m$ or $1\leq n_0<{\rm min}\{n,m\}$, $ab$ is a reduced word or a sum of reduced words that end by $u_{f_m}b_m$. Hence, in this cases, we have $\rho_e(\lambda(b))\eta_e\in\mathcal{R}_{e}\implies\rho_e(\lambda(ab))\eta_e\in\mathcal{R}_{e}$ and  $\rho_e(\lambda(b))\eta_e\in\mathcal{L}_{e}\ominus A_{r(e)}\implies\rho_e(\lambda(ab))\eta_e\in\mathcal{L}_{e}\ominus A_{r(e)}$. It follows that $(Q_e\rho_e(\lambda(a))-\rho_e(\lambda(a))Q_e)\rho_e(\lambda(b))\eta_e=0\in X_a$.

\vspace{0.2cm}

\noindent Suppose now that $n_0=m<n$. Lemma \ref{LemProduct} implies that $ab=y+z$ where $y$ is a sum of reduced words that end by $u_{f_m}b_m$ and $z=a_nu_{e_n}\dots u_{e_{m+1}}x_m$. Hence we have $\rho_e(\lambda(b))\eta_e\in\mathcal{R}_{e}\implies\rho_e(\lambda(y))\eta_e\in\mathcal{R}_{e}$ and  $\rho_e(\lambda(b))\eta_e\in\mathcal{L}_{e}\ominus A_{r(e)}\implies\rho_e(\lambda(y))\eta_e\in\mathcal{L}_{e}\ominus A_{r(e)}$. It follows that
$$(Q_e\rho_e(\lambda(a))-\rho_e(\lambda(a))Q_e)\rho_e(\lambda(b))\eta_e=\left\{\begin{array}{lcl}
Q_e\rho_e(\lambda(z))\eta_e&\text{if}&\rho_e(\lambda(b))\eta_e\in\mathcal{L}_{e},\\
Q_e\rho_e(\lambda(z))\eta_e-\rho_e(\lambda(z))\eta_e&\text{if}&\rho_e(\lambda(b))\eta_e\in\mathcal{R}_{e}.\end{array}\right.$$
We have $\rho_e(\lambda(z))\eta_e=\rho_e(\lambda(a_nu_{e_n}\dots u_{e_{m+1}}x_m))\eta_e$ hence,
$$Q_e\rho_e(\lambda(z))\eta_e=\left\{\begin{array}{lcl}
0\in X_a&\text{if}&e_{m+1}\neq e\text{ or }e_{m+1}=e\text{ and }x_m\in A_{r(e)}\ominus B_e^r,\\
\rho_e(\lambda(a_nu_{e_n}\dots u_{e_{m+1}}))\eta_e. x_m\in X_a&\text{if} &e_{m+1}=e\text{ and }x_m\in  B_e^r.\end{array}\right.$$
Hence $(Q_e\rho_e(\lambda(a))-\rho_e(\lambda(a))Q_e)\rho_e(\lambda(b))\eta_e\in X_a$ if $\rho_e(\lambda(b))\eta_e\in\mathcal{L}_{e}$ and, if $\rho_e(\lambda(b))\eta_e\in\mathcal{R}_{e}$, we have $f_m=e$ and $b_m\in B_e^r$. Since $n_0=m$ we conclude that $e_m=\overline{f}_m=\overline{e}$ and $x_m$ is equal to $a_m(r_e\circ s_e^{-1}\circ E_e^s(x_{m-1}))b_m$. Note that, since $r(f_m)=r(e)$ and $f_m=\overline{e}$ we find that $s(e)=r(f_m)=r(e)$. Hence $e$ must be a loop. Moreover, $\rho_e(\lambda(z))\eta_e=\rho_e(\lambda(a_nu_{e_n}\dots u_{e_{m+1}}a_m))\eta_e\cdot x_m'\in X_a$, where $x_m'=(r_e\circ s_e^{-1}\circ E_e^s(x_{m-1}))b_m\in B_e^r$. It follows that $(Q_e\rho_e(\lambda(a))-\rho_e(\lambda(a))Q_e)\rho_e(\lambda(b))\eta_e\in X_a$ also when $\rho_e(\lambda(b))\eta_e\in\mathcal{R}_{e}$.

\vspace{0.2cm}

\noindent Suppose that $n_0=n=m$. Lemma \ref{LemProduct} implies that $ab=y+x_m$ where $y$ is a sum of reduced words that end by $u_{f_m}b_m$. As before, we deduce that:
$$(Q_e\rho_e(\lambda(a))-\rho_e(\lambda(a))Q_e)\rho_e(\lambda(b))\eta_e=\left\{\begin{array}{lcl}
Q_e\rho_e(\lambda(x_m))\eta_e=0&\text{if}&\rho_e(\lambda(b))\eta_e\in\mathcal{L}_{e},\\
Q_e\rho_e(\lambda(x_m))\eta_e-\rho_e(\lambda(x_m))\eta_e&\text{if}&\rho_e(\lambda(b))\eta_e\in\mathcal{R}_{e}.\end{array}\right.$$
And, if $\rho_e(\lambda(b))\eta_e\in\mathcal{R}_{e}$ then $f_m=e$ and $b_m\in B_e^r$. Since $n_0=m=n$, we deduce that $e_n=\overline{f_m}=\overline{e}$ (hence $e$ is a loop) and $x_m=a_n(r_e\circ s_e^{-1}\circ E_e^s(x_{n-1}))b_n\in a_n B_e^r$. Hence,
$$Q_e\rho_e(\lambda(x_m))\eta_e-\rho_e(\lambda(x_m))\eta_e=-\rho_e(\lambda(x_m))\eta_e=-\rho_e(\lambda(a_n))\eta_e\cdot x_n'\in X_a,$$
where $x_n'=(r_e\circ s_e^{-1}\circ E_e^s(x_{n-1}))b_n\in B_e^r$. This concludes the proof of the Lemma.

\vspace{0.2cm}

\noindent$(2)$. It is obvious that $\rho_e(\lambda(a))$ commutes with $Q_e$ for all $a\in A_{r(e)}$. Hence, $(2)$ follows from $(1)$.

\vspace{0.2cm}

\noindent$(3)$. Again, it directly follows from the computations made in $(1)$ but we write the details for the convenience of the reader. Since any reduced operator in $P$ from $r(e)$ to $r(e)$ may be written as a product of reduced operators $a\in P$ from $r(e)$ to $r(e)$ of the form $(I)$: the edges in $a$ are all different from $e$ or $\overline{e}$; $(II)$: $a=u_{\overline{e}}x$, where $x$ is a reduced operator from $s(e)$ to $r(e)$ whose edges are all different from $e$ or $\overline{e}$; $(III)$: $a=xu_{e}$, where $x$ is a reduced operator from $r(e)$ to $s(e)$ whose edges are all different from $e$ or $\overline{e}$. By $(2)$ $\rho_e(\lambda(a))$ commutes with $Q_e$ for $a$ of type $(I)$ and, since any element of type $(II)$ is the adjoint of an element of type$(III)$, it suffices to show that the commutator of $Q_e$ and $\rho_e(\lambda(a))$ is compact for all $a$ of type $(III)$. First assume that $e$ is a loop. In that case, it suffices to show that $Q_e\rho_e(\lambda(u_e))-\rho_e(\lambda(u_e))Q_e$ is compact. Let $b\in P$. From the computations made in $(1)$, we see that $(Q_e\rho_e(\lambda(u_e))-\rho_e(\lambda(u_e))Q_e)\rho_e(\lambda(b))\eta_e=0$ for any $b\in P$ reduced operator from $r(e)$ to $r(e)$ and, for $b\in A_{r(e)}$ one has
$$(Q_e\rho_e(\lambda(u_e))-\rho_e(\lambda(u_e))Q_e)\rho_e(\lambda(b))\eta_e=\rho_e(\lambda(u_e)\eta_e\cdot E_e^r(b)=\rho_e(\lambda(u_e))\eta_e\cdot\langle\eta_e,\rho_e(\lambda(b))\eta_e\rangle.$$
Hence, the equality $(Q_e\rho_e(\lambda(u_e))-\rho_e(\lambda(u_e))Q_e)\xi=\rho_e(\lambda(u_e))\eta_e\cdot\langle\eta_e,\xi\rangle$ holds for any $\xi=\rho_e(\lambda(b))\eta_e$ with $b$ in the span of $A_{r(e)}$ and the reduced operator in $P$ from $r(e)$ to $r(e)$. Hence, it holds for any $\xi\in K_e$. It follows that the commutator of $Q_e$ and $\rho_e(\lambda(u_e))$ is a rank one operator, hence compact. Let us now assume that $e$ is not a loop. Write $a=a_nu_{e_n}\dots u_{e_1}a_0u_{e}$, where $n\geq 1$, $e_k\notin\{e,\overline{e}\}$ for all $k$. For $b\in P$ we write $X(b)=(Q_e\rho_e(\lambda(a))-\rho_e(\lambda(a))Q_e)\rho_e(\lambda(b))\eta_e$. As before, following the computations made in $(1)$ we see that, since $e_k\notin\{e,\overline{e}\}$, we have $X(b)=0$ whenever $b$ is a reduced operator from $r(e)$ to $r(e)$. Moreover, when $b\in A_{r(e)}$ we have $X(b)=\rho_e(\lambda(a))\eta_e\cdot\langle\eta_e,\rho_e(\lambda(b))\eta_e\rangle$. As before, it follows that the commutator of $Q_e$ and $\rho_e(\lambda(a))$ is a rank one operator.

\end{proof}

\noindent Define $V_e=2Q_e-1\in\mathcal{L}_{B_e^r}(K_e)$. We have $V_e^2=1$, $V_e=V_e^*$ and, for all $x\in P_{{\rm vert}}$, Lemma \ref{LemKK1} implies that $V_e\rho_e(x)-\rho_e(x)V_e\in\mathcal{K}_{B_e^r}(K_e)$. Hence we get an element $y_e^\Gr\in KK^1(P_{{\rm vert}},B_e^r)$. Define $x_e^\Gr=y_e^{\Gr}\underset{B_e^r}{\ot}[r_e^{-1}]\in KK^1(P_{{\rm vert}},B_e)$.
\begin{remark}\label{RmkFullKK1-1}
Note that we also have an element $z_e^\Gr=[\lambda]\underset{P_{{\rm vert}}}{\ot} x_e^\Gr\in KK^1(P,B_e)$.\end{remark}
\noindent Recall that for a subgraph $\mathcal{G}'\subset\mathcal{G}$ with a maximal subtree $\T'\subset\Gr'$ such that $\T'\subset\T$ we have the canonical unital faithful $*$-homomorphism $\pi_{{\rm vert}}^{\Gr'}\,:\, P_{{\rm vert}}^{\Gr'}\rightarrow P_{{\rm vert}}$ defined in Proposition \ref{Cor-vertId}.

\begin{proposition}\label{PropKK1}
For all connected subgraph $\Gr'\subset \Gr$ with maximal subtree $\T'\subset\T$, we have 
\begin{enumerate}
\item if $e\in E(\Gr')$ then $[\pi_{{\rm vert}}^{\Gr'}]\underset{P_{{\rm vert}}}{\ot}x_e^{\Gr}= x_e^{\Gr'}\in KK^1(P_{{\rm vert}}^{\Gr'},B_e)$,
\item if $e\notin E(\Gr')$ then $[\pi_{{\rm vert}}^{\Gr'}]\underset{P_{{\rm vert}}}{\ot}x_e^{\Gr}=0\in KK^1(P_{{\rm vert}}^{\Gr'},B_e)$,
\item $\sum_{ r(e)=p}x_e^{\Gr}\underset{B_e}{\ot} [r_e]=0\in KK^1(P_{{\rm vert}},A_{p})$ for all $p\in V(\Gr)$,
\item For all $e\in E(\Gr)$ we have $x_{\overline{e}}^{\Gr}=-x_e^{\Gr}$.
\end{enumerate}
\end{proposition}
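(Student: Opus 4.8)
The plan is to settle $(2)$ and $(1)$ by a direct analysis of the Kasparov modules, and to obtain $(3)$ and $(4)$ from the homotopy Lemma~\ref{Lem-Homotopy}. Since $[\pi_{{\rm vert}}^{\Gr'}]\underset{P_{{\rm vert}}}{\ot}x_e^{\Gr}=\big([\pi_{{\rm vert}}^{\Gr'}]\underset{P_{{\rm vert}}}{\ot}y_e^{\Gr}\big)\underset{B_e^r}{\ot}[r_e^{-1}]$, for $(2)$ it suffices to show that $(K_e,\rho_e\circ\pi_{{\rm vert}}^{\Gr'},V_e)$ is a trivial cycle when $e\notin E(\Gr')$. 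But then $\rho_e\circ\pi_{{\rm vert}}^{\Gr'}$ is the norm closure of $\rho_e\circ\lambda$ applied to the $*$-algebra $\mathcal{A}^{\Gr'}$, which by Remark~\ref{rem:pathmax} for $\Gr'$ (together with Lemma~\ref{LemProduct} to put products in reduced form) is spanned by the $A_q$, $q\in\V(\Gr')$, and reduced operators whose edges all lie in $E(\Gr')$, hence avoid $\{e,\overline e\}$; by Lemma~\ref{LemKK1}$(2)$ all of these commute with $Q_e$, so $V_e$ commutes with $\rho_e\circ\pi_{{\rm vert}}^{\Gr'}$ and the cycle is degenerate.

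For $(1)$ one first records that $\EE_e^{\Gr'}=\EE_e\circ\pi_{{\rm vert}}^{\Gr'}$: since $\EE_e=E_e^r\circ\EE_{A_{r(e)}}$ and $\EE_{A_p}=\EE_p\circ\lambda_{v,p}$, this follows from $\lambda_{v,p}\circ\pi_{{\rm vert}}^{\Gr'}=\pi_p^{\Gr'}\circ\lambda_{v,p}^{\Gr'}$ (a consequence of the defining intertwining relations and the surjectivity of $\lambda_{\Gr'}$) and from $\EE_p\circ\pi_p^{\Gr'}=\EE_p^{\Gr'}$ of Proposition~\ref{Cor-vertId}. Functoriality of the GNS construction then embeds $(K_e^{\Gr'},\rho_e^{\Gr'},\eta_e^{\Gr'})$ isometrically onto the $\rho_e\circ\pi_{{\rm vert}}^{\Gr'}$-invariant submodule $K_e'=\overline{\rho_e(\pi_{{\rm vert}}^{\Gr'}(P_{{\rm vert}}^{\Gr'}))\eta_e\cdot B_e^r}$, and, as $e\in E(\Gr')$, carries $Q_e^{\Gr'}$ to $Q_e|_{K_e'}$. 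Its complement $K_e''$ is spanned by the $\rho_e(\lambda(x))\eta_e$ with $x$ reduced from $r(e)$ to $r(e)$ having an edge outside $\Gr'$; by Lemma~\ref{LemProduct} such an edge survives left multiplication by any element of $\mathcal{A}^{\Gr'}$, so $K_e''$ is invariant and (again by Lemma~\ref{LemProduct}, as in the proof of Lemma~\ref{LemKK1}) $Q_e$ commutes with $\rho_e\circ\pi_{{\rm vert}}^{\Gr'}$ on it. Hence $(K_e,\rho_e\circ\pi_{{\rm vert}}^{\Gr'},V_e)$ splits as the cycle of $y_e^{\Gr'}$ plus a degenerate cycle, and $\underset{B_e^r}{\ot}[r_e^{-1}]$ yields $(1)$.

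For $(4)$, using $-y_e^{\Gr}=[(K_e,\rho_e,2(1-Q_e)-1)]$, the relation $x_e^{\Gr}+x_{\overline e}^{\Gr}=0$ becomes the vanishing of the cycle $\big((K_e\underset{r_e^{-1}}{\ot}B_e)\oplus(K_{\overline e}\underset{s_e^{-1}}{\ot}B_e),\ \rho_e\oplus\rho_{\overline e},\ 2Q-1\big)$ with $Q$ the projection onto $(\mathcal{R}_{e}\underset{r_e^{-1}}{\ot}B_e)\oplus(\mathcal{R}_{\overline e}\underset{s_e^{-1}}{\ot}B_e)$. By Lemma~\ref{Lem-Homotopy} it is enough to produce a Hilbert $B_e$-module $H$ with a representation $\pi$ of $P_{{\rm vert}}$ and a partial isometry $F\colon H\to(K_e\underset{r_e^{-1}}{\ot}B_e)\oplus(K_{\overline e}\underset{s_e^{-1}}{\ot}B_e)$ with range ${\rm Im}(Q)$, $F^*F-1\in\mathcal{K}$ and $F\pi(a)-(\rho_e\oplus\rho_{\overline e})(a)F\in\mathcal{K}$ for all $a$. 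The relation $u_er_e(b)=s_e(b)u_e$ exhibits a reduced word from $r(e)$ to $r(e)$ ending in $u_er_e(b)$ as a word from $r(e)$ to $s(e)$ to which the letter $u_e$ has been adjoined, and dually for $\overline e$; using the operators $u_e^p$ of Section~2 one splices these two families of half-words into a single module $H$ over $P_{{\rm vert}}$ in which $\mathcal{R}_{e}$ is the range of one half of $F$ and $\mathcal{R}_{\overline e}$ the range of the complementary half — this is where the minus sign is produced. Checking that such an $F$ intertwines the representations modulo $\mathcal{K}$ is a combinatorial computation in the spirit of Lemma~\ref{LemKK1}, and is the main obstacle of the proposition.

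For $(3)$, which reads $\sum_{r(e)=p}y_e^{\Gr}\underset{B_e^r}{\ot}[\iota_e]=0$ in $KK^1(P_{{\rm vert}},A_p)$ with $\iota_e\colon B_e^r\hookrightarrow A_p$ the inclusion, the same strategy applies: after identifying $K_e\underset{\iota_e}{\ot}A_p$ with $H_{p,p}\underset{E_e^r}{\ot}A_p$, Lemma~\ref{LemNorm} shows that $\mathcal{R}_{e}\underset{\iota_e}{\ot}A_p$ is isometric, via $\rho_e(\lambda(x))\eta_e\ot a\mapsto\lambda_{v,p}(x)\xi_p\cdot a$, to the submodule $N_e\subset H_{p,p}=\bigoplus_wH_w$ of words whose last edge is $e$; since every non-empty reduced word from $p$ to $p$ has a unique last edge $e$ with $r(e)=p$, this gives $\bigoplus_{r(e)=p}\big(\mathcal{R}_{e}\underset{\iota_e}{\ot}A_p\big)\cong H_{p,p}\ominus A_p$. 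Applying Lemma~\ref{Lem-Homotopy} with $H=H_{p,p}$, $\pi=\lambda_{v,p}$ and $F$ the partial isometry killing $\xi_p$ and equal to this isometry on $H_{p,p}\ominus A_p$ gives the claim, $F^*F-1$ being the rank-one projection onto $\xi_p\cdot A_p$ and the commutators $F\lambda_{v,p}(a)-(\bigoplus_e\rho_e\ot1)(a)F$ being compact by Lemma~\ref{LemProduct} exactly as in the proof of Lemma~\ref{LemKK1}$(3)$.
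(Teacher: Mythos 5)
Your treatments of items (1), (2) and (3) follow essentially the same route as the paper: for (2) the degeneracy via Lemma \ref{LemKK1}(2), for (1) the identification of the GNS module of $\EE_e^{\Gr'}$ with the complemented submodule $\overline{\rho_e(\pi_{{\rm vert}}^{\Gr'}(P_{{\rm vert}}^{\Gr'}))\eta_e\cdot B_e^r}$ plus a degenerate complement controlled by Lemma \ref{LemProduct}, and for (3) the partial isometry $F$ from $H_{p,p}$ onto $\bigoplus_{r(e)=p}\mathcal{R}_e\ot A_p$ (with $F^*F-1=-Q_{\xi_p}$ compact) followed by Lemma \ref{Lem-Homotopy}; this is exactly the paper's $F_p$, and the commutator estimate you defer to a ``Lemma \ref{LemProduct}-style computation'' is precisely the content of the paper's auxiliary Lemma \ref{LemFp}, so (3) is acceptable in outline even though that computation is the real work there.

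Item (4), however, contains a genuine gap. You reduce $x_e^\Gr+x_{\overline e}^\Gr=0$ to an application of Lemma \ref{Lem-Homotopy} for the direct-sum cycle, but the required data --- the Hilbert $B_e$-module $H$, the representation $\pi$ of $P_{{\rm vert}}$ on it, and the partial isometry $F$ with range ${\rm Im}(Q)$ and compact defects --- are never constructed: ``splicing the two families of half-words using the operators $u_e^p$'' does not specify $H$, $\pi$ or $F$, and you yourself flag the verification as ``the main obstacle of the proposition,'' which is exactly the part a proof must supply. The missing idea is the precise relation between $\mathcal{R}_e$ and $\mathcal{R}_{\overline e}$ under right multiplication by $u_e$: the paper uses the identity $\EE_{\overline e}(\lambda(x))=\lambda(u_e)\EE_e(\lambda(u_e^*xu_e))\lambda(u_e^*)$ to define a unitary $W_e\in\mathcal{L}_{B_e}\bigl(K_{\overline e}\underset{s_e^{-1}}{\ot}B_e,\,K_e\underset{r_e^{-1}}{\ot}B_e\bigr)$, $W_e(\rho_{\overline e}(\lambda(x))\eta_{\overline e}\ot b)=\rho_e(\lambda(xu_e))\eta_e\ot b$, which intertwines the representations and satisfies $W_e^*(Q_e\ot1)W_e=1-Q_{\overline e}\ot1$. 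With this unitary, (4) needs no homotopy at all: conjugating the cycle of $x_{\overline e}^\Gr$ by $W_e$ turns its symmetry $2Q_{\overline e}\ot1-1$ into $-(2Q_e\ot1-1)$, so the cycle of $x_{\overline e}^\Gr$ is isomorphic to the standard representative of $-x_e^\Gr$. (If you insist on your route, $W_e$ is still what you need: taking $H=K_e\underset{r_e^{-1}}{\ot}B_e$ and $F\xi=(Q_e\ot1)\xi\oplus(Q_{\overline e}\ot1)W_e^*\xi$ gives $F^*F=1$ and $FF^*=Q$, and the commutators are compact by Lemma \ref{LemKK1}(3); but without exhibiting $W_e$, or an equally explicit statement that ``ending with $u_eB_e^r$'' corresponds under right multiplication by $u_e$ to \emph{not} ending with $u_{\overline e}B_e^s$, your sketch of (4) is not a proof.)
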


\begin{proof}
Let $\Gr'\subset\Gr$ be a connected subgraph with maximal subtree $\T'\subset\T$ and $e\in E(\Gr)$.

\vspace{0.2cm}

\noindent $(1)$. Suppose that $e\in E(\Gr')$ (hence $\overline{e}\in E(\Gr')$). Recall that we have the canonical ucp map $\EE_{\Gr'}\,:\, P_{{\rm vert}}\rightarrow P_{{\rm vert}}^{\Gr'}$ from Proposition \ref{Cor-vertId}. Moreover, by definition of $\pi_{{\rm vert}}^{\Gr'}$ of we have $\EE_e^{\Gr'}=\EE_e\circ\pi_{{\rm vert}}^{\Gr'}$, where $\EE_e^{\Gr'}=E_e^r\circ\EE^{\Gr'}_{A_{r(e)}}$.

\noindent Let $(K_e,\rho_e,\eta_e)$ be the GNS construction of $\EE_{e}$ and define $K_e'=\overline{\rho_e\circ\pi_{{\rm vert}}^{\Gr'}(P_{{\rm vert}}^{\Gr'})\eta_e\cdot B_e^r}$. Observe that $K_e'$ is complemented. Indeed, we have $K_e'\oplus L_e=K_e$, where
$$L_e=\overline{\text{Span}}\{\rho_e(x)\eta_e\cdot b\,:\,b\in B_e^r\text{ and }x\in P_{{\rm vert}}\text{ such that }\EE_{\Gr'}(x)=0\}.$$
Let $R_e\in\mathcal{L}_{B_e^r}(K_e)$ be the orthogonal projection onto $K_e'$. Since $\rho_e\circ\pi_{{\rm vert}}^{\Gr'}(x)K_e'\subset K_e'$ for all $x\in P_{{\rm vert}}^{\Gr'}$, $R_e$ commutes with $\rho_e\circ\pi_{{\rm vert}}^{\Gr'}(x)$ for all $x\in P_{{\rm vert}}^{\Gr'}$. It is also easy to check that $R_e$ commutes with $Q_e$ hence with $V_e$.

\vspace{0.2cm}

\noindent Since $\EE_e^{\Gr'}=\EE_e\circ\pi_{{\rm vert}}^{\Gr'}$ the triple $(K_e',\rho_e',\eta_e')$, where $\rho_e'(x)=\rho_e\circ\pi_{{\rm vert}}^{\Gr'}(x)R_e$ for $x\in P_{{\rm vert}}^{\Gr'}$ and $\eta_e'=\eta_e$, is a GNS construction of $\EE^{\Gr'}_e$. Let $Q_e'\in\mathcal{L}_{B_e^r}(K_e')$ be the associated operator such that $x_e^{\Gr'}=[(K_e',\rho_e',V_e')]$, with $V_e'=2Q_e'-1$. By definition we have $Q_e'=Q_eR_e$ hence, $V_e'=V_eR_e$. It follows that $[\pi_{{\rm vert}}^{\Gr'}]\underset{P_{{\rm vert}}}{\ot} x_e^{\Gr}=x_e^{\Gr'}\oplus y$, where $y\in KK^1(P_{{\rm vert}}^{\Gr'},B_e)$ is represented by the triple $(L_e,\pi_e,V_e(1-R_e))$, where $\pi_e=\rho_e\circ\pi_{{\rm vert}}^{\Gr'}(\cdot)(1-R_e)$. To conclude the proof of $(1)$ it suffices to check that this triple is degenerated. Since $V_e$ and $(1-R_e)$ commute, $V_e(1-R_e)$ is self-adjoint and $(V_e(1-R_e))^2=1-R_e=\id_{L_e}$. Hence, it suffices to check that, for all $a\in P_{{\rm vert}}^{\Gr'}$,
$$(Q_e\rho_e\circ\pi_{{\rm vert}}^{\Gr'}(a)-\rho_e\circ\pi_{{\rm vert}}^{\Gr'}(a)Q_e)(1-R_e)=0.$$
We already know from assertion $(2)$ of Lemma \ref{LemKK1} that $Q_e\rho_e(\lambda(a))=\rho_e(\lambda(a))Q_e$ for all $a\in A_{r(e)}$ (and all $a\in X_e$). Let $a=a_nu_{e_n}\dots u_{e_1}a_0\in P_{\Gr'}$ and $b=b_0u_{f_1}\dots u_{f_m}b_m\in P$ be reduced operators from $r(e)$ to $r(e)$ and suppose that $\EE_{\Gr'}(\lambda(b))=0$. Hence, there exists $k\in\{1,\dots m\}$ such that $f_k\notin E(\Gr')$ and it follows that the integer $n_0$ associated to the pair $(\pi_{\Gr'}(a),b)$ in Lemma \ref{LemProduct} satisfies $n_0<k$ since $e_l\in E(\Gr')$ for all $l\in\{1,\dots n\}$. Applying Lemma \ref{LemProduct} in case $(5)$, we see that $\pi_{\Gr'}(a)b$ is a sum of reduced operators that end with $u_{f_m}b_m$. Hence, $\rho_e(\lambda(b))\eta_e\in\mathcal{R}_{e}\implies \rho_e(\lambda(\pi_{\Gr'}(a)b))\eta_e\in\mathcal{R}_{e}$ and $\rho_e(\lambda(b))\eta_e\in\mathcal{L}_{e}\implies \rho_e(\lambda(\pi_{\Gr'}(a)b))\eta_e\in\mathcal{L}_{e}$. It follows that
$$[Q_e\rho_e(\pi_{{\rm vert}}^{\Gr'}(\lambda_{\Gr'}(a)))-\rho_e(\pi_{{\rm vert}}^{\Gr'}(\lambda_{\Gr'}(a)))Q_e]\rho_e(\lambda(b))\eta_e$$
$$=[Q_e\rho_e(\lambda(\pi_{\Gr'}(a)))-\rho_e(\lambda(\pi_{\Gr'}(a)))Q_e]\rho_e(\lambda(b))\eta_e=0.$$
This concludes the proof of $(1)$.

\vspace{0.2cm}

\noindent $(2)$. Suppose that $e\notin E(\Gr')$  (hence $\overline{e}\notin E(\Gr')$). The element $[\pi_{{\rm vert}}^{\Gr'}]\underset{P_{{\rm vert}}}{\ot}x_e^{\Gr}$ is represented by the triple $(K_e,\pi_e,V_e)$, where $\pi_e=\rho_e\circ\pi_{{\rm vert}}^{\Gr'}$. Since $V_e^2=1$ and $V_e^*=V_e$, it suffices to show that $Q_e$ commutes with $\rho_e(\pi_{{\rm vert}}^{\Gr'}(x))$ for all $x\in P_{{\rm vert}}^{\Gr'}$. It follows from assertion $(2)$ of Lemma \ref{LemKK1} since $e,\overline{e}\notin E(\Gr')$ implies $\pi_{{\rm vert}}^{\Gr'}(P_{{\rm vert}}^{\Gr'})\subset\overline{{\rm Span}}\left(\lambda(A_{r(e)})\cup \lambda(X_e)\right)$.

\vspace{0.2cm}

\noindent $(3)$. For $p\in V(\Gr)$ we use the notation $(H_p,\pi_p,\xi_p):=(H_{p,p},\lambda_{v,p},\xi_p)$ for the GNS construction of the canonical ucp map $\EE_{A_p}\,:\,P_{{\rm vert}}\rightarrow A_p$. Observe that $\xi_p\cdot A_p$ is orthogonally complemented in $H_p$ and set $H_p^\circ=H_p\ominus\xi_p\cdot A_p$. Define $K_p=\bigoplus_{e\in E(\Gr),r(e)=p}K_e\underset{B_e^r}{\ot}A_p$ and observe that, by Lemma \ref{LemNorm}, we have an isometry $F_p\in\mathcal{L}_{A_p}(H_p^\circ,K_p)$ defined by
$$F_p(\pi_p(\lambda(a_0u_{e_1}\dots u_{e_n}a_n))\xi_p)=\rho_{e_n}(\lambda(a_0u_{e_1}\dots u_{e_n}))\eta_{e_n}\ot a_n,$$
for all $a_0u_{e_1}\dots u_{e_n}a_n\in P$ reduced operator from $p$ to $p$. We extend $F_p$ to partial isometry, still denoted $F_p\in\mathcal{L}_{A_p}(H_p,K_p)$ by $F_p\vert_{\xi_p\cdot A_p}=0$. Then $F_p^*F_p=1-Q_{\xi_p}$, where $Q_{\xi_p}\in\mathcal{L}_{A_p}(H_p)$ is the orthogonal projection onto $\xi_p\cdot A_p$. Moreover, $F_pF_p^*=\bigoplus_{e\in E(\Gr),r(e)=p}Q_e\ot 1$.

\vspace{0.2cm}

\noindent Define $\rho_p=\bigoplus_{e\in E(\Gr),r(e)=p}\rho_e\ot 1\,:\, P_{{\rm vert}}\rightarrow \mathcal{L}_{A_p}(K_p)$.

\begin{lemma}\label{LemFp}
For any $a\in P$ we have $(F_p\pi_p(\lambda(a))-\rho_p(\lambda(a))F_p)\in\mathcal{K}_{A_p}(H_p,K_p)$.
\end{lemma}

\begin{proof}
It suffices to prove the lemma for any $a=a_nu_{e_n}\dots u_{e_1}a_0\in P$ reduced operator from $p$ to $p$ since, for $a\in A_p$ one has $F_p\pi_p(\lambda(a))=\rho_p(\lambda(a))F_p$. We may and will assume that $r(e_k)\neq p$ for all $k\neq 1$ since reduced operator from $p$ to $p$ may be written as the product of such operators. Fix such an operator $a$ and, for $b\in P$ write $X(b)=(F_p\pi_p(\lambda(a))-\rho_p(\lambda(a))F_p)(\pi_p(\lambda(b))\xi_p)$. If $b\in A_p$ then $F_p\pi_p(\lambda(b))\xi_p=0$ and $ab=a_nu_{e_n}\dots u_{e_1}a_0b\in P$ is reduced from $p$ to $p$. Hence, $F_p\pi_p(\lambda(ab))\xi_p=\rho_{e_1}(\lambda(a_nu_{e_n}\dots u_{e_1}))\eta_{e_1}\ot a_0b$ and we have
\begin{eqnarray*}
X(b)&=&(\rho_{e_1}(\lambda(a_nu_{e_n}\dots u_{e_1}))\eta_{e_1}\ot 1)\cdot a_0b\\
&=&(\rho_{e_1}(\lambda(a_nu_{e_n}\dots u_{e_1}))\eta_{e_1}\ot 1)\cdot\langle\pi_p(\lambda(a_0^*))\xi_p,\pi_p(\lambda(b))\xi_p\rangle.
\end{eqnarray*}

\noindent Suppose that $b=b_0u_{f_1}\dots u_{f_m}b_m\in P$ is a reduced operator from $p$ to $p$ and write $b=b'b_m$, where $b'=b_0u_{f_1}\dots u_{f_m}$. Let $0\leq n_0\leq{\rm min}\{n,m\}$ and, for $1\leq k\leq n_0$, $x_k\in A_{s(e_k)}$ be the data associated to the couple $(a,b')$ in Lemma \ref{LemProduct}. By Lemma  \ref{LemProduct} we can write $ab'=y+z$, where $y$ is either reduced and ends with $u_{f_m}$ or is a sum of reduced operators that end with $u_{f_m}$ and:
$$z=\left\{\begin{array}{ll}a_nu_{e_n}\dots u_{e_{m+1}}x_m&\text{if }n_0=m<n,\\
x_n&\text{if }n_0=n=m,\\
0&\text{if }n_0=0\text{ or }n_0=n<m\text{ or }1\leq n_0<\text{min}\{n,m\}.\end{array}\right.$$
Since $y$ is a sum of reduced operators ending with $u_{f_m}$ we have $F_p\pi_p(\lambda(y))\xi_p=\rho_{f_m}(\lambda(y))\eta_{f_m}\ot 1$ and,
\begin{eqnarray*}
X(b)&=&F_p\pi_p(\lambda(ab'))\xi_p\cdot b_m-\rho_{f_m}(\lambda(ab'))\eta_{f_m}\ot b_m\\
&=&F_p\pi_p(\lambda(y))\xi_p\cdot b_m-\rho_{f_m}(\lambda(y))\eta_{f_m}\ot b_m+F_p\pi_p(\lambda(z))\xi_p\cdot b_m-\rho_{f_m}(\lambda(z))\eta_{f_m}\ot b_m\\
&=&F_p\pi_p(\lambda(z))\xi_p\cdot b_m-\rho_{f_m}(\lambda(z))\eta_{f_m}\ot b_m.
\end{eqnarray*}
\noindent Hence, if $n_0=0$, $n_0=n<m$ or $1\leq n_0<\text{min}\{n,m\}$ then $X(b)=0$.

\noindent Note that if $n_0=m<n$ then $\overline{f}_m=e_m$ which implies that $r(e_{m+1})=s(e_m)=r(f_m)=p$ which does not happen with our hypothesis on $a$. 

\noindent Finally, if $n_0=n=m$ then $z=x_n=a_ns_{e_n}\circ r_{e_n}^{-1}\circ E_{e_n}^r(x_{n-1})\in a_n B_{\overline{e}_n}^r$ and, since $f_m=f_n=\overline{e}_n$, we have $\rho_{f_m}(\lambda(z))\eta_{f_m}\ot b_m=\rho_{\overline{e}_n}(\lambda(x_n))\eta_{\overline{e}_n}\ot b_n\in(\rho_{\overline{e}_n}(\lambda(a_n))\eta_{\overline{e}_n}\ot 1)\cdot A_p$ and $F_p\pi_p(\lambda(z))\xi_p\cdot b_m=F_p\pi_p(\lambda(x_n))\xi_p\cdot b_m=0$. Hence,
\begin{eqnarray*}
X(b)&=&-\rho_{\overline{e}_n}(\lambda(x_n))\eta_{\overline{e}_n}\ot b_n=-\rho_{\overline{e}_n}(\lambda(a_n))\eta_{\overline{e}_n}\ot s_{e_n}\circ r_{e_n}^{-1}\circ E_{e_n}^r(x_{n-1})b_n\\
&=&-(\rho_{\overline{e}_n}(\lambda(a_n))\eta_{\overline{e}_n}\ot 1)\cdot\langle\pi_p(\lambda(a')^*)\xi_p,\pi_p(\lambda(b))\xi_p\rangle,
\end{eqnarray*}
where $a'=u_{e_n}a_{n-1}\dots u_{e_1}a_0$. It follows that, for any reduced operator $b\in P$ from $p$ to $p$ and for any $b\in A_p$ the element $X(b)$ is equal to
$$(\rho_{e_1}(\lambda(a_nu_{e_n}\dots u_{e_1}))\eta_{e_1}\ot 1)\cdot\langle\pi_p(\lambda(a_0^*))\xi_p,\pi_p(\lambda(b))\xi_p\rangle-(\rho_{\overline{e}_n}(\lambda(a_n))\eta_{\overline{e}_n}\ot 1)\cdot\langle\pi_p(\lambda(a')^*)\xi_p,\pi_p(\lambda(b))\xi_p\rangle.$$
Hence, $F_p\pi_p(\lambda(a))-\rho_p(\lambda(a))F_p$ is a finite rank operator.
\end{proof}

\noindent Since $F_p$ is a partial isometry satisfying $F_pF_p^*-1=-Q_{\xi_p}\in\mathcal{K}_{A_p}(H_p)$, It follows from Lemma \ref{LemFp} that we can apply Lemma \ref{Lem-Homotopy} to conclude that $[(K_p,\rho_p,V_p)]=0\in KK^1(P_{{\rm vert}},A_p)$, where $V_p=2F_pF_p^*-1=\bigoplus_{e\in E(\Gr),\,r(e)=p} V_e\ot 1$ and $V_e$ has been defined previously by $V_e=2Q_e-1$. It follows from the definitions that $(K_p,\rho_p,V_p)$ is a triple representing the element $\sum_{ r(e)=p}x_e^{\Gr}\underset{B_e}{\ot} [r_e]$. This concludes the proof of $(3)$.

\vspace{0.2cm}

\noindent$(4)$. Note that, for all $e\in E(\Gr)$ and all $x\in P$, we have $\EE_{\overline{e}}(\lambda(x))=\lambda(u_e)\EE_e(\lambda(u_e^*x u_e))\lambda(u_e^*)$. It follows from this formula that the operator $W_e\,:\,K_{\overline{e}}\underset{s_e^{-1}}{\ot} B_e\rightarrow K_e\underset{r_e^{-1}}{\ot} B_e$ defined by
$$W_e(\rho_{\overline{e}}(\lambda(x))\eta_{\overline{e}}\ot b)=\rho_{e}(\lambda(xu_{e}))\eta_e\ot b\quad\text{for }x\in P\text{  and }b\in B_e,$$
is a unitary operator in $\mathcal{L}_{B_e}(K_{\overline{e}}\underset{s_e^{-1}}{\ot} B_e,K_e\underset{r_e^{-1}}{\ot} B_e)$. Moreover, it is clear that $W_e$ intertwines the representations $\rho_e(\cdot)\ot 1$ and $\rho_{\overline{e}}(\cdot)\ot 1$ and we have $W_e^*(Q_e\ot 1)W_e=1\ot1-Q_{\overline{e}}\ot 1$.

\end{proof}

\begin{remark}\label{RmkFullKK1-2}
Assertions $(2)$ and $(3)$ of the preceding Proposition obviously hold for the elements $z_e^\Gr=[\lambda]\underset{P_{{\rm vert}}}{\ot} x_e^\Gr\in KK^1(P,B_e)$ and also assertions $(1)$ and $(2)$ with $\pi_{\Gr'}$ instead of $\pi_{{\rm vert}}^{\Gr'}$ since we have $\pi_{{\rm vert}}^{\Gr'}\circ\lambda_{\Gr'}=\lambda\circ\pi_{\Gr'}$ for any connected subgraph $\Gr'\subset\Gr$, with maximal subtree $\mathcal{T}'\subset\mathcal{T}$.\end{remark}

\noindent We study now in details the behavior of our elements $x_e^\Gr$ under the \textit{Serre's devissage} process.

\vspace{0.2cm}

\noindent \textbf{The case of an amalgamated free product.} Let $A_1$, $A_2$ and $B$ be C*-algebras with unital faithful $*$-homomorphisms $\iota_k\,:\,B\rightarrow A_k$ and conditional expectations $E_k\,:\,A_k\rightarrow\iota_k(B)$ for $k=1,2$. Let $A_v=A_1\underset{B}{\overset{v}{*}} A_2$ be the associated vertex-reduced amalgamated free product, $A_f=A_1\underset{B}{*}A_2$ the full amalgamated free product and $\pi\,:\, A_f\rightarrow A_v$ the canonical surjection. Let $(K,\rho,\eta)$ be the GNS construction of the canonical ucp map $E\,:\,A_v\rightarrow B$ (which is the composition of the canonical surjection from $A$ to the edge-reduced amalgamated free product with the canonical ucp map from the edge-reduced amalgamated free product to $B$) and  $K_i$, for $i=1,2$, be the closed subspace of $K$ generated by $\{\rho(\pi(x))\eta\,:\,x=a_1\dots a_n\in A_f\text{ reduced and ends with }A_i\ominus B\}$. Observe that $K_i$ is a complemented Hilbert submodule of $K$. Actually we have $K=K_1\oplus K_2\oplus\eta\cdot B$. Let $Q_i\in\mathcal{L}_B(K)$ be the orthogonal projection onto $K_i$. The following Proposition is actually a special case of Lemma \ref{LemKK1}. In this special case the proof is very easy and left to the reader.

\begin{proposition}\label{Prop-KK1Free} $(K,\rho,V)$, where $V=2Q_1-1$ defines an element $x_A=[(K,\rho,V)]\in KK^1(A_v,B)$.\end{proposition}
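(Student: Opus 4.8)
An element of $KK^1(A_v,B)$ is represented by a triple $(K,\rho,V)$ with $K$ a Hilbert $B$-module, $\rho\,:\,A_v\rightarrow\LL_B(K)$ a $*$-homomorphism, and $V\in\LL_B(K)$ such that $V-V^*$, $V^2-1$ and $V\rho(a)-\rho(a)V$ lie in $\K_B(K)$ for every $a\in A_v$. Since $Q_1$ is a self-adjoint idempotent, $V=2Q_1-1$ satisfies $V=V^*$ and $V^2=1$ on the nose, so the only thing to check is that $Q_1\rho(a)-\rho(a)Q_1\in\K_B(K)$ for all $a\in A_v$. The set $J$ of all such $a$ is a norm-closed $*$-subalgebra of $A_v$: it is norm-closed because $a\mapsto Q_1\rho(a)-\rho(a)Q_1$ is contractive and $\K_B(K)$ is closed, it is a subalgebra by the Leibniz identity $[Q_1,\rho(ab)]=[Q_1,\rho(a)]\rho(b)+\rho(a)[Q_1,\rho(b)]$, and it is $*$-closed since $[Q_1,\rho(a^*)]=-[Q_1,\rho(a)]^*$. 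Because $A_v$ is generated by $A_1$ and $A_2$ and $A_k=\iota_k(B)\oplus(A_k\ominus B)$ with $A_k\ominus B=\ker E_k$, it is enough to prove that $\iota_k(B)\subset J$ and $A_k\ominus B\subset J$ for $k=1,2$.

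The plan is to determine, for $a$ of each of these types, the block form of $\rho(a)$ with respect to the orthogonal decomposition $K=K_1\oplus K_2\oplus\eta\cdot B$, using the description of the vectors $\rho(\pi(x))\eta$ for reduced words $x$, the rule for multiplying two reduced words, and the facts that each $E_k$ is a $B$-bimodule conditional expectation vanishing on $A_k\ominus B$ and that the canonical ucp map $E\,:\,A_v\rightarrow B$ kills all reduced words of positive length and restricts to $E_k$ on $A_k$. First, for $b\in B$ one checks that absorbing $\iota_k(b)$ into the first letter $a_1$ of a reduced word never shortens it, since $E_k(\iota_k(b)a_1)=\iota_k(b)E_k(a_1)=0$ when $a_1\in\ker E_k$; hence $\rho(\iota_k(b))$ preserves each of $K_1$, $K_2$ and $\eta\cdot B$, so $[Q_1,\rho(\iota_k(b))]=0$ and $\iota_k(B)\subset J$. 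Next, for $a\in A_2\ominus B$ one finds that $\rho(a)$ preserves $K_1$ and maps both $K_2$ and $\eta\cdot B$ into $K_2\oplus\eta\cdot B$; as $K_1$ is orthogonal to $K_2\oplus\eta\cdot B$ this forces $Q_1$ to commute with $\rho(a)$ exactly, so $A_2\ominus B\subset J$. Finally, for $a\in A_1\ominus B$ one computes that $\rho(a)$ preserves $K_2$, sends $\eta c\mapsto(\rho(\pi(a))\eta)\cdot c$ into $K_1$, and sends $K_1$ into $K_1\oplus\eta\cdot B$, the component into $\eta\cdot B$ being supported on the length-one words by $\rho(\pi(a_1))\eta\mapsto\eta\,E_1(aa_1)$; these two off-diagonal blocks are precisely the rank-one operators $\theta_{\rho(\pi(a))\eta,\,\eta}$ and $\theta_{\eta,\,\rho(\pi(a^*))\eta}$, so $[Q_1,\rho(a)]$ has rank at most two, hence lies in $\K_B(K)$, and $A_1\ominus B\subset J$.

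Combining these three facts yields $J=A_v$, which proves the statement. The only delicate point is the last case: one has to track how $a\cdot(a_1\cdots a_n)$ reduces and verify that a vacuum component appears only for $n=1$, which is a short induction on $n$ using that $E_1$ vanishes on $A_1\ominus B$; everything else is mechanical. I expect no genuine obstacle here — the whole computation is the specialization of Lemma~\ref{LemKK1} and its proof to the graph consisting of one pair of opposite edges with distinct endpoints, with $Q_1$ in the role of $Q_e$ — and carrying it out directly is quicker than unwinding that lemma.
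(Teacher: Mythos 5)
Your argument is correct and is essentially the proof the paper has in mind: the paper merely remarks that the statement is the special case of Lemma~\ref{LemKK1} for a graph with one geometric edge and two distinct endpoints and leaves the ``very easy'' verification to the reader, and your block computation with respect to $K=K_1\oplus K_2\oplus\eta\cdot B$ (exact commutation on $\iota_k(B)$ and $A_2\ominus B$, a rank-two commutator on $A_1\ominus B$) is precisely that verification, i.e.\ the specialization of the proof of Lemma~\ref{LemKK1} with $Q_1$ playing the role of $Q_e$.
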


\noindent Let $e\in E(\Gr)$ and suppose that $\Gr_e$ is not connected. We keep the same notations as the one used in the Serre's devissage process explained in the previous Section. In particular we have the $*$-isomorphism $\nu_e\,:\, A_{\Gr_e}:=P_{{\rm vert}}^{\Gr_{s(e)}}\underset{B_e}{\overset{v}{*}}P_{{\rm vert}}^{\Gr_{r(e)}}\rightarrow P_{{\rm vert}}$ from Lemma \ref{DevissageFree}. We now have two canonical elements in $KK^1(P_{{\rm vert}},B_e)$: $x_e^\Gr$ and $x_{\Gr_e}:=[\nu_e^{-1}]\underset{A_{\Gr_e}}{\ot} y_{\Gr_e}$, where $y_{\Gr_e}$ is the element associated to the vertex-reduced amalgamated free product $A_{\Gr_e}$ constructed in Proposition \ref{Prop-KK1Free}. These two elements are actually equal.

\begin{lemma}\label{LemIsoFree} We have $x_{\Gr_e}=x_e^{\Gr}\in KK^1(P_{{\rm vert}},B_e)$.\end{lemma}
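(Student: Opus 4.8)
\textbf{Proof plan for Lemma \ref{LemIsoFree}.}
The plan is to exhibit an explicit unitary identification of the Kasparov modules representing $x_e^\Gr$ and $x_{\Gr_e}$ and to check that this unitary intertwines all the structure (the left actions on $P_{{\rm vert}}$ via $\nu_e$, and the two self-adjoint operators). Recall that $x_e^\Gr = y_e^\Gr \underset{B_e^r}{\ot}[r_e^{-1}]$, where $y_e^\Gr = [(K_e,\rho_e,V_e)]$ with $K_e$ the GNS module of $\EE_e = E_e^r\circ\EE_{A_{r(e)}}$, $V_e = 2Q_e-1$, and $Q_e$ the projection onto the "words ending with $e$". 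On the other side, $x_{\Gr_e} = [\nu_e^{-1}]\underset{A_{\Gr_e}}{\ot} y_{\Gr_e}$ with $y_{\Gr_e} = [(K,\rho,V)]$, $V = 2Q_1-1$, and $K$ the GNS module of the canonical ucp map $E\,:\,A_{\Gr_e}\to B_e$, where $A_{\Gr_e} = P_{{\rm vert}}^{\Gr_{s(e)}}\underset{B_e}{\overset{v}{*}}P_{{\rm vert}}^{\Gr_{r(e)}}$. The first step is to notice that, because $e\in E(\mathcal{T})$ in Case 1 so that $u_e = 1$ and $s_e(b) = r_e(b)$ in $P$, the range algebra $r(e)$ plays the role of the "$A_1$" side of the free product; I would identify $\EE_e$ (pulled back via $\nu_e$) with the ucp map $E\,:\,A_{\Gr_e}\to B_e$ composed with $r_e^{-1}$, using the intertwining relations $\EE_{\Gr_{s(e)}}\circ\nu_e = \EE_1$, $\EE_{\Gr_{r(e)}}\circ\nu_e = \EE_2$ from Lemma \ref{DevissageFree} together with $\EE_e = E_e^r\circ\EE_{A_{r(e)}}$ and the formula for $E_{r(e)}$.

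Next I would construct the intertwining unitary $U\,:\,K\underset{r_e^{-1}}{\ot}B_e \to K_e$ (or rather between the two GNS modules over $B_e$, after tensoring the free-product side by $[r_e^{-1}]$) on the natural dense subsets: on vectors $\rho(\pi(x))\eta$ coming from reduced words $x$ in the free product $A_f = P^{\Gr_{s(e)}}\underset{B_e}{*}P^{\Gr_{r(e)}}$, send them to $\rho_e(\lambda(\widetilde{x}))\eta_e$ where $\widetilde{x}$ is the corresponding reduced operator in $P$ obtained by translating alternating letters from the two vertex-groups into reduced operators from $r(e)$ to $r(e)$ in $\Gr$. Since $u_e = 1$, a reduced word in $A_f$ alternating between the $\Gr_{s(e)}$- and $\Gr_{r(e)}$-sides corresponds precisely to a reduced operator in $P$ whose path alternates between the two components of $\Gr_e$, the two kinds of "letters" matching up via the identification of $B_e^s$ and $B_e^r$ inside $P$. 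That $U$ is a well-defined isometry follows by comparing inner products: both are computed by iterating $s_e^{-1}\circ E_e^s$ / $r_e^{-1}\circ E_e^r$ down the word, and these agree by Lemma \ref{LemNorm} and the definitions of $E_{s(e)}$, $E_{r(e)}$. Surjectivity follows because every reduced operator in $P$ from $r(e)$ to $r(e)$ arises this way (Remark \ref{rem:redmax}), and $U$ intertwines $\rho\circ\nu_e^{-1}(\cdot)\ot 1$ with $\rho_e$ by construction.

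Finally I would check that $U$ carries $Q_1\ot 1$ to $Q_e$: a reduced word in $A_f$ "ends with $A_1\ominus B = P^{\Gr_{r(e)}}\ominus B_e$" exactly when the corresponding reduced operator in $P$ ends with an edge in $\Gr_{r(e)}$, which — after the translation accounting for $u_e = 1$ gluing the last $\Gr_{r(e)}$-letter to the stable letter — is exactly the condition defining $\mathcal{R}_e$, namely ending with $e$ and last coefficient in $B_e^r$. Concretely, a letter $a\in P^{\Gr_{r(e)}}$ with $E_{r(e)}(a) = 0$, when viewed in $P$, contributes a reduced operator from $r(e)$ to $r(e)$; reading it as starting from $s(e) = $ the far vertex and coming back through $e$, one sees $\rho_e(\lambda(\cdot))\eta_e\in\mathcal{R}_e$. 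Hence $U V U^* = V_e$, so the triples are unitarily equivalent after tensoring by $[r_e^{-1}]$, giving $x_{\Gr_e} = x_e^\Gr$.

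The main obstacle I anticipate is the bookkeeping in the dictionary between reduced words in the amalgamated free product $A_f$ and reduced operators from $r(e)$ to $r(e)$ in $P$: one must be careful that the two "$B_e$"s ($B_e^s$ and $B_e^r$, identified in $P$ because $u_e=1$) are matched correctly, that the alternation condition for reducedness on the free-product side matches the alternation-through-$e$ condition in $\Gr$, and that the vector $\eta$ (resp. $\eta_e$) and the coefficient algebras $B_e$ vs.\ $B_e^r$ are related by exactly the twist $[r_e^{-1}]$ that appears in the definition of $x_e^\Gr$. Once the dictionary is set up cleanly, verifying that $U$ is unitary and intertwines everything is a routine check of inner products and the combinatorics of Lemma \ref{LemProduct}; the identification $Q_1 \leftrightarrow Q_e$ is then immediate from how the dictionary treats the last letter.
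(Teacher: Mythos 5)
Your overall strategy is the same as the paper's: no homotopy is needed, only an isomorphism of Kasparov triples, obtained from the equality of the two ucp maps $r_e^{-1}\circ\EE_e=E\circ\nu_e^{-1}$ on $P_{{\rm vert}}$ (a consequence of Lemma \ref{DevissageFree}), and the unitary $U$ you describe is exactly the one used in the paper; your isometry check via iterated expectations is a more laborious but acceptable substitute for simply invoking uniqueness of the GNS construction. The genuine problem is in your last step, the identification of the projections, where you have exchanged the two free factors. By the definition of $x_{\Gr_e}$ (Lemma \ref{DevissageFree} together with Proposition \ref{Prop-KK1Free}), the first factor $A_1$ of $A_{\Gr_e}=P_{{\rm vert}}^{\Gr_{s(e)}}\underset{B_e}{\overset{v}{*}}P_{{\rm vert}}^{\Gr_{r(e)}}$ is $P_{{\rm vert}}^{\Gr_{s(e)}}$ (the factor receiving $s_e(B_e)$), so $Q_1$ projects onto reduced words whose last letter lies in $P_{{\rm vert}}^{\Gr_{s(e)}}\ominus B_e$, not in $P_{{\rm vert}}^{\Gr_{r(e)}}\ominus B_e$ as you write. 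Moreover your translation of the ending condition is wrong: a word whose last letter comes from the $r(e)$-side corresponds, under $\nu_e$ (recall $u_e=u_{\overline{e}}=1$ since $e\in E(\T)$ in Case 1), to reduced operators from $r(e)$ to $r(e)$ whose last edge lies in $\Gr_{r(e)}$, hence with $e_n\neq e$; such vectors lie in $\mathcal{L}_e$, the orthogonal complement of $\mathcal{R}_e$. In particular, a letter $a\in P_{{\rm vert}}^{\Gr_{r(e)}}$ with $E_{r(e)}(a)=0$ gives $\rho_e(\lambda(a))\eta_e\in\mathcal{L}_e$: it cannot be ``read as coming back through $e$'', since inserting $u_{\overline{e}}\,1\,u_e$ violates reducedness ($E_e^s(1)\neq 0$), and there is no stable letter here (that belongs to the HNN case).

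The correct dictionary is the opposite one: a reduced word of the free product ending with a letter in $P_{{\rm vert}}^{\Gr_{s(e)}}\ominus B_e$, rewritten as a reduced operator from $r(e)$ to $r(e)$ in $P$, must cross back to $r(e)$ through the edge $e$ at the very last step, so it ends with $e_n=e$ and $a_n\in B_e^r$, i.e.\ it lands in $\mathcal{R}_e$, while words ending on the $r(e)$-side, as well as $A_{r(e)}$ itself, land in $\mathcal{L}_e$. This is not an innocuous relabelling: since $1-Q_1-Q_2$ is the rank-one projection onto $\eta\cdot B_e$, the class built from $2Q_2-1$ is $-y_{\Gr_e}$, so your bookkeeping would at best prove $x_e^{\Gr}=-x_{\Gr_e}$ (compare assertion (4) of Proposition \ref{PropKK1}, $x_{\overline{e}}^{\Gr}=-x_e^{\Gr}$, which is exactly what exchanging the two sides amounts to), and the sign matters for the boundary maps $\sum_e s_e^*-r_e^*$ in Theorem \ref{exactseqgraph}. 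Once the factors are matched correctly ($s(e)$-side letters $\leftrightarrow$ $\mathcal{R}_e$), the rest of your argument goes through and coincides with the paper's proof.
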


\begin{proof}
The proof is a simple identification: there is not a single homotopy to write, only an isomorphism of Kasparov's triples. The key of the proof is to realize that the two ucp maps $P_{{\rm vert}}\rightarrow B_e$ defined by $\varphi=r_e^{-1}\circ\EE_e$ and $\psi=E\circ\nu_e^{-1}$ are equal, where $E\,:\, A_{\Gr_e}\rightarrow B_e$ is the canonical ucp map and it directly follows from the fact that $\nu_e$ intertwines the canonical ucp maps. Having this observation in mind, one construct an isomorphism of Kasparov's triples.

\noindent Recall that $(K_e,\rho_e,\eta_e)$ denotes the GNS construction of the ucp map $\EE_e\,:\, P_{{\rm vert}}\rightarrow B_e^r$ and $(K,\rho,\eta)$ denotes the GNS of the ucp map $E\,:\, A_{\Gr_e}\rightarrow B_e$.

\vspace{0.2cm}

\noindent Since $K=\overline{\rho\circ\nu^{-1}_{e}(P_{{\rm vert}})\eta\cdot B_e}$, $K_e\underset{r_e^{-1}}{\ot} B_e=\overline{\rho_e(P_{{\rm vert}})\eta_e\ot 1\cdot B_e}$ and
$$\langle \eta,\rho\circ\nu^{-1}_{e}(x)\eta\rangle_K=\psi(x)=\varphi(x)=\langle\eta_e\ot 1,\rho_e(x)\eta_e\ot 1\rangle_{K_e\underset{r_e^{-1}}{\ot} B_e}\quad\text{for all }x\in P_{{\rm vert}},$$
it follows that the map $U\,:\,K\rightarrow K_e\underset{r_e^{-1}}{\ot} B_e$, $U(\rho\circ\nu^{-1}_{e}(x))\eta\cdot b)=\rho_e(x)\eta_e\ot 1\cdot b$ for $x\in P_{{\rm vert}}$ and $b\in B_e$, defines a unitary $U\in\mathcal{L}_{B_e}(K,K_e\underset{r_e^{-1}}{\ot} B_e)$. Moreover, $U$ intertwines the representations $\rho\circ\nu^{-1}_{e}$ and $\rho_e(\cdot)\ot 1$. Observe that $x_{\Gr_e}$ is represented by the triple $(K,\rho\circ\nu^{-1}_{e},V)$, where $V=2Q-1$ and $Q$ is the orthogonal projection on the closed linear span of the $\rho(\pi(x_1\dots x_n))$, where $x_1\dots x_n\in P^{\Gr_{s(e)}}_{{\rm vert}}\underset{B_e}{*} P^{\Gr_{r(e)}}_{{\rm vert}}$ is a reduced operator in the free product sense and $x_n\in P^{\Gr_{s(e)}}_{{\rm vert}}$. Moreover, $x_e^{\Gr}$ is represented by the triple $(K_e\underset{r_e^{-1}}{\ot} B_e,\rho_e(\cdot)\ot 1,V_e)$, where $V_e=Q_e\ot 1$ and $Q_e$ is the orthogonal projection onto the closed linear span of the $\rho_e(\lambda(a_0u_{e_1}\dots u_{e_n}a_n))\eta_e$, where $a_0u_{e_1}\dots u_{e_n}a_n\in P$
 is reduced from $r(e)$ to $r(e)$ with $e_n=e$ and $a_n\in B_e^r$.
 
 \vspace{0.2cm}
 
 \noindent To conclude the proof, it suffices to observe that $UVU^*=V_e$.\end{proof}
\noindent We study now the case of an HNN-extension.

\vspace{0.2cm}

\noindent \textbf{The case of an HNN extension.} For $\epsilon\in\{-1,1\}$, let $\pi_{\epsilon}\,:\,B\rightarrow A$ be a unital faithful $*$-homomorphism  $E_\epsilon\,:\,A\rightarrow B$ be a ucp map such that $E_{\epsilon}\circ\pi_{\epsilon}=\id_B$. Let $C_f$ be the full HNN-extension with stable letter $u\in\mathcal{U}(C)$, $C_v$ the vertex-reduced HNN-extension and $\pi\,:\, C_f\rightarrow C_v$ the canonical surjection. Let $(K,\rho,\eta)$ be the GNS construction of the ucp map $E=E_1\circ E_A\,:\,C_v\rightarrow B$, where $E_A\,:\,C_v\rightarrow A$ is the canonical GNS-faithful ucp map.  Define the sub $B$-module
$$K_+=\overline{\text{Span}}\{\rho(\pi(x))\eta\,:\,x=a_0u^{\epsilon_1}\dots u^{\epsilon_n}a_n\in C_f\text{ is a reduced operator with }\epsilon_n=1\text{ and }a_n\in\pi_1(B)\}.$$
Observe that $K_+$ is complemented and let $Q_+\in\mathcal{L}_B(K)$ be the orthogonal projection onto $K_+$. The following proposition, which is a special case of Lemma \ref{LemKK1}, is very easy to check.

\begin{proposition}\label{PropKK1HNN} $(K,\rho,V)$, where $V=2Q_+-1$, defines an element $x_C\in KK^1(C_v,B)$.\end{proposition}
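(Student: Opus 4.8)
The plan is to check by hand that $(K,\rho,V)$ is an odd Kasparov cycle for $KK^1(C_v,B)$, the statement being, as indicated, the one-loop specialisation of Lemma~\ref{LemKK1}. Since $Q_+$ is a self-adjoint idempotent, $V=2Q_+-1$ automatically satisfies $V=V^*$ and $V^2=1$, so $V^2-1=0\in\mathcal{K}_B(K)$ and the only thing to verify is that $[V,\rho(x)]=2[Q_+,\rho(x)]\in\mathcal{K}_B(K)$ for all $x\in C_v$. As $\pi\colon C_f\to C_v$ is surjective, $C_f$ is generated as a C*-algebra by $A$ and the stable letter $u$, and commutators with $Q_+$ are additive in $x$, it suffices to treat $x=\rho(\pi(a))$ with $a\in A$ and $x=\rho(\pi(u))$ (and one may restrict to the dense $*$-subalgebra spanned by $A$ and the reduced operators throughout).

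The cleanest way to see this is to identify $(K,\rho,V)$ with the datum of Lemma~\ref{LemKK1} for the graph of C*-algebras with one vertex carrying $A$ and one loop $e$ with $B_e=B$, $s_e=\pi_{-1}$, $r_e=\pi_1$ and conditional expectations induced by $E_{-1}$ and $E_1$. By Example~\ref{ExHNN} and Lemma~\ref{DevissageHNN}, applied with $\Gr_e$ the one-vertex graph (so $P_{\rm vert}^{\Gr_e}=A$), $C_v$ is canonically isomorphic to the vertex-reduced fundamental C*-algebra $P_{\rm vert}$ of this graph; under this isomorphism the canonical ucp map $E_A$ becomes $\EE_{A_{r(e)}}$, so $E=E_1\circ E_A$ becomes $r_e^{-1}\circ\EE_e$ and the GNS triple $(K,\rho,\eta)$ is carried to $(K_e,\rho_e,\eta_e)$ after transporting the coefficient algebra $B_e^r$ to $B$ via $r_e^{-1}$, while $K_+$ is carried to the submodule $\mathcal{R}_e\subset K_e$ of ``words ending with $e$'' and hence $Q_+$ to $Q_e$ and $V$ to $V_e=2Q_e-1$. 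Thus $x_C$ coincides with $x_e^{\Gr}=y_e^{\Gr}\ot_{B_e^r}[r_e^{-1}]$, and the fact that it defines a class in $KK^1(C_v,B)$ is exactly what Lemma~\ref{LemKK1} and the discussion following it give. (This also matches Proposition~\ref{Prop-KK1Free}, which is the free-product analogue treated the same way.)

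If instead one wants a self-contained verification, it runs parallel to the loop case in the proof of assertion~$(3)$ of Lemma~\ref{LemKK1}. First, $\rho(\pi(a))$ commutes with $Q_+$ for every $a\in A$: left multiplication by $a$ sends a reduced operator $a_0u^{\epsilon_1}\cdots u^{\epsilon_n}a_n$ to the reduced operator $(aa_0)u^{\epsilon_1}\cdots u^{\epsilon_n}a_n$, which has the same final segment $u^{\epsilon_n}a_n$ and hence lies in $K_+$ if and only if the original does, while $\rho(\pi(a))$ keeps the degree-zero part of $K$ inside $K_+^{\perp}$; so $\rho(\pi(a))$ preserves both $K_+$ and $K_+^{\perp}$. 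Second, $[Q_+,\rho(\pi(u))]$ is rank one: using the specialisation of Lemma~\ref{LemProduct} to the loop $e$ one finds $\bigl(Q_+\rho(\pi(u))-\rho(\pi(u))Q_+\bigr)\rho(\pi(b))\eta=0$ for every reduced operator $b\in C_f$, while for $b\in A$ one gets $\bigl(Q_+\rho(\pi(u))-\rho(\pi(u))Q_+\bigr)\rho(\pi(b))\eta=\rho(\pi(u))\eta\cdot E(b)=\rho(\pi(u))\eta\cdot\langle\eta,\rho(\pi(b))\eta\rangle$; since such vectors span $K$ densely, the commutator equals the rank-one operator $\xi\mapsto\rho(\pi(u))\eta\cdot\langle\eta,\xi\rangle\in\mathcal{K}_B(K)$.

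The only point that needs genuine attention, and it is pure bookkeeping rather than a real difficulty, is the reduction of a reduced word under left multiplication by $u$ as opposed to $u^{-1}$ — that is, the relevant case of Lemma~\ref{LemProduct} for a single loop — which is what guarantees that the whole commutator with $\rho(\pi(u))$ is carried by the empty-word component $\eta\cdot B$. No homotopy is needed; the argument is an identification of Kasparov cycles together with this elementary computation, which is why the statement can be declared ``very easy to check''.
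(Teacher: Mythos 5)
Your proposal is correct and follows exactly the route the paper intends: the paper itself offers no details beyond declaring the statement a special case of Lemma~\ref{LemKK1}, and your identification of $(K,\rho,V)$ with the data $(K_e,\rho_e,V_e)$ for the one-vertex, one-loop graph (so that $x_C$ becomes $x_e^{\Gr}$) is precisely that specialisation, while your hands-on verification ($[Q_+,\rho(\pi(a))]=0$ for $a\in A$ and $[Q_+,\rho(\pi(u))]$ of rank one) reproduces the loop case of the proof of assertion $(3)$ of Lemma~\ref{LemKK1}. No substantive difference from the paper's argument; only make sure you invoke the derivation property of commutators (not just additivity) to pass from the generators $\pi(A)$ and $\pi(u)$ to all of $C_v$.
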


\noindent Let $e\in E(\Gr)$ and suppose that $\Gr_e$ is connected. Up to a canonical isomorphism of $P$ we may and will assume that $\mathcal{T}\subset\Gr_e$. Recall that we have a canonical $*$-isomorphism $\nu_e\,:\,C_{\Gr_e}:={\rm HNN}_{{\rm vert}}(P_{{\rm vert}}^{\Gr_e},B_e,\pi_1,\pi_{-1})\rightarrow P_{{\rm vert}}$ defined in Lemma \ref{DevissageHNN}. As before, we get two canonical elements in $KK^1(P_{{\rm vert}},B_e)$: $x_e^\Gr$ and $x_{\Gr_e}:=[\nu_e^{-1}]\underset{C_{\Gr_e}}{\ot} y_{\Gr_e}$, where $y_{\Gr_e}\in KK^1(C_{\Gr_e},B_e)$ is the element associated to the vertex-reduced HNN-extension $C_{\Gr_e}$ constructed in Proposition \ref{PropKK1HNN}. As before, these two elements are actually equal.

\begin{lemma}\label{LemIsoHNN}We have $x_{\Gr_e}=x_e^\Gr\in KK^1(P_{{\rm vert}},B_e)$.\end{lemma}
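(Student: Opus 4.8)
The plan is to prove Lemma \ref{LemIsoHNN} by exactly the same method used for Lemma \ref{LemIsoFree}: exhibit an explicit unitary intertwiner between the two Kasparov triples representing $x_{\Gr_e}$ and $x_e^\Gr$, so that no homotopy is needed. The crucial preliminary observation is that the two ucp maps $P_{{\rm vert}}\rightarrow B_e$ given by $\varphi=r_e^{-1}\circ\EE_e$ on one side and $\psi=E\circ\nu_e^{-1}$ on the other (where $E=E_1\circ E_A\,:\,C_{\Gr_e}\rightarrow B_e$ is the canonical ucp map from Proposition \ref{PropKK1HNN} applied to $C_{\Gr_e}={\rm HNN}_{{\rm vert}}(P_{{\rm vert}}^{\Gr_e},B_e,\pi_1,\pi_{-1})$) coincide. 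This follows from Lemma \ref{DevissageHNN}, which asserts precisely that $\nu_e$ intertwines the canonical ucp maps $\EE_{\Gr_e}\circ\nu_e=\EE$, combined with the fact that $\pi_1=\lambda_{\Gr_e}\circ r_e$ so that $r_e^{-1}\circ\EE_e = r_e^{-1}\circ E_e^r\circ\EE_{A_{r(e)}}$ unwinds, via $\EE=E_1\circ E_A$ and the identification of $E_A$ with $\EE_{\Gr_e}$ postcomposed appropriately, to the same formula. I would state and verify this identity of ucp maps first.

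Given the equality $\varphi=\psi$, the GNS uniqueness gives a canonical unitary. Write $(K_e,\rho_e,\eta_e)$ for the GNS construction of $\EE_e$ and $(K,\rho,\eta)$ for the GNS construction of $E\,:\,C_{\Gr_e}\rightarrow B_e$; since $K=\overline{\rho\circ\nu_e^{-1}(P_{{\rm vert}})\eta\cdot B_e}$ and $K_e\underset{r_e^{-1}}{\ot}B_e=\overline{\rho_e(P_{{\rm vert}})\eta_e\ot1\cdot B_e}$ with matching inner products $\langle\eta,\rho\circ\nu_e^{-1}(x)\eta\rangle=\psi(x)=\varphi(x)=\langle\eta_e\ot1,\rho_e(x)\eta_e\ot1\rangle$, the map $U(\rho\circ\nu_e^{-1}(x)\eta\cdot b)=\rho_e(x)\eta_e\ot1\cdot b$ extends to a unitary $U\in\mathcal{L}_{B_e}(K,K_e\underset{r_e^{-1}}{\ot}B_e)$ intertwining $\rho\circ\nu_e^{-1}$ and $\rho_e(\cdot)\ot1$. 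This is verbatim the argument in the proof of Lemma \ref{LemIsoFree} and I would simply replicate it.

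The only genuine point requiring care is that $U$ carries the projection $Q$ defining $V=2Q-1$ in the triple for $x_{\Gr_e}$ to the projection $Q_e\ot1$ defining $V_e$ in the triple for $x_e^\Gr$, i.e. $UVU^*=V_e$. Here $Q$ is the orthogonal projection of $K$ onto $K_+$, the closed span of $\rho(\pi(x))\eta$ for $x=a_0u^{\epsilon_1}\dots u^{\epsilon_n}a_n\in {\rm HNN}(P_{{\rm vert}}^{\Gr_e},B_e,\pi_1,\pi_{-1})$ a reduced operator with $\epsilon_n=1$ and $a_n\in\pi_1(B_e)$; while $Q_e$ projects onto $\mathcal{R}_e$, spanned by $\rho_e(\lambda(y))\eta_e$ for $y=b_0u_{f_1}\dots u_{f_m}b_m\in P$ reduced from $r(e)$ to $r(e)$ with $f_m=e$ and $b_m\in B_e^r$. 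The verification that these two subspaces correspond under $U$ reduces to a dictionary: under the isomorphism $\nu_e$ and the identification of $P_{{\rm vert}}$ with ${\rm HNN}_{{\rm vert}}(P_{{\rm vert}}^{\Gr_e},B_e,\pi_1,\pi_{-1})$, a reduced word in $P$ ending in the edge $e$ with coefficient in $B_e^r$ corresponds to a reduced HNN-word ending in $u^{+1}=u_e$ with last coefficient in $\pi_1(B_e)=\lambda_{\Gr_e}(r_e(B_e))$ — since $\pi_1=\lambda_{\Gr_e}\circ r_e$ and $\nu_e(u)=u_e$ — and reduced words involving the other vertices of $\Gr_e$ get absorbed into the $P_{{\rm vert}}^{\Gr_e}$-coefficients of the HNN-normal form. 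I expect this bookkeeping to be the main (though still routine) obstacle; once the correspondence of the ``words ending with $e$'' is made explicit it is immediate that $U\mathcal{R}_e' = \mathcal{R}_e\ot1$ where $\mathcal{R}_e'$ corresponds to $K_+$, hence $UQU^*=Q_e\ot1$ and $UVU^*=V_e$, and therefore $x_{\Gr_e}=x_e^\Gr$.
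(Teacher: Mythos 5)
Your proposal follows essentially the same route as the paper: both reduce the statement to the equality of the two canonical ucp maps (via the intertwining property of $\nu_e$ from Lemma \ref{DevissageHNN}), invoke uniqueness of the GNS construction to produce the unitary $U$, and then check $UVU^*=V_e$ by matching the ``words ending with $u$'' in the HNN normal form with the reduced words in $P$ ending with the edge $e$ and coefficient in $B_e^r$ --- exactly the identification of Kasparov triples the paper carries out (and partly leaves to the reader) in analogy with Lemma \ref{LemIsoFree}. The argument is correct and no further comment is needed.
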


\begin{proof}
Recall that $(K,\rho,\eta)$ denotes the GNS construction of the canonical ucp map $E\,:\, C_{\Gr_e}\rightarrow B_e$. The proof is similar to the proof of Lemma \ref{LemIsoFree} and is just a simple identification. Since $\nu_e$ intertwines the canonical ucp maps, the two ucp maps $\varphi,\psi\,:\,P_{{\rm vert}}\rightarrow B_e$ defined by $\varphi=\EE_e$ and $\psi=E\circ\nu_e^{-1}$ are equal. As before, one can deduce easily from this equality an isomorphism of Kasparov's triples. Since the arguments are the same, we leave the details to the reader.
\end{proof}
 
 \begin{remark}\label{RmkFullKK1-3}
 The analogue of Lemmas \ref{LemIsoFree}, \ref{LemIsoHNN} are obviously still valid for the elements $z_e^\Gr\in KK^1(P,B_e)$ defined in Remark \ref{RmkFullKK1-1}.
  \end{remark}
 \section{The exact sequence}
 \noindent For any separable C*-algebra $C$, let $F^*(-)$ be $KK^*(C,-)$.  It is a $\Z_2$-graded covariant functor.
 If $f$ is a morphism of C*-algebras, we will denote by $f^*$ the induced morphism.
 
 \vspace{.2cm}
 
\noindent In the sequel $P_\Gr$ or simply $P$ denotes either the full or the vertex reduced fundamental C*-algebra. We define  the boundary maps 
 $\gamma_e^\Gr$ from $F^*(P_\Gr)=KK^*(D, P_\Gr)$ to $KK^{*+1}(D,B_e)=F^{*+1}(B_e)$ by 
  $\gamma_e^\Gr(y)=y\otimes_P z^\Gr_e$ when $P$ is the full fundamental C*-algebra or $\gamma_e^\Gr(y)=y\otimes_P x^\Gr_e$ when $P$ is the vertex reduced one. In the sequel we simply write $x_e=x_e^\Gr$ and $z_e=z_e^\Gr$.

\vspace{0.2cm}

\noindent If $\Gr$ is a graph, then $E^+$ is the set of positive edges, $V$ the set of vertices and for any $v\in V$, the map from $A_v$ to $P_\Gr$ is $\pi_v$ or
sometimes $\pi_v^\Gr$ if it is necessary to indicate which graph we consider.
If one removes an edge $e_0$ (and its opposite) to $\Gr$, the new graph is called $\Gr_0$, $P_0$ is the algebra associated to it and $\pi_v^0$ is the embedding of 
$A_v$ in $P_0$. We also have for $\Gr_1\subset \Gr$ a morphism $\pi_{\Gr_1}$ from $P_{\Gr_1}$ to $P_\Gr$.

 \begin{theorem}\label{exactseqgraph} In  the presence of conditional expectations (not necessarily GNS -faithful),  we have, for $P$ the full or vertex reduced fundamental C*-algebra, a long exact sequence
 $$\longrightarrow\bigoplus_{e\in E^+} F^*(B_e)\overset{\sum_e s_e^*-r_e^*}{\longrightarrow}\bigoplus_{v\in V} F^*(A_v) \overset{\sum_v \pi_v^*}{\longrightarrow} F^*(P)\overset{\oplus_e \gamma_e^\Gr}{\longrightarrow }\bigoplus_{e\in E^+} F^{*+1}(B_e)\longrightarrow$$
 
 \end{theorem}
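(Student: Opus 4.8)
\emph{Proof plan.} The plan is to argue by induction on the number $|E^{+}(\Gr)|$ of positive edges, using Serre's devissage process (Lemmas~\ref{DevissageFree} and~\ref{DevissageHNN}) to pass to graphs with strictly fewer edges, and the long exact sequence of \cite{FG15} for the full, resp.\ vertex-reduced, amalgamated free product --- together with Ueda's reduction \cite{Ue08} of HNN extensions to amalgamated free products --- as the base of the induction. Before the induction I would record that the displayed sequence is a complex. The composite $\pi_v^{*}\circ(s_e^{*}-r_e^{*})$ vanishes because $\pi_{s(e)}\circ s_e$ and $\pi_{r(e)}\circ r_e$ become conjugate inside $P$ by $u_e$, and conjugation by a unitary is trivial in $\KK$; the composite $\gamma_e^{\Gr}\circ\pi_v^{*}$ vanishes by Proposition~\ref{PropKK1}(2) (and Remark~\ref{RmkFullKK1-2}) applied to the one-vertex subgraph $\{v\}$, whose fundamental C*-algebra is $A_v$, which forces $[\pi_v^{\Gr}]\ot x_e^{\Gr}=0$ (resp.\ $[\pi_v^{\Gr}]\ot z_e^{\Gr}=0$); and the composite $(s_e^{*}-r_e^{*})\circ\gamma_e^{\Gr}$ vanishes by Proposition~\ref{PropKK1}(3)--(4), using $x_{\rev{e}}^{\Gr}=-x_e^{\Gr}$ and $s_{\rev{e}}=r_e$.

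For $|E^{+}(\Gr)|=0$ the graph is a single vertex $v$, $P=A_v$, and the sequence is $0\to F^{*}(A_v)\xrightarrow{\ \id\ }F^{*}(A_v)\to 0$. For $E^{+}(\Gr)=\{e_0\}$ with $s(e_0)\neq r(e_0)$, $P$ is the full, resp.\ vertex-reduced, amalgamated free product of $A_{s(e_0)}$ and $A_{r(e_0)}$ over $B_{e_0}$, and the assertion is the main theorem of \cite{FG15}; Lemma~\ref{LemIsoFree} (resp.\ Remark~\ref{RmkFullKK1-3}) together with Proposition~\ref{Prop-KK1Free} identifies the connecting map there with $\gamma_{e_0}^{\Gr}$. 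For $E^{+}(\Gr)=\{e_0\}$ with $s(e_0)=r(e_0)$, $P$ is the full, resp.\ vertex-reduced, HNN extension ${\rm HNN}(A_{s(e_0)},B_{e_0},s_{e_0},r_{e_0})$; Ueda's realization \cite{Ue08} of an HNN extension inside an amalgamated free product, combined with \cite{FG15}, yields the six-term exact sequence, with connecting map identified with $\gamma_{e_0}^{\Gr}$ via Lemma~\ref{LemIsoHNN} (resp.\ Remark~\ref{RmkFullKK1-3}) and Proposition~\ref{PropKK1HNN}.

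For the inductive step, assume $|E^{+}(\Gr)|=k\geq 2$ and the theorem for all graphs with fewer positive edges. Choose $e_0\in E^{+}(\Gr)$: if $\Gr$ is a tree, take any $e_0$, and then $\Gr_{e_0}$ disconnects into connected proper subgraphs $\Gr_1\ni s(e_0)$, $\Gr_2\ni r(e_0)$ with maximal subtrees $\T\cap\Gr_i$, so that by Lemma~\ref{DevissageFree} (resp.\ the preceding remark) $P$ is the full, resp.\ vertex-reduced, amalgamated free product of $P_{\Gr_1}$ and $P_{\Gr_2}$ over $B_{e_0}$; if $\Gr$ is not a tree, take $e_0\notin E(\T)$, and then $\Gr_0:=\Gr_{e_0}$ is connected, proper and contains $\T$, so that by Lemma~\ref{DevissageHNN} (resp.\ the preceding remark) $P$ is the full, resp.\ vertex-reduced, HNN extension of $P_{\Gr_0}$ over $B_{e_0}$. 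In both cases the subgraphs involved have $k-1$ positive edges, so the inductive hypothesis applies to them. I would then \emph{splice} two long exact sequences sharing a common term $M$ (with $M=F^{*}(P_{\Gr_1})\oplus F^{*}(P_{\Gr_2})$, resp.\ $M=F^{*}(P_{\Gr_0})$): the base-case sequence for the one-edge decomposition, relating $F^{*}(B_{e_0})$, $M$ and $F^{*}(P)$ with connecting map $\gamma_{e_0}^{\Gr}$; and the direct sum of the inductive sequences for $\Gr_1,\Gr_2$ (resp.\ the inductive sequence for $\Gr_0$), relating $\bigoplus_{e\in E^{+}\setminus\{e_0\}}F^{*}(B_e)$, $\bigoplus_{v}F^{*}(A_v)$ and $M$. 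The splicing is a Mayer--Vietoris--type diagram chase; its conceptual content is the octahedral axiom in the triangulated category $\KK$, applied to the composition $\bigoplus_{v}A_v\to M\to P$, whose mapping cone is $SP$ up to the cone over the subgraph(s). The chase needs four compatibilities, all from Section~3: the composite $\bigoplus_{v}F^{*}(A_v)\to M\to F^{*}(P)$ equals $\sum_{v}\pi_v^{*}$ (functoriality of the vertex maps); the $e_0$-component of $\sum_e(s_e^{*}-r_e^{*})$ is carried by $\bigoplus_{v}F^{*}(A_v)\to M$ onto the first arrow of the base-case sequence; for $e\neq e_0$ the inductive connecting map $\gamma_e^{\Gr_i}$ agrees with $\gamma_e^{\Gr}$ precomposed with $F^{*}(P_{\Gr_i})\hookrightarrow M\to F^{*}(P)$, which is Proposition~\ref{PropKK1}(1)--(2) (resp.\ Remark~\ref{RmkFullKK1-2}); and $\gamma_{e_0}^{\Gr}$ kills the image of $M$ in $F^{*}(P)$, which is exactness of the base-case sequence. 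Granting these, a routine chase yields exactness at $\bigoplus_{v}F^{*}(A_v)$, at $F^{*}(P)$ and at $\bigoplus_{e}F^{*}(B_e)$, hence --- by the periodic shape of the sequence --- everywhere, and the same chase shows the arrows of the new sequence are the prescribed ones.

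The analytic substance being already contained in \cite{FG15} and in Section~3, the main obstacle is the uniform bookkeeping of the splicing: one must verify, \emph{at once} for the full and the vertex-reduced algebras and in both devissage cases, that every arrow of the resulting sequence is precisely the prescribed one --- in particular that the connecting map on the $\bigoplus_{e\neq e_0}$ summand is genuinely $\bigoplus_{e}\gamma_e^{\Gr}$ and not merely some homomorphism with the right source and target --- and that the two input exact sequences really do glue along $M$. A subsidiary point requiring care is the choice of $e_0$ and of compatible maximal subtrees so that Lemmas~\ref{DevissageFree} and~\ref{DevissageHNN} apply and the subgraphs are strictly smaller.
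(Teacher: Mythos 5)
Your proposal is correct and follows essentially the same route as the paper: verification of the chain-complex property via Proposition~\ref{PropKK1}, base cases from \cite{FG15} (amalgamated free products) and Ueda's reduction for HNN extensions with the boundary maps identified through Lemmas~\ref{LemIsoFree}, \ref{LemIsoHNN} and Propositions~\ref{Prop-KK1Free}, \ref{PropKK1HNN}, and then induction on the number of edges via Serre's devissage. The ``splicing'' diagram chase you defer to, with exactly the compatibilities you list, is precisely what the paper carries out explicitly in its three exactness lemmas (Case I / Case II at each of the three positions).
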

 
 \begin{proof}
 First note that it is indeed a chain complex.  Because $s_e$ and $r_e$ are conjugated in the full or reduced fundamental C*-algebra, we only have to check that $\gamma_e \circ \pi_v^*=0$ (which is point 2 of prop \ref{PropKK1})
 and (for $P_{vert}$),  $\sum_{e\in E^+} x_e\otimes [r_e]- x_e\otimes [s_e]=0$. As $x_e=-x_{\bar e}$ (point 4 of \ref{PropKK1}) and $s_{\bar e}=r_e$, this is the same as point 3 of \ref{PropKK1}. Because of remark 3.5, this is also true for the full fundamental C*-algebra.
 
 \vspace{0.2cm}
 
 \noindent Also if the graph contains only one geometric edge (i.e. two opposite oriented edges), we are in the case of the amalgamated free product or the HNN extension and the complex is known to be exact because of the results of \cite{FG15}. For convenience we will briefly recall why and also we will identify the boundary map. Let's do the full amalgamated free product $A_f$ first. Recall that in theorem 4.1 of \cite{FG15}, we proved that  the suspension of $A_1\underset{B}{*} A_2$ is KK-equivalent to $D$ the cone of the inclusion of $B$  in $A_1$ and $A_2$.  Obviously  $D$ fits into a short exact sequence :
 $$0\rightarrow A_1\otimes S\oplus A_2\otimes S\overset{}{\longrightarrow} D\overset{ev_0}\longrightarrow B\rightarrow 0.$$
 Therefore there is a long exact sequence for our functor $F^*$ :
 $$F^*(A_1\otimes S\oplus A_2\otimes S)\rightarrow F^*(D) \rightarrow F^*(B) \rightarrow F^{*+1}(A_1\otimes S\oplus A_2\otimes S).$$
 But $F^*(A_k\otimes S)$ identifies with $F^{*+1}(A_k)$ and $F^*(D)$ with $F^{*+1}(A_f)$. Via these identifications, the map from $F^*(B)$ to $F^*(A_k)$ becomes $i_k^*$ or its opposite (this is seen using the mapping cone exact sequence)  
 and the map from $F^*(A_k)$ to $F^*(A_f)$ is $j_k^*$. The only thing left is the identification of the boundary map from $F^*(A_f)$ to $F^{*+1}(B)$. It is obviously the Kasparov product by $x\otimes [ev_0]$ where $x$ is the
 element in $KK^1(A_f,D)$ that implements the K-equivalence. The element $x\ot[ev_0]\in KK^1(A_f,B)$ has been described in lemma 4.9 of \cite{FG15} and it is equal to $[\pi]\ot x_A$, where $x_A\in KK^1(A_v,B)$ is exactly the element of \ref{Prop-KK1Free} and $\pi$ is the canonical surjection from the full amalgamated free product $A_f$ to the vertex-reduced amalgamated free product $A_v$. Therefore the boundary map
 is exactly given by the corresponding $\gamma_e^{\Gr}$ for the graph of the free product. Moreover, since $x$ actually factorizes as $[\pi]\otimes_{A_v} z$ where $z\in KK^1(A_v,D)$, the same identifications and the same exact sequence hold for the vertex reduced free product $A_v$ and theorem \ref{exactseqgraph} is true for free products.

\vspace{0.2cm}
  
\noindent Now let's tackle the HNN extension case. Let's call $C_m$ the full HNN extension of $(A,B,\theta)$ and $E$ and $E_\theta$ the conditional expectations from $A$ to $B$ and $\theta(B)$. We also denote by $C_v$ the vertex-reduced HNN-extension and $\pi\,:\, C_m\rightarrow C_v$ the canonical surjection. An explicit  isomorphism is known to exist between $C_m$ and the full amalgamated free product $e_{11}M_2(A)\underset{B\oplus B}{*} M_2(B)e_{11}$ where $B\oplus B$ imbeds diagonally  in $M_2(A)$ via the canonical inclusion and $ \theta$, $e_{11}$ is the matrix unit $\begin{pmatrix}1&0\\0&0\\  \end{pmatrix}$ and the conditional expectations are 
 $E_1\begin{pmatrix} a_1&a_2\\ a_3&a_4\\  \end{pmatrix}=E(a_1)\oplus E_\theta(a_4)$ from $M_2(A)$ to $B\oplus B$ and $E_2\begin{pmatrix} b_1&b_2\\ b_3&b_4\\  \end{pmatrix}=b_1\oplus b_4$
 from $M_2(B)$ to $B\oplus B$. The exact sequence for the HNN extension is then deduced from this isomorphism of C*-algebras (cf. \cite{Ue08} for example).

\vspace{0.2cm}

\noindent If we call $j_A$ and $j_B$ the inclusions of $M_2(A)$ respectively $M_2(B)$ in the free product  then the unitary $u$ in $C_m$ that implements $\theta$ is mapped to 
 $j_A\begin{pmatrix}0&1\\0&0\\  \end{pmatrix}j_B\begin{pmatrix}0&0\\1&0\\  \end{pmatrix}$.

\noindent It is then clear that a reduced word in $C_m$ that ends with $u$ times $b$ with $b$ in $B$ is mapped into a reduced word in the free product that ends with 
 $j_B\begin{pmatrix}0&0\\1&0\\  \end{pmatrix}\begin{pmatrix}b&0\\0&0\\  \end{pmatrix}=j_B\begin{pmatrix} 0&b'\\ b&0\\ \end{pmatrix}e_{11}$ i.e. that ends in $j_B(M_2(B))\ominus (B\oplus  B)$.
 Therefore, in this situation and after a Kasparov product by $[\pi]$ on the left, the element described in \ref{PropKK1HNN} is the same as the element described in \ref{Prop-KK1Free} and we have identified the correct boundary map.
 
 \vspace{0.2cm}
 
\noindent Let's have a look now at the vertex reduced situation. Observe that the conditional expectation $E_2$ from $M_2(B)$ to $B\oplus B$ is GNS faithful. It follows from the constructions of \cite[Section 2]{FG15} that $M_2(A)\overset{2}{\underset{B\oplus B}{*}} M_2(B)$ is isomorphic to $M_2(A)\overset{e}{\underset{B\oplus B}{*}} M_2(B)$ and as a consequence $M_2(A)\overset{1}{\underset{B\oplus B}{*}} M_2(B)$ is isomorphic to
$M_2(A)\overset{v}{\underset{B\oplus B}{*}} M_2(B)$. Using the universal properties it is now obvious that the vertex reduced HNN extension of $(A,B,\theta)$ is $e_{11}M_2(A)\overset{1}{\underset{B\oplus B}{*}} M_2(B)e_{11}$. Therefore the identification described earlier for the full free product and HNN extension is again true for the vertex reduced free product and corresponding vertex reduced HNN extension. Hence theorem \ref{exactseqgraph} is again valid for HNN extensions.

\vspace{0.2cm}
 
\noindent We now prove exactness at each place by induction on the cardinal of edges and \textit{devissage}. Note that \ref{LemIsoFree} and \ref{LemIsoHNN}  allow us to decompose our fundamental algebra in
 HNN or free product while using the same boundary maps $\gamma_e$.
 
 \begin{lemma}
 We have the exactness of $\bigoplus_{e\in E^+} F^*(B_e)\overset{\sum_e s_e^*-r_e^*}{\longrightarrow}\bigoplus_{v\in V} F^*(A_v) \overset{\sum_v \pi_v^*}{\longrightarrow} F^* ( P )$.
 \end{lemma}
 
 \begin{proof}
 Choose a positive edge $e_0$.  Then without this edge (and its opposite), the graph $\Gr_0$ is either connected (Case I) or has two connected components $\Gr_1$ and $\Gr_2$ (Case II).
 
 \vspace{0.2cm}
 
 \noindent\textbf{Case I.} $P$ is the HNN extension of $P_{\Gr_0}$ and $B_{e_0}$. The set of vertices of $\Gr$ is the same as the set of vertices of $\Gr_0$ and we may and will assume that $v_0=s(e_0)=r(e_0)$. Let $x=\oplus x_v$ be in $\oplus_{v\in V} F^*(A_v)$ such that $\sum_v \pi_v^*(x_v)=0$. If $y=\sum_v {\pi_v^0}^*(x_v)$, then clearly $\pi_{\Gr_0}(y)=0$. Then, the long exact sequence for 
 $P$ seen as  an HNN extension implies then that there exists $y_0\in F^*(B_{e_0})$ such that $(\pi_{v_0}\circ s_{e_0})^*(y_0)-(\pi_{v_0}\circ r_{e_0})^*(y_0)=y=\sum_v {\pi_v^0}^*(x_v)$. Hence,
 $$\sum_v {\pi_v^0}^*(\oplus_{v\neq v_0} x_v\oplus (x_{v_0}-s_{e_0}^*(y_0)+r_{e_0}^*(y_0))=0.$$
 Using the exactness for $P_0$ as $\Gr_0$ has one less edge, we get that there exists for any $e\neq e_0$ a $y_e$ such that $\sum_{e\neq e_0} s_e^*(y_e)-r_e^*(y_e)= \oplus_{v\neq v_0} x_v\oplus (x_{v_0}-s_{e_0}^*(y_0)+r_{e_0}^*(y_0))$. Thus,
 $$\sum_{e\neq e_0} s_e^*(y_e)-r_e^*(y_e)+s_{e_0}^*(y_0)-r_{e_0}^*(y_0)=x.$$
 
 \noindent\textbf{Case II.} $P$ is the amalgamated free product of $P_1=P_{\Gr_1}$ and $P_2=P_{\Gr_2}$ over $B_{e_0}$. For $i=1,2$, denote by $V_i$ the vertices of $\Gr_i$. We know that $V$ is the disjoint union of $V_1$ and $V_2$. The map $\pi_v^i$ will be the embedding of $A_v$ in $P_i$. We also write $v_1=s(e_0)$ and $v_2=r(e_0)$. Let $x=\oplus x_v$ be in $\oplus_{v\in V} F^*(A_v)$ such that $\sum_v \pi_v^*(x_v)=0$. Let  $x_i=\oplus_{v\in V_i} {\pi_v^i}^*(x_v)$. Clearly $\pi_{\Gr_1}^*(x_1)+\pi_{\Gr_2}^*(x_2)=0$. Then, the long exact sequence for $P$ seen as an amalgamated free product gives a $y_0\in F^*(B_{e_0})$ such that 
 $(\pi_{v_1}^1\circ s_{e_0})^*(y_0)-(\pi_{v_2}^2\circ r_{e_0})^*(y_0)=x_1\oplus x_2$. Define $\bar x_1= \oplus_{v\in V_1} x_v -s_{e_0}^*(y_0)$ and $\bar x_2= \oplus_{v\in V_2} x_v +r_{e_0}^*(y_0)$.
We have, for $i=1,2$, $\sum_{v\in V_i}{ \pi_v^i}^*(\bar x_i)=0$. Therefore by induction as $\Gr_i$ has strictly less edges than $\Gr$, there exists for any $e\neq e_0$ a $y_e\in F^*(B_e)$ such that
 $\bar x_1\oplus \bar x_2= \sum_{e\neq e_0} s_e^*(y_e)-r_e^*(y_e)$. Hence, $x= \sum_{e\neq e_0} s_e^*(y_e)-r_e^*(y_e)+ s_{v_0}^*(y_0)- r_{v_0}^*(y_0)$.\end{proof}

 \begin{lemma}
The following chain complex is exact in the middle
$$\bigoplus_{v\in V} F^*(A_v) \overset{\sum_v \pi_v^*}{\longrightarrow} F^*(P)\overset{\oplus_e \gamma_e^\Gr}{\longrightarrow }\bigoplus_{e\in E^+} F^{*+1}(B_e)$$
 \end{lemma}
 
 \begin{proof} As in the previous Lemma, we separate in the proof in Case I and Case II.
 
 \vspace{0.2cm}
 \noindent\textbf{Case I.} Let $x$ be in $F^*(P)$ such that for any $e$, $\gamma_e^\Gr(x)=0$. In particular for the edge $e_0$.
 Using the long exact sequence for $P$ seen as an HHN extension, and since $\gamma_{e_0}^\Gr(x)=0$ we get that there exists $x_0$ in $F^*(P_0)$ such that $\pi_{ \Gr_0}^*(x_0)=x$. 
 For any edges $e\neq e_0$, one has $\gamma_e^{\Gr_0}(x_0)=\gamma_e^{\Gr}( \pi_{ \Gr_0}^*(x_0))=0$. Hence by induction there exists for any $v\in V(\Gr_0)=V(\Gr)$ a $y_v\in F^*(A_v)$
 such that $\sum_v {\pi_v^0}^*(y_v)=x_0$. Hence $x=\sum_v (\pi_{ \Gr_0}\circ\pi_v^0)^*(y_v)=\sum_v \pi_v^*(y_v)$.
 
 \vspace{0.2cm}
 \noindent\textbf{Case II.} Using that $P$ is the free product of $P_1$ and $P_2$, we get an $x_i\in F^*(P_i)$ for $i=1,2$ such that $x=\pi_{\Gr_1}^*(x_1)+\pi_{\Gr_2}^*(x_2)$. 
 Now for $i=1,2$, and for any edge $e$ of $\Gr_i$, we have
 $$\gamma_e^{\Gr_i}(x_i)=\gamma_e^{\Gr}( \pi_{ \Gr_i}^*(x_i))=\gamma_e^\Gr(x)-\gamma_e^\Gr(\pi_{\Gr_j}^*(x_j))\text{ for }j\neq i.$$
 But $e$ is not an edge of $\Gr_j$, so $\gamma_e^\Gr\circ\pi_{\Gr_j}^*=0$. Hence $\gamma_e^{\Gr_i}(x_i)=0$.
 By induction we get for any vertex of $V_1\cup V_2=V(\Gr)$ a $y_v\in F^*(A_v)$ such that $x_i=\sum_{v\in V_i} {\pi_v^i}^*(y_v)$ for $i=1,2$. 
 Therefore $x=\sum_v \pi_v^*(y_v)$.
 
 \end{proof}
 
 \begin{lemma}
 The following chain complex is exact in the middle
 $$F^{*-1}(P)\overset{\oplus_e \gamma_e^\Gr}{\longrightarrow }\bigoplus_{e\in E^+} F^{*}(B_e)\overset{\sum_e s_e^*-r_e^*}{\longrightarrow}\bigoplus_{v\in V} F^*(A_v) $$
 
 \end{lemma}
 
 \begin{proof}
\textbf{Case I.} Let $x=\oplus_{e\in E^+} x_e$ such that $\sum_e s_e^*(x_e)-r_e^*(x_e)=0$. Then for the distinguished vertex $v_0$, one has
 ${\pi_{v_0}^0}^*(s_{e_0}^*(x_{e_0}))- {\pi_{v_0}^0}^*(r_{e_0}^*(x_{e_0}))= -\sum_{e\neq e_0} {\pi_{v_0}^0}^*(s_{e}^*(x_{e_0}))- {\pi_{v_0}^0}^*(r_{e}^*(x_{e_0}))$. But as $e$ is an edge of $\Gr_0$, $s_e$ and $r_e$ are conjugated by a unitary of $P_0$. Therefore there difference are $0$ in any KK-groups. 
 Thus ${\pi_{v_0}^0}^*(s_{e_0}^*(x_{e_0}))- {\pi_{v_0}^0}^*(r_{e_0}^*(x_{e_0}))=0$. Using the long exact sequence for $P$ as an HHN extension, we get 
 a $y_0$ in $F^{*-1}(P)$ such that $\gamma_{e_0}^\Gr(y_0)=x_{e_0}$.  Set now $\bar x_e= x_e-\gamma_e^\Gr(y_0)$ for any $e\neq e_0$ and compute
 \begin{eqnarray*}
 \sum_{e\neq e_0} s_e^*(\bar x_e)-r_e^*(\bar x_e)&=&\sum_{e\neq e_0} s_e^*( x_e)-r_e^*( x_e)-\sum_e s_e^*(\gamma_e^\Gr(y_0))- r_e^*(\gamma_e^\Gr(y_0))\\&&+ s_{e_0}^*(\gamma_{e_0}^\Gr(y_0))- r_{e_0}^*(\gamma_{e_0}^\Gr(y_0)) \\
 &=&\sum_{e} s_e^*( x_e)-r_e^*( x_e),\\
 \end{eqnarray*}
 by the third property of $\gamma_e$. Hence, $\sum_{e\neq e_0} s_e^*(\bar x_e)-r_e^*(\bar x_e)=0$. By induction there exists $\bar y_1$ in $F^{*-1}(P_0)$ such that for all $e\neq e_0$, $\gamma_e^{\Gr_0}(y_1)=\bar x_e$. Set at last $ y_1=\pi_{\Gr_0}^*(\bar y_1)$ which is an element of $F^{*-1}(P)$.
 Now $\gamma_{e_0}^\Gr(y_0+y_1)= x_0+\gamma_{e_0}^\Gr\circ \pi_{\Gr_0}^* (\bar y_1)$. But $e_0$ is not an edge of $\Gr_0$ so $\gamma_{e_0}^\Gr\circ \pi_{\Gr_0}^*=0$.
 Hence $\gamma_{e_0}^\Gr(y_0+y_1)= x_0$. On the other end, for $e\neq e_0$, $\gamma_{e}^\Gr(y_0+y_1)= \gamma_{e}^\Gr(y_0)+\bar x_e$ as $\gamma_e^{\Gr_0}=\gamma_e^\Gr\circ \pi_{\Gr_0}^*$.
 It follows that $\gamma_{e}^\Gr(y_0+y_1)=x_e$.
 
\vspace{0.2cm}
\noindent\textbf{Case II.} Call $E_i$ the  edges of $\Gr_i$ for $i=1,2$. Note that for any positive edge $e$, if $s(e)\in V_1$ then either $e\in E_1$ or $e=e_0$ and
 if $r(e)\in V_2$ then $e\in E_2$. Let $x=\oplus_{e\in E^+} x_e$ such that $\sum_e s_e^*(x_e)-r_e^*(x_e)=0$. The equality can be rewritten as
 $\sum_{e\in E_1^+} s_e^*(x_e)-r_e^*(x_e)+ s_{e_0}^*(x_{e_0})=0$ in $\oplus_{v\in V_1} F^*(A_v)$ and $\sum_{e\in E_2^+} s_e^*(x_e)-r_e^*(x_e)- r_{e_0}^*(x_{e_0})=0$ in
 $\oplus_{v\in V_2} F^*(A_v)$. Let's compute now $\pi_{v_1}^1(x_{e_0})$.  It is $- \sum_{e\in E_1^+} (\pi_{s(e)}^1\circ s_e)^*(x_e)-(\pi_r(e)^1\circ r_e)^*(x_e)$ by the preceding remark. But as $s_e$ and
 $r_e$ are conjugated in $P_1$ because $e$ is an edge of $\Gr_1$, this is $0$.
 In the same way $\pi_{v_2}^2(x_{e_0})=0$.
 Therefore using the long exact sequence for $P$ as a free product of $P_1$ and $P_2$, there is a $y_0$ in $F^{*-1}(P)$ such that $\gamma_{e_0}^\Gr(y_0)=x_{e_0}$. For all $e\neq e_0$ set $\bar x_e=x_e-\gamma_e^\Gr(y_0)$.
 Then,
 $$\sum_{e\in E_1^+} s_e^*(\bar x_e)-r_e^*(\bar x_e)=\sum_{e\in E_1^+} s_e^*(x_e)-r_e^*(x_e) -\left (\sum_{e\in E_1^+} s_e^*\circ \gamma_e^\Gr(y_0)-r_e^*\circ \gamma_e^\Gr(y_0)\right).$$
 But the third property of the $\gamma_e^\Gr$ implies that $0=\sum_{e\in E_1^+} s_e^*\circ \gamma_e^\Gr +s_{e_0}^*\circ \gamma_{e_0}^\Gr-\sum_{e\in E_1^+} r_e^*\circ \gamma_e^\Gr$
 using the remark made at the begining of this proof. Hence,
 $$\sum_{e\in E_1^+} s_e^*(\bar x_e)-r_e^*(\bar x_e)=\sum_{e\in E_1^+} s_e^*(x_e)-r_e^*(x_e)+ s_{e_0}^*(x_{e_0})=0.$$
 Similarly, $\sum_{e\in E_2^+} s_e^*(\bar x_e)-r_e^*(\bar x_e)=0$. Therefore by induction, there exists for $i=1,2$, an element $y_i$ in $F^{*-1}(P_i)$ such that 
 for all $e$ in $E_i^+$, $\gamma_e^{\Gr_i}(y_i)=\bar x_e$.  Set now $y=y_0+\pi_{\Gr_1}(y_1)+\pi_{\Gr_2}(y_2)$ in $F^{*-1}(P)$.
 Then $\gamma_{e_0}^\Gr(y)=x_{e_0}+\gamma_{e_0}^{\Gr}\circ\pi_{\Gr_1}^*(y_1)+\gamma_{e_0}^{\Gr}\circ\pi_{\Gr_2}^*(y_2)= x_{e_0}$ as
 $\gamma_{e_0}^{\Gr}\circ\pi_{\Gr_i}=0$ since $e_0$ is not an edge of $\Gr_1$ nor $\Gr_2$.
 On the other end, for $e\in E_1$, $\gamma_{e}^\Gr(y)=\gamma_{e}^\Gr(y_0)+\gamma_{e}^{\Gr_1}(y_1)+0$ as $e$ is not an edge of $\Gr_2$.
 Hence $\gamma_{e}^\Gr(y)=\gamma_{e}^\Gr(y_0)+\bar x_e=x_e$.  The same is of course true for an edge in $E_2$. So we are done.
 \end{proof}
 \noindent The proof of Theorem \ref{exactseqgraph} is now complete.\end{proof}
 
\noindent Let's treat now the case $F^*(-)=KK(-,C)$. Again if $f$ is a morphism of C*-algebras we will adopt the same notation $f^*$ for the induced morphism. 
Now the map $\gamma_e^\Gr$ from $F(B_e)$ to $F(P)$ is defined as $\gamma_e^\Gr(a)=x_e^\Gr\otimes_{B_e} a$ if $P$ is the vertex reduced fundamental C*-algebra or
$\gamma_e^\Gr(a)=z_e^\Gr\otimes_{B_e} a$ if $P$ is the full fundamental C*-algebra.

\begin{theorem}\label{exactseqgraph2} In the presence of  conditional expectations, we have, for $P$ the full or reduced fundamental C*-algebra, a long exact sequence
 $$\longleftarrow\bigoplus_{e\in E^+} F^*(B_e)\overset{\sum_e s_e^*-r_e^*}{\longleftarrow}\bigoplus_{v\in V} F^*(A_v) \overset{\sum_v \pi_v^*}{\longleftarrow} F^*(P)\overset{\oplus_e \gamma_e^\Gr}{\longleftarrow }\bigoplus_{e\in E^+} F^{*+1}(B_e)\longleftarrow$$\end{theorem}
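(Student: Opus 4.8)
The plan is to transcribe, with all arrows reversed, the proof of Theorem~\ref{exactseqgraph}, now applying the contravariant $\Z_2$-graded functor $F^*(-)=KK^*(-,C)$. First I would check that the sequence is a chain complex: the two identities needed, $\gamma_e^\Gr\circ\pi_v^*=0$ and $\sum_{e\in E^+}(s_e^*-r_e^*)\circ\gamma_e^\Gr=0$, come exactly as in the proof of Theorem~\ref{exactseqgraph} from assertions $(2)$, $(3)$, $(4)$ of Proposition~\ref{PropKK1} together with $x_{\overline e}^\Gr=-x_e^\Gr$ and $s_{\overline e}=r_e$ (and Remark~\ref{RmkFullKK1-2} in the full case). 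These are statements about the $KK^1$-elements $x_e^\Gr$, $z_e^\Gr$ themselves, hence insensitive to which variable of $KK$ is used.

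Next I would treat the two base cases, a graph with a single geometric edge. For the amalgamated free product $A_f=A_1\underset{B}{*}A_2$ I would use, as in the proof of Theorem~\ref{exactseqgraph}, that by \cite[Theorem 4.1]{FG15} the suspension $SA_f$ is $KK$-equivalent to the cone $D$ of the inclusions of $B$ into $A_1$ and $A_2$, and that $D$ fits into the mapping-cone extension
\[0\longrightarrow A_1\otimes S\oplus A_2\otimes S\longrightarrow D\overset{ev_0}{\longrightarrow}B\longrightarrow 0,\]
which is semisplit, so $F^*(-)=KK^*(-,C)$ turns it into a long exact Puppe sequence in the first variable. After the identifications $F^*(A_k\otimes S)\cong F^{*+1}(A_k)$ and $F^*(D)\cong F^{*+1}(A_f)$, reading off the maps from the mapping cone exact sequence exactly as before identifies this with the asserted sequence, the boundary map being Kasparov multiplication by $x\otimes[ev_0]=[\pi]\otimes x_A$ of \cite[Lemma 4.9]{FG15}, i.e. by $z_e^\Gr$ (resp. $x_e^\Gr$) on the appropriate side; since $x$ factors as $[\pi]\otimes z$ with $z\in KK^1(A_v,D)$, the argument applies verbatim to the vertex-reduced free product $A_v$. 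The HNN-extension case then follows, as in the proof of Theorem~\ref{exactseqgraph}, from Ueda's isomorphism \cite{Ue08} realizing the full (resp. vertex-reduced) HNN-extension as a corner of an amalgamated free product of $M_2$-algebras, which reduces it to the case just treated with the same boundary maps.

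Then I would run the induction by \emph{devissage} on the number of geometric edges, exactly as in the proof of Theorem~\ref{exactseqgraph}. Fixing a positive edge $e_0$, the graph $\Gr_0$ obtained by removing $e_0$ and $\overline{e_0}$ is either connected (Case~I, $P$ an HNN-extension of $P_{\Gr_0}$ and $B_{e_0}$) or splits into two components $\Gr_1,\Gr_2$ (Case~II, $P$ an amalgamated free product of $P_{\Gr_1}$ and $P_{\Gr_2}$ over $B_{e_0}$). Lemmas~\ref{LemIsoFree} and~\ref{LemIsoHNN} (and Remark~\ref{RmkFullKK1-3} in the full case) ensure these decompositions are compatible with the boundary maps $\gamma_e^\Gr$, so that the induction hypothesis on $P_{\Gr_0}$, $P_{\Gr_1}$, $P_{\Gr_2}$ can be fed in. Exactness at each of the three spots is then obtained by the three diagram chases dual to the ones in the proof of Theorem~\ref{exactseqgraph}; these are purely formal and use only the induction hypothesis, the base cases, the long exact sequences for the HNN resp. free product decomposition of $P$, and the fact that $s_e$ and $r_e$ are conjugate in $P_{\Gr_0}$ (resp. $P_{\Gr_i}$) for every edge $e$ of that subgraph.

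I expect the only non-routine point to be the half-exactness of $KK^*(-,C)$ invoked in the base cases: unlike $KK^*(C,-)$, the first-variable functor is only half-exact, so one must check that the extensions in play admit completely positive splittings. This is immediate here since they are mapping-cone extensions, hence semisplit, and the resulting Puppe exact sequence in the first variable is standard (see \cite{Bl86}). Once this is granted, the entire argument is a mechanical transcription of the proof of Theorem~\ref{exactseqgraph}.
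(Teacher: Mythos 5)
Your proposal is correct and coincides with the paper's own proof: the paper likewise observes that the chain-complex property and the one-edge base cases follow from ``the same identifications'' as in Theorem \ref{exactseqgraph} (Proposition \ref{PropKK1}, Lemmas \ref{LemIsoFree}, \ref{LemIsoHNN} and Remark \ref{RmkFullKK1-3}), and then establishes exactness at the three spots by the dual diagram chases in Cases I and II of the devissage induction, exactly as you outline. Your explicit remark that the mapping-cone extension is semisplit, so that $KK^*(-,C)$ yields the Puppe long exact sequence in the first variable, is accurate and covers the only point the paper leaves implicit.
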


\begin{proof}
As before this is a chain complex and the same identifications proves it for free products and HNN extension. We will now show exactness with the three following lemmas.

 \begin{lemma}
 We have the exactness of  $\bigoplus_{e\in E^+} F^*(B_e)\overset{\sum_e s_e^*-r_e^*}{\longleftarrow}\bigoplus_{v\in V} F^*(A_v) \overset{\sum_v \pi_v^*}{\longleftarrow} F^*( P )$.
 \end{lemma}
 
\begin{proof}
Let $x=\oplus x_v\in \oplus_v F(A_v)$ such that $\sum_e s_e^*(\oplus x_v)-r_e^*(\oplus x_v)=0$.

\vspace{0.2cm}

\noindent\textbf{Case I.} We have $\sum_{e\neq e_0}  s_e^*(\oplus x_v)-r_e^*(\oplus x_v)=0$ hence, there is a $y_0$ in $F(P_0)$ such that for all $v$, ${\pi_v^0}^*(y_0)=x_v$. But $s_{e_0}^*\circ{\pi_{v_0}^0}^*(y_0)=s_{e_0}^*(x_{v_0})=r_{e_0}^*(x_{v_0})=r_{e_0}^*\circ{\pi_{v_0}^0}^*(y_0)$. Using the exact sequence for $P$ as an HNN extension of $P_0$ and the two copies of $B_{e_0}$, we get that there is $y\in F(P)$ such that ${\pi_{\Gr_0}}^*(y)=y_0$.
 Now for all $v$, $\pi_v^*(y)={\pi_v^0}^*(y_0)=x_v$.
 
 \vspace{0.2cm}

\noindent\textbf{Case II.} We have, for $k=1,2$, $\sum_{e\in E_k^+} s_e^*(\oplus x_v)-r_e^*(\oplus x_v)=0$ hence there is $y_k\in F(P_k)$ such that ${\pi_v^k}^*(y_k)=x_v$ for any $v\in V_k$. As $s_{e_0}^*\circ {\pi_{v_1}^1}^*(y_1)= s_{e_0}^*(x_{v_1})=r_{e_0}^*(x_{v_2})=r_{e_0}^*\circ {\pi_{v_2}^2}^*(y_2)$. Using the exact sequence for $P$ as a free product, we have a $y\in F(P)$ such that ${\pi_{\Gr_k}}^*(y)=y_k$ for $k=1,2$. Then for $k=1,2$ and all $v\in V_k$,  $\pi_v^*(y)={\pi_v^k}^*(y_k)=x_v$.
\end{proof}
 
\begin{lemma}
The following chain complex is exact in the middle
$$\bigoplus_{v\in V} F^*(A_v) \overset{\sum_v \pi_v^*}{\longleftarrow} F^*(P)\overset{\oplus_e \gamma_e^\Gr}{\longleftarrow }\bigoplus_{e\in E^+} F^{*+1}(B_e)$$
\end{lemma}
 
\begin{proof} Let $y$ be in $F(P)$ such that $\pi_v^*(y)=0$ for all $v$. 

\vspace{0.2cm}

\noindent\textbf{Case I.} Let $y_0=\pi_{\Gr_0}^*(y).$ Then for all $v$, ${\pi_v^0}^*(y_0)= \pi_v^*(y)=0$. Therefor there exists $x=\sum_{e\neq e_0} x_e$ such that 
$\sum_{e\neq e_0} {\gamma_e^{\Gr_0}}(x_e)=y_0$.  Put $z=y-\sum_{e\neq e_0} {\gamma_e^{\Gr}}^*(x_e)$. Then,
$$\pi_{\Gr_0}^*(z)=y_0-\sum_{e\neq e_0} {\gamma_e^{\Gr_0}}(x_e)=0.$$
Hence there is a $x_{e_0}\in F(B_{e_0})$ such that $\gamma_{e_0}(x_{e_0})=z$ and $y= \sum_{e\neq e_0} {\gamma_e^{\Gr}}(x_e)+\gamma_{e_0}(x_{e_0})$.

\vspace{0.2cm}

\noindent\textbf{Case II.}  Let $y_k=\pi_{\Gr_k}^*(y)$ for $k=1,2$.  For all $v\in V_k$, ${\pi_v^k}^*(y_k)= \pi_v^*(y)=0$, hence there exists $x_k=\oplus_{e\in E_k^+} x_e$ such that
$\sum_{e\in E_k^+} \gamma_e^{\Gr_k}(x_e)=y_k$. Let $z=y-\sum_{e\neq e_0} {\gamma_e^{\Gr}}^*(x_e)$. Then for $k=1,2$, $\pi_{\Gr_k}^*(z)= y_k-\sum_{e\in E_k^+} \gamma_e^{\Gr_k}(x_e)=0$ as $\pi_{\Gr_2}^*\circ\gamma_e^{\Gr_1}=0$ because of \ref{PropKK1}.  Hence $z=\gamma_{e_0}(x_{e_0})$ for some $x_{e_0}$ in $F(B_{e_0})$ and
we are done.

\end{proof}

\begin{lemma}
The following chain complex is exact in the middle
$$F^{*-1}(P)\overset{\oplus_e \gamma_e^\Gr}{\longleftarrow }\bigoplus_{e\in E^+} F^{*}(B_e)\overset{\sum_e s_e^*-r_e^*}{\longleftarrow}\bigoplus_{v\in V} F^*(A_v) $$
\end{lemma}
 
 \begin{proof}Let $x=\oplus x_e$ in $F(\oplus_e B_e)$ such that $\sum_{e\in E^+}\gamma_e^\Gr(x_e)=0$.
 
\vspace{0.2cm}

\noindent\textbf{Case I.} We have $0={\pi_{\Gr_0}}^*(\sum_{e\in E^+}\gamma_e^\Gr(x_e))=\sum_{e\neq e_0} \gamma_e^{\Gr_0}(x_e)$ as ${\pi_{\Gr_0}}^*\circ\gamma_{e_0}=0$.
 Hence by induction, there is a $z=\oplus z_v$ in $\oplus_v F(A_v)$ such that for all $e\neq e_0$, $x_e=s_e^*(z_{s(e)})-r_e^*(z_{r(e)})$. Put $x_0=x_{e_0}-s_{e_0}^*(z_{v_0})-r_{e_0}^*(z_{v_0})$. By 5.5 we have $\sum_{e\in E^+} \gamma_e\circ s_e^*-\gamma_e\circ r_e^*=0$ hence,
 $$\gamma_{e_0}\circ (-s_{e_0}^*(z_{v_0})+r_{e_0}^*(z_{v_0}))=\sum_{e\neq e_0} \gamma_e( s_e^*(\oplus z_v))-\gamma_e(r_e^*(\oplus z_v))=\sum_{e\neq e_0} \gamma_e(x_e).$$
 It follows that $\gamma_{e_0}(x_0)=\gamma_{e_0}(x_{e_0})+\sum_{e\neq e_0} \gamma_e(x_e)=0$.  Using the long exact sequence for $P$ as an HNN extension of $P_0$ and $B_{e_0}$, we get a $z_0\in F(P_0)$ such that $x_0= s_{e_0}^*({\pi_{v_0}^0}^*(z_0))-r_{e_0}^*({\pi_{v_0}^0}^*(z_0))$.  
 So $x_{e_0}=s_{e_0}^*(z_{v_0}+{\pi_{v_0}^0}^*(z_0))- r_{e_0}^*(z_{v_0}+{\pi_{v_0}^0}^*(z_0))$ and we are done.
 
\vspace{0.2cm}

\noindent\textbf{Case I.} $0={\pi_{\Gr_k}}^*(\sum_{e\in E^+}\gamma_e^\Gr(x_e))=\sum_{e\neq E_k^+} \gamma_e^{\Gr_k}(x_e)$ for $k=1,2$. Hence there is a $z=\oplus z_v$
 such that for all $e\in E_k^+$,   $x_e=s_e^*(z_{s(e)})-r_e^*(z_{r(e)})$. Write $x_0=x_{e_0}-s_{e_0}^*(z_{v_1})-r_{e_0}^*(z_{v_2})$. As before we have
 that $\gamma_{e_0}(x_0)=0$ and by exactness of the exact sequence for the free product of $P_1$ and $P_2$ there is $z_1\in F(P_1)$ and $z_2\in F(P_2)$ such that 
 $x_0=s_{e_0}^*({\pi_{v_1}^1}^*(z_1))-r_{e_0}^*({\pi_{v_2}^2}^*(z_2))$. Finally $x_{e_0}=s_{e_0}^*(z_{v_1}+{\pi_{v_1}^1}^*(z_1))- r_{e_0}^*(z_{v_2}+{\pi_{v_2}^2}^*(z_2))$.
\end{proof}

\noindent The proof of Theorem \ref{exactseqgraph2} is now complete.\end{proof}

\section{Applications}

\noindent In this section we collect some applications of our results to $K$-equivalence and $K$-amenability of quantum groups.
\vspace{0.2cm}

\noindent Let $(\Gr,A_p,B_e)$ and $(\Gr,A_p',B_e')$ be two graphs of unital C*-algebras with maps $s_e$ and $s_e'$ and conditional expectations $E_e^s$ and $(E_e^s)' $. Suppose that we have unital $*$-homomorphisms $\nu_p\,:\,A_p\rightarrow A_p'$ and $\nu_e\,:\,B_e\rightarrow B_e'$ such that $\nu_e=\nu_{\overline{e}}$ and $\nu_{s(e)}\circ s_e=s_e'\circ\nu_e$ for all $e\in E(\Gr)$. Let  $P$ and $P$ be the associated full fundamental C*-algebras with canonical unitaries $u_e$ and $u_e'$ respectively. By the relations $\nu_e=\nu_{\overline{e}}$ and $\nu_{s(e)}\circ s_e=s_e'\circ\nu_e$ and the universal property of the full fundamental C*-algebra, there exists a unique unital $*$-homomorphism $\nu\,:\, P\rightarrow P'$ such that
$$\nu\vert_{A_p}=\nu_p\quad\text{and}\quad\nu(u_e)=u_e'\quad\text{for all }p\in V(\Gr),\,e\in E(\Gr).$$

\begin{theorem}\label{ThmKequivalence}
If $(E_e^s)'\circ\nu_{s(e)}=\nu_{s(e)}\circ E_e^s$ and $\nu_p,\nu_e$ are $K$-equivalences for all $p\in V(\Gr),e\in E(\Gr)$ then $\nu$ is a $K$-equivalence.
\end{theorem}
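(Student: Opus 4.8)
The plan is to compare, for an arbitrary separable C*-algebra $C$, the long exact sequence of Theorem~\ref{exactseqgraph} (with $F^*(-)=KK^*(C,-)$) attached to the graph of C*-algebras $(\Gr,A_p,B_e)$ with the analogous sequence attached to $(\Gr,A_p',B_e')$, and then to invoke the five lemma. The morphisms $\nu_e$, $\nu_v$ and $\nu$ induce vertical maps between the two six-term exact sequences for the \emph{full} fundamental C*-algebras. Since $\nu_{s(e)}\circ s_e=s_e'\circ\nu_e$ (hence also $\nu_{r(e)}\circ r_e=r_e'\circ\nu_e$) and $\nu\circ\pi_v=\pi_v'\circ\nu_v$, the two squares built from $\sum s_e^*-r_e^*$ and from $\sum\pi_v^*$ commute. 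The only nontrivial square is the one containing the boundary maps, and by associativity of the Kasparov product its commutativity amounts to the family of identities
\begin{equation}
z_e\otimes_{B_e}[\nu_e]=[\nu]\otimes_{P'}z_e'\quad\text{in }KK^1(P,B_e')\qquad(e\in E(\Gr)),\tag{$\star$}
\end{equation}
where $z_e=z_e^\Gr$ as in Section~4 and $z_e'\in KK^1(P',B_e')$ is the corresponding element for the primed graph of C*-algebras. Granting $(\star)$, the vertical maps induced by $\nu_e$ and $\nu_v$ are isomorphisms (because $\nu_e$, $\nu_v$ are $K$-equivalences), so the five lemma applied around the six-term sequence shows that $\nu^*\colon KK^*(C,P)\to KK^*(C,P')$ is an isomorphism for every separable $C$. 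A standard argument then finishes the proof: taking $C=P'$ produces $g\in KK(P',P)$ with $g\otimes_P[\nu]=1_{P'}$, and taking $C=P$ together with the injectivity of $-\otimes_P[\nu]$ on $KK(P,P)$ forces $[\nu]\otimes_{P'}g=1_P$; thus $[\nu]$ is invertible in $KK$ and $\nu$ is a $K$-equivalence.

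Everything therefore reduces to $(\star)$, which is the step I expect to be the main obstacle. It is a naturality statement of the same flavour as Proposition~\ref{PropKK1}(1) and Lemmas~\ref{LemIsoFree},~\ref{LemIsoHNN}, and like those I would prove it by producing an isomorphism of Kasparov triples rather than a homotopy. The first step is to check that $\nu$ intertwines the canonical ucp maps: for each $p\in V(\Gr)$,
\begin{equation*}
\EE_{A_p}'\circ\lambda'\circ\nu=\nu_p\circ\EE_{A_p}\circ\lambda\colon P\to A_p',
\end{equation*}
where $\lambda\colon P\to P_{\rm vert}$, $\lambda'\colon P'\to P'_{\rm vert}$ are the canonical surjections, $\EE_{A_p}$ is the ucp map of Proposition~\ref{Prop-vertex}, and $\EE_{A_p}'$ is its analogue for the primed data. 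Both sides are ucp, both restrict to $\nu_p$ on $A_p\subset P$, and both vanish on every reduced operator from $p$ to $p$: for the right side by Proposition~\ref{Prop-vertex}(2), and for the left side because $\nu$ sends a reduced operator $a_0u_{e_1}\cdots u_{e_n}a_n$ of $P$ to $\nu_p(a_0)u_{e_1}'\nu_{r(e_1)}(a_1)\cdots u_{e_n}'\nu_{r(e_n)}(a_n)$, which is again reduced — this is exactly where the hypothesis $(E_e^s)'\circ\nu_{s(e)}=\nu_{s(e)}\circ E_e^s$ is used, to see that the condition $E_{e_{i+1}}^s(a_i)=0$ survives under $\nu$. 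As $A_p$ and the reduced operators span a dense $*$-subalgebra of $P$ (Remarks~\ref{rem:pathmax},~\ref{rem:redmax}) and the maps are contractive, the two sides coincide. Composing with $E_e^r$ and using $(E_e^r)'\circ\nu_{r(e)}=\nu_{r(e)}\circ E_e^r$ and $r_e'\circ\nu_e=\nu_{r(e)}\circ r_e$, one gets $\EE_e'\circ\lambda'\circ\nu=\widetilde\nu_e\circ\EE_e\circ\lambda$, where $\widetilde\nu_e\colon B_e^r\to(B_e')^r$ is the restriction of $\nu_{r(e)}$ and $\EE_e=E_e^r\circ\EE_{A_{r(e)}}$, $\EE_e'$ are the ucp maps from Section~3 for the two sets of data. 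Equivalently, the two ucp maps $P\to B_e'$ underlying the source modules on the two sides of $(\star)$ have canonically isomorphic GNS constructions.

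To derive $(\star)$ from this, let $(K_e,\rho_e,\eta_e)$ and $(K_e',\rho_e',\eta_e')$ be the GNS constructions of $\EE_e$ and $\EE_e'$, so that $z_e$ is represented by $(K_e,\rho_e\circ\lambda,V_e)\otimes_{B_e^r}[r_e^{-1}]$ with $V_e=2Q_e-1$. The identification above yields an isometry $\iota\colon K_e\otimes_{\widetilde\nu_e}(B_e')^r\hookrightarrow K_e'$, $\rho_e(\lambda(x))\eta_e\otimes 1\mapsto\rho_e'(\lambda'(\nu(x)))\eta_e'$, which intertwines $(\rho_e\circ\lambda)\otimes 1$ with $\rho_e'\circ\lambda'\circ\nu$ and has image the submodule $M:=\overline{\rho_e'(\lambda'(\nu(P)))\eta_e'\cdot(B_e')^r}$. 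Then $z_e\otimes_{B_e}[\nu_e]$ is represented by $\bigl(K_e\otimes_{\widetilde\nu_e}(B_e')^r,\,(\rho_e\circ\lambda)\otimes 1,\,V_e\otimes 1\bigr)\otimes_{(B_e')^r}[(r_e')^{-1}]$, and $[\nu]\otimes_{P'}z_e'$ by $\bigl(K_e',\,\rho_e'\circ\lambda'\circ\nu,\,V_e'\bigr)\otimes_{(B_e')^r}[(r_e')^{-1}]$. The one genuinely computational point — the analogue of the argument inside the proof of Proposition~\ref{PropKK1}(1), using the explicit description of $Q_e'$ from Lemma~\ref{LemKK1}(1) and the fact that $\nu$ carries a reduced word of $P$ ending with $u_e$ and a $B_e^r$-coefficient to a reduced word of $P'$ ending with $u_e'$ and a $(B_e')^r$-coefficient (and is compatible with the splitting $K_e=\mathcal{R}_e\oplus\mathcal{L}_e$) — is to verify that, under $\iota$, $Q_e'|_M$ becomes $Q_e\otimes 1$ and that the projection $R_e:=\iota\iota^*$ onto $M$ commutes with $Q_e'$, hence with $V_e'$. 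Once this is in hand, the triple $\bigl(K_e',\rho_e'\circ\lambda'\circ\nu,V_e'\bigr)$ splits along $M\oplus M^\perp$: the summand on $M$ is, via $\iota$, exactly the triple representing $z_e\otimes_{B_e}[\nu_e]$, while the summand on $M^\perp$ is degenerate. Indeed, by Lemma~\ref{LemKK1}(1) each commutator $[Q_e',\rho_e'(\lambda'(\nu(a)))]$ has range in $M$ (its values are assembled from prefixes of $\nu(a)$, which lie in $M$), whereas it maps $M^\perp$ into $M^\perp$ since $R_e$ commutes with $Q_e'$ and with $\rho_e'(\lambda'(\nu(a)))$; hence it vanishes identically on $M^\perp$. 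Therefore the $M^\perp$-summand is zero in $KK^1(P,B_e')$, and $(\star)$ follows, completing the proof.
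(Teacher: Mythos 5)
Your proof is correct and follows essentially the same route as the paper: the five lemma applied to the six-term exact sequence(s), reduced to the naturality identity $z_e\otimes_{B_e}[\nu_e]=[\nu]\otimes_{P'}z_e'$, which is verified through the GNS identification furnished by the hypothesis $(E_e^s)'\circ\nu_{s(e)}=\nu_{s(e)}\circ E_e^s$. If anything you are more careful than the paper at one point: the paper asserts a canonical isomorphism $K_e\otimes_{\nu_e}B_e'\simeq K_e'$, whereas (since $\nu$ need not have dense range) this map is in general only an isometry onto a complemented submodule, and your splitting of $K_e'$ into that submodule plus a degenerate complement is the honest way to complete the comparison of Kasparov triples.
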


\begin{proof}
Consider the following diagrams with exact rows
{\Small
$$\begin{array}{ccccccccc}
\underset{{e\in E^+}}{\bigoplus}KK(D,B_e)&\rightarrow&\underset{{p\in V}}{\bigoplus}KK(D,A_p)&\rightarrow&KK(D,P)&\rightarrow &\underset{{e\in E^+}}{\bigoplus}KK^1(D,B_e)&\rightarrow &\underset{{p\in V}}{\bigoplus}KK^1(D,A_p)\\
\downarrow\bigoplus\cdot\underset{B_e}{\ot}[\nu_e]& &\downarrow \bigoplus\cdot\underset{A_p}{\ot}[\nu_p]& &\downarrow \cdot\underset{P}{\ot}[\nu]& &\bigoplus\cdot\underset{B_e}{\ot}[\nu_e]& &\downarrow \bigoplus\cdot\underset{A_p}{\ot}[\nu_p]\\\underset{{e\in E^+}}{\bigoplus}KK(D,B'_e)&\rightarrow&\underset{{p\in V}}{\bigoplus}KK(D,A'_p)&\rightarrow&KK(D,P')&\rightarrow &\underset{{e\in E^+}}{\bigoplus}KK^1(D,B'_e)&\rightarrow &\underset{{p\in V}}{\bigoplus}KK^1(D,A'_p)\\
\end{array}$$
}

{\Small
$$\begin{array}{ccccccccc}
\underset{{e\in E^+}}{\bigoplus}KK(B'_e,D)&\rightarrow&\underset{{p\in V}}{\bigoplus}KK(A'_p,D)&\rightarrow&KK(P',D)&\rightarrow &\underset{{e\in E^+}}{\bigoplus}KK^1(B'_e,D)&\rightarrow &\underset{{p\in V}}{\bigoplus}KK^1(A'_p,D)\\
\downarrow\bigoplus [\nu_e] \underset{B_e}{\ot}\cdot& &\downarrow \bigoplus [\nu_p]\underset{A_p}{\ot}\cdot& &\downarrow [\nu]\underset{P}{\ot}\cdot& &\bigoplus [\nu_e]\underset{B_e}{\ot}\cdot& &\downarrow \bigoplus [\nu_p]\underset{A_p}{\ot}\cdot\\
\underset{{e\in E^+}}{\bigoplus}KK(B_e,D)&\leftarrow&\underset{{p\in V}}{\bigoplus}KK(A_p,D)&\leftarrow&KK(P,D)&\leftarrow &\underset{{e\in E^+}}{\bigoplus}KK^1(B_e,D)&\leftarrow &\underset{{p\in V}}{\bigoplus}KK^1(A_p,D)\\
\end{array}$$
}

\noindent By the Five Lemma and the hypothesis, it suffices to check that, for each $D$, every square of the two diagrams is commutative. We check that for the first diagram. The verification for the second diagram is similar. For a unital inclusion $X\subset Y$ of unital C*-algebras, we write $\iota_{X\subset Y}$ the inclusion map. The first square on the left and the last square on the right of the first diagram are obviously commutative since, by hypothesis, $\nu_{s(e)}\circ s_e=s_e'\circ\nu_e$ and $\nu_{r(e)}\circ r_e=r_e'\circ\nu_e$ for all $e\in E^+$. The second square on the left is commutative since, by definition of $\nu$, we have $\nu\circ\iota_{A_p\subset P}=\iota_{A_p'\subset P'}\circ\nu_p$ for all $p\in V$. Hence, it suffices to check that the third square, starting from the left, is commutative. Note that the commutativity of this square is equivalent to the equality $z_e\underset{B_e}{\ot}[\nu_e]=[\nu]\underset{P'}{\ot}z_e'\in KK^1(P,B'_e)$, where $z_e\in KK^1(P,B_e)$ and $z_e'\in KK^1(P',B_e')$ are the $KK^1$ elements constructed in Remark 3.7 associated with the graphs of C*-algebras $(\Gr,A_p,B_e)$ and $(\Gr,A'_p,B'_e)$ respectively. This equality follows easily from the assumption $(E_e^s)'\circ\nu_{s(e)}=\nu_{s(e)}\circ E_e^s$ since it gives a canonical isomorphism of Hilbert modules $K_e\otimes_{\nu_e} B_e'\simeq K_e'$ which is easily seen to implement an isomorphisms between the Kasparov triples representing $z_e\underset{B_e}{\ot}[\nu_e]$ and $[\nu]\underset{P'}{\ot}z_e'$.
\end{proof}

\noindent We write $P_{\rm vert}$ the vertex reduced fundamental C*-algebra of $(\Gr,A_p,B_e)$ and $\lambda\,:\, P\rightarrow P_{\rm vert}$ the canonical surjective unital $*$-homomorphism. The following Theorem is an immediate consequence of the two $6$ terms exact sequences we proved in this paper: one for the full fundamental C*-algebra $P$ and one for the vertex-reduced fundamental C*-algebra $P_{\rm{vert}}$ and the Five Lemma.

\begin{theorem}\label{CorKequivalenceGraphs}
Suppose that $\Gr$ is a finite graph then the class of the canonical surjection $[\lambda]\in KK(P,P_{{\rm vert}})$ is invertible.
\end{theorem}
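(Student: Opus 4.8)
The statement follows by comparing the two six‑term exact sequences of Theorem \ref{exactseqgraph} (for a fixed auxiliary separable C*-algebra $D$), one for $P$ and one for $P_{{\rm vert}}$, and applying the Five Lemma. Concretely, first I would fix $D$ and set up the ladder diagram whose rows are the long exact sequences of Theorem \ref{exactseqgraph} applied to $F^*(-)=KK^*(D,-)$: the top row involves $\bigoplus_{e\in E^+}F^*(B_e)\to\bigoplus_{p\in V}F^*(A_p)\to F^*(P)\to\bigoplus_{e\in E^+}F^{*+1}(B_e)$ and the bottom row is the same with $P$ replaced by $P_{{\rm vert}}$. The vertical maps at the $B_e$‑ and $A_p$‑spots are the identity (since $\lambda$ restricts to the identity on each $A_p$ and the maps $s_e,r_e$ are the same in both graphs of C*-algebras), and the vertical map at the $P$‑spot is $\cdot\otimes_P[\lambda]$.

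The main verification is that this ladder commutes. The square involving $\bigoplus_p F^*(A_p)\to F^*(P)$ and $\bigoplus_p F^*(A_p)\to F^*(P_{{\rm vert}})$ commutes because $\lambda\circ\pi_p^\Gr=\pi_p^{\Gr}$ composed with $\lambda$, i.e. by definition the map $A_p\to P_{{\rm vert}}$ factors through $\lambda$; the square involving the boundary maps commutes because, for $P$ the full fundamental C*-algebra, the boundary map is Kasparov product by $z_e^\Gr=[\lambda]\otimes_{P_{{\rm vert}}}x_e^\Gr$ (Remark \ref{RmkFullKK1-1}), so $\gamma_e^{\Gr,\,{\rm full}}(y\otimes_P[\lambda])=(y\otimes_P[\lambda])\otimes_{P_{{\rm vert}}}x_e^\Gr=y\otimes_P z_e^\Gr=\gamma_e^{\Gr,\,{\rm full}}(y)$ — that is, the boundary map for the full algebra is literally the composition of $\cdot\otimes_P[\lambda]$ with the boundary map for the vertex‑reduced algebra. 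This is exactly the content of the identity $z_e^\Gr=[\lambda]\otimes_{P_{{\rm vert}}}x_e^\Gr$, and it is the reason the two exact sequences fit into one ladder.

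Once commutativity is established, the Five Lemma (in its standard form for long exact sequences of abelian groups) shows that $\cdot\otimes_P[\lambda]\colon KK^*(D,P)\to KK^*(D,P_{{\rm vert}})$ is an isomorphism for every separable $D$ and every degree $*$. Taking $D=P_{{\rm vert}}$ and applying this to the class of the identity produces an element $\alpha\in KK(P_{{\rm vert}},P)$ with $\alpha\otimes_P[\lambda]=[\mathrm{id}_{P_{{\rm vert}}}]$, i.e. $[\lambda]$ has a right inverse. To obtain a two‑sided inverse I would run the dual argument: comparing the two exact sequences of Theorem \ref{exactseqgraph2} (for $F^*(-)=KK^*(-,C)$) gives that $[\lambda]\otimes_{P_{{\rm vert}}}\cdot\colon KK^*(P_{{\rm vert}},C)\to KK^*(P,C)$ is an isomorphism for all separable $C$; taking $C=P$ and evaluating at $[\mathrm{id}_P]$ yields $\beta\in KK(P_{{\rm vert}},P)$ with $[\lambda]\otimes_{P_{{\rm vert}}}\beta=[\mathrm{id}_P]$. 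A standard diagram‑chase argument with associativity of the Kasparov product then forces $\alpha=\beta$ and hence $[\lambda]$ is invertible in $KK$. The only mild subtlety — and the step I would be most careful about — is checking the commutativity of the boundary‑map square, but this is immediate from Remark \ref{RmkFullKK1-1} together with associativity of the Kasparov product, so there is no real obstacle; the argument is a routine Five Lemma application once the functorial identities for the boundary maps are in place.
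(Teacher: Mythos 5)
Your argument is correct and is essentially the paper's own proof: the paper deduces Theorem \ref{CorKequivalenceGraphs} immediately from the two six-term exact sequences (for $P$ and for $P_{\rm vert}$) and the Five Lemma, with the commutativity of the boundary square coming from the identity $z_e^\Gr=[\lambda]\underset{P_{\rm vert}}{\ot}x_e^\Gr$ of Remark \ref{RmkFullKK1-1}, exactly as you spell out. Your additional care in producing a two-sided inverse (using Theorem \ref{exactseqgraph2} and associativity, or equivalently a Yoneda-type argument) is a fine way to make explicit what the paper leaves implicit.
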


\begin{remark}
The previous result is actually true without assuming the graph $\Gr$ to be  finite. Indeed the inverse of $[\lambda]$ and the homotopy showing that it is an inverse can be constructed directly, without using induction. Since such a proof requires more work and does not bring any new ideas, we chose to not include it.
\end{remark}

\begin{corollary}
The following holds.
\begin{enumerate}
\item If $G$ be the fundamental compact quantum group of a finite graph of compact quantum groups $(G_p,G_e,\Gr)$ then $\widehat{G}$ is $K$-amenable if and only if $\widehat{G}_p$ is $K$-amenable for all $p$.
\item If $G$ is the compact quantum group obtained from the (finite) graph product of the family of compact quantum groups $G_p$, $p\in V(\Gr)$ (see \cite{CF14}) then $\widehat{G}$ is $K$-amenable if and only if $\widehat{G}_p$ is $K$-amenable for all $p\in V(\Gr)$.
\end{enumerate}
\end{corollary}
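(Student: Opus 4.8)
The plan is to deduce both statements from Theorems \ref{ThmKequivalence} and \ref{CorKequivalenceGraphs} together with the structural results of \cite{FF13} and \cite{CF14}. Recall that $\widehat{G}$ is $K$-amenable exactly when the canonical surjection $\lambda_G\,:\,C_u(G)\to C_r(G)$ is a $KK$-equivalence. For $(1)$, the starting point is the identification from \cite{FF13} of $C_u(G)$ with the full fundamental C*-algebra $P$ of the graph of C*-algebras $(\Gr,C_u(G_p),C_u(G_e))$, whose structure maps are the canonical inclusions $C_u(G_e)\hookrightarrow C_u(G_{s(e)})$ and whose conditional expectations are the canonical ones dual to the inclusions of discrete quantum groups (in general \emph{not} GNS-faithful), and the identification of $C_r(G)$ with the vertex-reduced fundamental C*-algebra $\widetilde P_{\mathrm{vert}}$ of the graph $(\Gr,C_r(G_p),C_r(G_e))$, whose Haar conditional expectations are GNS-faithful (so that this coincides with the reduced fundamental C*-algebra of \cite{FF13}). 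Writing $\nu_p=\lambda_{G_p}$ and $\nu_e=\lambda_{G_e}$ for the canonical surjections onto the reduced algebras, the universal property of the full fundamental C*-algebra produces a morphism $\nu\,:\,C_u(G)=P\to\widetilde P$, where $\widetilde P$ is the full fundamental C*-algebra of $(\Gr,C_r(G_p),C_r(G_e))$, and $\lambda_G$ factors as $\lambda_G=\widetilde\lambda\circ\nu$ with $\widetilde\lambda\,:\,\widetilde P\to\widetilde P_{\mathrm{vert}}=C_r(G)$ the canonical surjection. Since $\Gr$ is finite, Theorem \ref{CorKequivalenceGraphs} makes $[\widetilde\lambda]$ invertible, so $\widehat{G}$ is $K$-amenable if and only if $[\nu]$ is invertible.

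For the ``if'' direction, assume each $\widehat{G}_p$ is $K$-amenable. Each $\widehat{G}_e$ is a discrete quantum subgroup of $\widehat{G}_{s(e)}$, so, as $K$-amenability passes to discrete quantum subgroups, every $\widehat{G}_e$ is $K$-amenable as well; hence all $\nu_p$ and all $\nu_e$ are $K$-equivalences. The canonical conditional expectations are compatible with these maps, i.e. $(E_e^s)'\circ\nu_{s(e)}=\nu_{s(e)}\circ E_e^s$ (both sides are the canonical Haar-type expectations), so Theorem \ref{ThmKequivalence} applies and $\nu$, hence $\lambda_G=\widetilde\lambda\circ\nu$, is a $K$-equivalence. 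For the ``only if'' direction, assume $\widehat{G}$ is $K$-amenable; for each $p$ the faithful inclusion $C_u(G_p)\hookrightarrow C_u(G)$ of Remark \ref{rem:faithrep} realises $\widehat{G}_p$ as a discrete quantum subgroup of $\widehat{G}$, compatibly with the Haar states, so $\widehat{G}_p$ is $K$-amenable by the same passage-to-subgroups principle. This settles $(1)$.

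For $(2)$ I would induct on $|V(\Gr)|$, the one-vertex case being trivial. Given at least two vertices, fix $v_0\in V(\Gr)$, set $L=\mathrm{link}(v_0)$ and $S=\{v_0\}\cup L$, and let $G_L$, $G_S$, $G_0$ denote the graph products over the induced subgraphs on $L$, $S$, and $V(\Gr)\setminus\{v_0\}$. By \cite{CF14}, $G$ is the amalgamated free product of $G_S$ and $G_0$ over $G_L$, compatibly at the full and reduced levels, and $G_S=G_{v_0}\times G_L$. Applying $(1)$ to the two-vertex, one-edge graph of quantum groups with vertices $G_S,G_0$ and edge $G_L$, $\widehat{G}$ is $K$-amenable if and only if $\widehat{G_S}$ and $\widehat{G_0}$ are; since $\widehat{H_1\times H_2}$ is $K$-amenable if and only if both $\widehat{H_1}$ and $\widehat{H_2}$ are, $\widehat{G_S}$ is $K$-amenable if and only if $\widehat{G}_{v_0}$ and $\widehat{G_L}$ are. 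By the induction hypothesis (the subgraphs on $L$ and on $V(\Gr)\setminus\{v_0\}$ being strictly smaller), $\widehat{G_L}$ is $K$-amenable if and only if $\widehat{G}_p$ is $K$-amenable for all $p\in L$, and $\widehat{G_0}$ is $K$-amenable if and only if $\widehat{G}_p$ is $K$-amenable for all $p\neq v_0$; combining, $\widehat{G}$ is $K$-amenable if and only if $\widehat{G}_p$ is $K$-amenable for every $p\in V(\Gr)$.

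The substantive input is Theorems \ref{ThmKequivalence} and \ref{CorKequivalenceGraphs}; the rest is bookkeeping. The points I expect to require real care are the precise identifications from \cite{FF13} of $C_u(G)$, $C_r(G)$ and of the Haar-type conditional expectations with the objects of the graph-of-C*-algebras formalism, the verification of the intertwining relation $(E_e^s)'\circ\nu_{s(e)}=\nu_{s(e)}\circ E_e^s$, the compatibility of the decompositions of \cite{CF14} with both the full and the reduced structures, and the use of the standard facts that $K$-amenability of discrete quantum groups is inherited by discrete quantum subgroups and is detected factorwise on direct products.
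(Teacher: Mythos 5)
Your proposal is correct and follows essentially the same route as the paper: identify $C_u(G)$ with the full fundamental C*-algebra of $(\Gr,C_u(G_p),C_u(G_e))$ and $C_r(G)$ with the vertex-reduced fundamental C*-algebra of $(\Gr,C_r(G_p),C_r(G_e))$, factor $\lambda_G$ through the full fundamental C*-algebra of the reduced graph, and invoke Theorem \ref{CorKequivalenceGraphs} for one factor and Theorem \ref{ThmKequivalence} (with $K$-amenability of the edge duals obtained by passage to quantum subgroups) for the other, then deduce $(2)$ by induction via the star/link amalgamated free product decomposition of \cite{CF14}. Your write-up is in fact slightly more careful than the paper's, which declares the converse of $(1)$ obvious and compresses $(2)$ into one sentence, leaving implicit the factorwise behaviour of $K$-amenability on the direct product $G_{v_0}\times G_{L}$ that you rightly single out as an auxiliary fact to be checked.
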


\begin{proof}
Using induction, $(2)$ is a consequence of $(1)$ since, as observed in \cite{CF14}, a graph product maybe written as an amalgamated free product using a kind of \textit{devissage} strategy.

\vspace{0.2cm}

\noindent Let us prove $(1)$. Consider the two graphs of C*-algebras $(\Gr,C_{max}(G_p),C_{max}(G_e))$ and\\ $(\Gr,C_{red}(G_p),C_{red}(G_e))$ with full fundamental C*-algebra $P_{max}$ and $P$ respectively. Note that both graphs have natural families of conditional expectations by only the conditional expectations on $(\Gr,C_{red}(G_p),C_{red}(G_e))$  are GNS faithful (expect in the presence of co-amenability) L.et $P_{red}$ be the vertex reduced fundamental C*-algebra of $(\Gr,C_{red}(G_p),C_{red}(G_e))$. We recall that $C_{max}(G)=P_{max}$ and $C_{red}(G)=P_{red}$ (see \cite{FF13}). Let $\lambda\,:\,P\rightarrow P_{red}$ the canonical surjection, which is a $K$-equivalence by Theorem \ref{CorKequivalenceGraphs} and let $\nu\,:\,P_{max}\rightarrow P$ be the canonical surjection obtained from the canonical surjections $\nu_p:=\lambda_{G_p}\,:\,C_{max}(G_p)\rightarrow C_{red}(G_p)$ and $\nu_e:=\lambda_{G_e}\,:\,C_{max}(G_e)\rightarrow C_{red}(G_e)$ as explained in the discussion before Thereom \ref{ThmKequivalence}. Since the hypothesis on the conditional expectations of this Theorem are obviously satisfied, it follows that, whenever $\widehat{G}_p$ is $K$-amenable for all $p$ (hence $\widehat{G}_e$ is also $K$-amenable for all $e$ as a quantum subgroup of $\widehat{G}_{s(e)}$), $\widehat{G}$ is $K$-amenable. The proof of the converse is obvious.
\end{proof}

\begin{remark}
The first assertion of the previous Corollary strengthens the results of \cite[Corollary 19]{Pi86} and also \cite{FF13, Fi13, Ve04} and unify all the proofs.
\end{remark}

\bigskip

\noindent
{\sc Emmanuel Germain} \\
  {LMNO, CNRS UMR 6139, Universit\'e de Caen, France\\
\em E-mail address: \tt emmanuel.germain@unicaen.fr}

\bigskip

\noindent
{\sc Pierre FIMA} \\ \nopagebreak
  {Univ Paris Diderot, Sorbonne Paris Cit\'e, IMJ-PRG, UMR 7586, F-75013, Paris, France \\
  Sorbonne Universit\'es, UPMC Paris 06, UMR 7586, IMJ-PRG, F-75005, Paris, France \\
  CNRS, UMR 7586, IMJ-PRG, F-75005, Paris, France \\
\em E-mail address: \tt pierre.fima@imj-prg.fr}

\end{document}